\documentclass[12pt,a4paper]{scrartcl}
\usepackage[T1]{fontenc}
\usepackage{mathpazo}
\usepackage[english]{babel}	
\usepackage{csquotes}

\usepackage{graphicx}
\usepackage{color}
\definecolor{darkgreen}{rgb}{0.25,0.75,0.25}   
\definecolor{lightgrey}{rgb}{0.83,0.85,0.83}   
\definecolor{palegrey}{rgb}{0.93,0.95,0.93}
\usepackage{colortbl}
\newcommand{\cw}[1]{\colorbox{white}{#1}}
\newcommand{\cl}[1]{\colorbox{lightgrey}{#1}}
\newcommand{\cp}[1]{\colorbox{palegrey}{#1}}

\usepackage[a4paper]{geometry}
\usepackage{fullpage}
\usepackage{verbatim}
\usepackage{hyperref}
\newcommand{\phj}{\phantom{j\hspace{-1mm}}}
\usepackage{algorithm}

\usepackage[all]{xy}
\usepackage{mathtools}
\usepackage{array}
\usepackage{nicefrac}
\usepackage{setspace}
\usepackage{relsize}
\usepackage{tabto}
\usepackage{centernot}

\usepackage{amsmath}
\usepackage{amsthm}
\usepackage{amssymb}
\usepackage[mathscr]{eucal}

\theoremstyle{definition}
\newtheorem{defi}{Definition}[section]
\newtheorem*{rema}{Remark}	
\newtheorem{exam}{Example}

\theoremstyle{plain}
\newtheorem{theo}[defi]{Theorem}
\newtheorem{prop}[defi]{Proposition}
\newtheorem{coro}[defi]{Corollary}
\newtheorem{lemm}[defi]{Lemma}

\numberwithin{equation}{section}

\usepackage{tikz} 
\newcommand\around[1]{
  \,\tikz[anchor=-4ex,baseline]\node[draw,shape=rectangle,rounded corners,scale=0.5] {#1} ;}
\newcommand\Around[1]{
  \,\tikz[anchor=-6ex,baseline]\node[draw,shape=rectangle,rounded corners,scale=0.5] {#1} ;}

\newcommand{\refs}[1]{$\displaystyle{\ref{#1}}$\,}
\def\lb{\linebreak}

\newcommand{\mM}{\mathbb{M}}
\newcommand{\mN}{\mathbb{N}}
\newcommand{\mZ}{\mathbb{Z}}
\newcommand{\mP}{\mathbb{P}}
\newcommand{\mR}{\mathbb{R}}

\newcommand{\mS}{\mathbb{S}}

\newcommand{\msG}{\mathscr{G}}
\newcommand{\msA}{\mathscr{A}}
\newcommand{\msM}{\mathscr{M}}
\newcommand{\msI}{\mathscr{I}}
\newcommand{\msQ}{\mathscr{Q}}
\newcommand{\msT}{\mathscr{T}}

\newcommand{\msGS}{\msG_\mS}
\newcommand{\msQS}{\msQ_\mS}
\newcommand{\msTS}{\msT_\mS}
\newcommand{\msGSO}{\msG_{\mS1}}
\newcommand{\msTSO}{\msT_{\mS0}}

\newcommand{\msGm}{\msG^{(m)}}
\newcommand{\msQm}{\msQ^{(m)}}
\newcommand{\msTm}{\msT^{(m)}}
\newcommand{\msTmO}{\msT^{(m)}_0}

\newcommand{\mdot}{\!\cdot\!}
\newcommand{\ldot}{\,\cdot\,}

\newcommand{\mdiv}{\!\mid\!}
\newcommand{\copr}{\!\perp\!}

\DeclareMathOperator*{\argmax}{\arg\max\ }
\DeclareMathOperator*{\argmin}{\arg\min\ }
\DeclareMathOperator{\e}{e}
\DeclareMathOperator{\nd}{d}
\DeclareMathOperator{\ld}{ld}

\DeclareMathOperator{\md}{mod} 
\DeclareMathOperator{\congr}{\equiv}
\newcommand{\modo}[1]{\md #1}
\newcommand{\modu}[1]{\ (\md #1)}
\newcommand{\seq}[1]{(#1_i)}

\newcommand{\dSf}[2]{d_{(#1,\,#2)}}
\newcommand{\dSfxy}[4]{\dSf #1 #2(#3,#4)}
\newcommand{\dnSf}[2]{d^{\around{n}}_{(#1,\,#2)}}
\newcommand{\dnSfxy}[4]{\dnSf #1 #2(#3,#4)}

\newcommand{\ts}{\text{{\tiny $\text{$\square$}$}}}
\newcommand{\rts}{\frac{1}{\ts}}

\newcommand{\fp}[1]{#1^{(m,+)}}
\newcommand{\ft}[1]{#1^{(m,\ldot)}}
\newcommand{\fr}[1]{#1^{(m,\rts)}}
\newcommand{\fxp}[2]{\fp #1\bigl(\seq #2\bigr)}
\newcommand{\fxt}[2]{\ft #1\bigl(\seq #2\bigr)}
\newcommand{\fxr}[2]{\fr #1\bigl(\seq #2\bigr)}
\newcommand{\fxcp}[2]{\fp #1\bigl(\as #2\bigr)}
\newcommand{\fxct}[2]{\ft #1\bigl(\as #2\bigr)}
\newcommand{\fxcr}[2]{\fr #1\bigl(\as #2\bigr)}
\newcommand{\fmap}[1]{#1^{(m,+)}_{MA}}
\newcommand{\fmat}[1]{#1^{(m,\ldot)}_{MA}}
\newcommand{\fmar}[1]{#1^{(m,\rts)}_{MA}}

\newcommand{\dSfp}[2]{d^{(m,+)}_{(#1,\,#2)}}
\newcommand{\dSft}[2]{d^{(m,\ldot)}_{(#1,\,#2)}}
\newcommand{\dSfr}[2]{d^{(m,\rts)}_{(#1,\,#2)}}
\newcommand{\dSfxyp}[4]{\dSfp #1 #2\bigl(\seq #3,\seq #4\bigr)}
\newcommand{\dSfxyt}[4]{\dSft #1 #2\bigl(\seq #3,\seq #4\bigr)}
\newcommand{\dSfxyr}[4]{\dSfr #1 #2\bigl(\seq #3,\seq #4\bigr)}

\newcommand{\dSfxycp}[4]{\dSfp #1 #2\bigl(\as #3,\as #4\bigr)}
\newcommand{\dSfxyct}[4]{\dSft #1 #2\bigl(\as #3,\as #4\bigr)}
\newcommand{\dSfxycr}[4]{\dSfr #1 #2\bigl(\as #3,\as #4\bigr)}

\newcommand{\fn}[1]{#1^{\around{n}}}
\newcommand{\fnp}[1]{#1^{\around{n}(m,+)}}
\newcommand{\fnt}[1]{#1^{\around{n}(m,\ldot)}}
\newcommand{\fnr}[1]{#1^{\around{n}(m,\rts)}}

\newcommand{\dnSfp}[2]{d^{\around{n}(m,+)}_{(#1,\,#2)}}
\newcommand{\dnSft}[2]{d^{\around{n}(m,\ldot)}_{(#1,\,#2)}}
\newcommand{\dnSfr}[2]{d^{\around{n}(m,\rts)}_{(#1,\,#2)}}

\newcommand{\dnSfxycp}[4]{\dnSfp #1 #2\bigl(\as #3,\as #4\bigr)}
\newcommand{\dnSfxyct}[4]{\dnSft #1 #2\bigl(\as #3,\as #4\bigr)}
\newcommand{\dnSfxycr}[4]{\dnSfr #1 #2\bigl(\as #3,\as #4\bigr)}

\newcommand{\fnxcp}[2]{\fnp #1\bigl(\as #2\bigr)}
\newcommand{\fnxct}[2]{\fnt #1\bigl(\as #2\bigr)}
\newcommand{\fnxcr}[2]{\fnr #1\bigl(\as #2\bigr)}
\newcommand{\fnmap}[1]{#1^{\around{n}(m,+)}_{MA}}
\newcommand{\fnmat}[1]{#1^{\around{n}(m,\ldot)}_{MA}}
\newcommand{\fnmar}[1]{#1^{\around{n}(m,\rts)}_{MA}}

\DeclareMathAlphabet{\mathpzc}{OT1}{pzc}{m}{it}
\newcommand{\dd}[2]{d^{\around{#1}(m,\mathcal{O}_#2)}_{(\mathpzc{f}_#2,\mathcal{S}_#2)}}
\newcommand{\ff}[2]{{\mathpzc{f}_#2}^{\around{#1}(m,\mathcal{O}_#2)}_{MA}}

\newcommand{\as}[1]{\langle#1\rangle}
\newcommand{\ap}[2]{\langle#1\rangle_{#2}}

\begin{document}

\title{Pseudometrics and preorders\\on sets of integer sequences\\induced by arithmetic functions}
\author{Mario Ziller}
\date{}

\maketitle

\begin{abstract}
Starting from pseudometrics and preorders on sets of integers,\lb we extend the focus to sets of finite sequences of integers, in particular\lb sequences of consecutive integers. We outline existing concepts for\lb deriving centred pseudometrics and preorders in a given pseudometric space and their application to $\mathbb{Z}$ and develop approaches to generalize\lb the ideas to $\mathbb{Z}^m$. Sequences of consecutive integers represent a special case here and are examined in more detail.

Another main topic is the use of arithmetic functions in this context.\lb The types of pseudometrics and preorders examined in this paper\lb can be induced by suitable arithmetic functions. We derive fundamental conclusions about relationships between functions and preorders, as well as about equivalent and potentially distinct types of preorders.
\end{abstract}

\ \\
\tableofcontents


\section*{Introduction}
\addcontentsline{toc}{section}{\phj Introduction\phj}

Henceforth, we denote the set of integral numbers by $\mZ$ and the set of natural\lb numbers, i.e. positive integers, by $\mN$. $\mN_0=\mN\cup\{0\}$. $\mP=\{p_i : i\in\mN\}$ is the set of prime numbers with $p_1=2$. 

We abbreviate the set of positive representatives of the residue classes $\modo n$\lb as
$\mZ_n=\{x\in\mN:x\le n\}=\{1,\dots,n\}\simeq\mZ/n\mZ$. 

The set of real numbers is denoted by $\mR$, and $\mR_{\ge0}=\{x\in\mR: x\ge0 \}$.
\ \\\\[-1ex]

In a previous article, we investigated how preorders and partitions of sets of\lb integers can be generated using arithmetic functions \cite{Ziller_2023}. The main focus of the\lb current paper is on sets of finite integer sequences, in particular on sequences of\lb consecutive integers. We will generalise and expand the existent concepts for this purpose. As an introduction, we recapitulate the most important ideas.
\ \\\\[-1ex]

Starting from the consideration of the French railway metric, a simplified form of a centred pseudometric, sometimes called the British Rail metric, was derived that can be transferred to any pseudometric space \cite{Deza_Deza_2009, Ziller_2023}. Let $\mS$ be a space and $d$ a pseudometric on $\mS$. Then for every point $o\in\mS$ and arbitrary but different $x,y\in\mS$, the distance function $d_o(x,y) = d(o,x)+d(o,y)$ results in a centred pseudometric on $\mS$. Its centre is $o$. By definition, $d_o(x,y) = 0$ must be set for $x=y$.

On the other hand, a centred pseudometric can be generated on any set $\mS$\lb if a distance function with corresponding properties exists. Such a function we call generating function.

\begin{defi} \label{generating_functions}
Given a set $\mS$ and an element $o\in\mS$, every function $f:\mS\to\mR_{\ge0}$ with $f(o)=0$ is called generating. The set of all generating functions on $\mS$ is denoted as
\[\msGS=\{f:\mS\to\mR_{\ge0}\text{ \ with \ }f(o)=0\}.\]
\end{defi}

More precisely, we should use $\msG(\mS,o)$ or something similar for the set of\lb generating functions. In this section, we always consider the arbitrary but fixed\lb set $\mS$ and the centre $o$. Therefore, we write $\msGS$ for short. This also applies analogously to the supplementary sets defined below.

In all cases where the reference to the basic set and the centre is given by the context, we will also introduce shortened notations. This will be the case in all upcoming sections, each focusing on a specific underlying set with a characteristic centre.

\begin{lemm} \label{function_metric}
Let $x,y\in\mS$ and $f\in\msGS$. Then 
\begin{align*}
d(x,y)=\begin{cases}
   0 &\text{for $x=y$ \quad and}\\
   f(x)+f(y) &\text{for $x\ne y$} \end{cases}
\end{align*}
is an $o$-centred pseudometric on $\mS$. The function $f$ is given by $f(x)=d(o,x)$.
\end{lemm}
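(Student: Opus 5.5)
We verify the pseudometric axioms for $d$ directly from the definition, splitting into cases according to which of the arguments coincide. Throughout we use only that $f$ takes values in $\mR_{\ge0}$ and that $f(o)=0$, as stipulated in Definition~\ref{generating_functions}.

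\textbf{Non-negativity, zero diagonal and symmetry.} The case distinction in the statement gives $d(x,x)=0$. For $x\ne y$ we have $d(x,y)=f(x)+f(y)\ge 0$, since $f\ge0$. Symmetry holds in both cases because addition is commutative and the condition $x=y$ is symmetric.

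\textbf{Triangle inequality.} We must show $d(x,z)\le d(x,y)+d(y,z)$ for all $x,y,z\in\mS$.
\begin{itemize}
\item If $x=z$, the left side is $0$ and the inequality is trivial by non-negativity.
\item If $x\ne z$ but $y=x$ (or $y=z$), the right side equals $0+d(x,z)$ (respectively $d(x,z)+0$), so equality holds.
\item If $x,y,z$ are pairwise distinct, the right side equals $f(x)+2f(y)+f(z)$. This is at least $f(x)+f(z)=d(x,z)$, because $f(y)\ge0$.
\end{itemize}
This is the only step with any content, and it rests entirely on the non-negativity of $f$.

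\textbf{Centre and the formula $f(x)=d(o,x)$.} For $x\ne o$ we get $d(o,x)=f(o)+f(x)=f(x)$, using $f(o)=0$. For $x=o$ we get $d(o,o)=0=f(o)$. Hence $f(x)=d(o,x)$ for every $x\in\mS$. Consequently, for distinct $x,y$,
\[
d(x,y)=d(o,x)+d(o,y),
\]
which is exactly the form $d_o$ of the British Rail type pseudometric recalled in the introduction. So $d$ is $o$-centred in that sense.

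Note that $d$ need not be a metric: two distinct points $x,y$ with $f(x)=f(y)=0$ satisfy $d(x,y)=0$. This is why only "pseudometric" is claimed.

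I do not expect any real obstacle. The one point requiring care is to treat the degenerate cases with coinciding points separately in the triangle inequality, since the formula $f(x)+f(y)$ does not apply on the diagonal.
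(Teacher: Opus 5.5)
Your proposal is correct and follows the paper's proof. In both, the triangle inequality reduces to $f(x)+2f(y)+f(z)\ge f(x)+f(z)$ by non-negativity of $f$, and $f(o)=0$ gives $d(o,x)=f(x)$. Your explicit handling of coinciding points and of $x=o$, which the paper's one-line computation skips, is a useful tightening rather than a different route.
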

\begin{proof}
Identity and symmetry of $d$ follows by definition. The triangle inequality is satisfied because $f$ is non-negative. Let $z\in\mS$. Then
\[ d(x,y)+d(y,z)=f(x)+f(y)+f(y)+f(z)\ge f(x)+f(z)=d(x,z). \]
Furthermore, $d(o,x)=f(o)+f(x)=f(x)$.
\end{proof}

There are other non-negative functions on $\mS$ that can be easily transformed to\lb generating functions. We would like to point out two sets of them in particular.

\begin{lemm} \label{generating_supplements}
Let $o,x\in\mS$. Furthermore, we define the sets
\begin{align*}
\msQS &= \{g:\mS\to\mR_{\ge0}\text{ \ with \ }(g-1)\in\msGS\} \quad\text{and} \\
\msTS &= \{h:\mS\to\mR_{\ge0}\text{ \ with \ }(1-h)\in\msGS\},
\end{align*}
and call them supplements to $\msGS$. Then
\begin{align*}
g\in\msQS &\qquad\Longrightarrow\hspace{16.5mm} g(x)\ge1 \quad\land\quad g(o)=1\text{\qquad and} \\
h\in\msTS &\qquad\Longrightarrow\qquad 0\le h(x)\le1 \quad\land\quad h(o)=1.
\end{align*}
\end{lemm}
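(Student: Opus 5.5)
The plan is to unfold the definition of $\msGS$ (Definition \ref{generating_functions}) for the shifted functions $g-1$ and $1-h$. Membership of a function $f$ in $\msGS$ means exactly two things: $f(x)\ge0$ for every $x\in\mS$, and $f(o)=0$. Every claimed property of $g$ and $h$ should fall out of one of these two conditions, together with the standing requirement that $g$ and $h$ map into $\mR_{\ge0}$.

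For $g\in\msQS$, I will set $f=g-1$, so $f\in\msGS$. Non-negativity of $f$ at an arbitrary $x\in\mS$ gives $g(x)-1\ge0$, that is, $g(x)\ge1$. The centring condition $f(o)=0$ gives $g(o)-1=0$, hence $g(o)=1$. This proves the first implication.

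For $h\in\msTS$, I will set $f=1-h\in\msGS$. Non-negativity gives $1-h(x)\ge0$, so $h(x)\le1$. The lower bound $h(x)\ge0$ is not a consequence of $f\in\msGS$; it comes directly from the codomain $\mR_{\ge0}$ in the definition of $\msTS$. Finally, $f(o)=0$ yields $h(o)=1$.

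There is no real obstacle here. The only point needing care is that the bound $0\le h(x)$ must be attributed to the codomain restriction in the definition of $\msTS$, not to generating-function membership. Symmetrically, the codomain restriction on $g$ is redundant, since $g(x)\ge1$ already holds. The whole proof is a direct rewriting of the definitions.
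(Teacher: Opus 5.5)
Your proposal is correct and matches the paper's proof: you unfold $(g-1)\in\msGS$ and $(1-h)\in\msGS$ into non-negativity plus vanishing at $o$, and you take the lower bound $h(x)\ge0$ from the codomain of $h$, just as the paper does. Nothing further is needed.
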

\begin{proof}
If $(g-1)\in\msGS$, then $g(x)-1\ge0$ or $g(x)\ge1$, and $g(o)-1=0$ or $g(o)=1$.
Analogously, if $(1-h)\in\msGS$ and $h(x)\ge0$, then $0\le h(x)\le1$, and $1-h(o)=0$ or $h(o)=1$. 
\end{proof}
\ 

Using functions of the sets $\msGS$ , $\msQS$, and $\msTS$, various pseudometrics can be constructed on $\mS$. We summarize the most important of them in the following proposition.

\begin{prop} \label{function_metric_other}
Let $x,y\in\mS$, $f\in\msGS$ , $g\in\msQS$, and $h\in\msTS$.

The following distance functions are $o$-centred pseudometrics on $\mS$ where $d_{\dots}(o,x)\in\msGS$.\lb In all cases, we set $d_{\dots}(x,y)=0$ \ for $x=y$.  For $x\ne y$, we define instead
\vspace*{-7mm}\\
\begin{align*}
\dSfxy f {{\msGS}} x y &= f(x) + f(y), \\
\dSfxy g {{\msQS}} x y &= g(x) + g(y) - 2, \text{ \quad and} \\
\dSfxy h {{\msTS}} x y &= 2 - h(x) - h(y).
\end{align*}
\end{prop}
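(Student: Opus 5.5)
The plan is to reduce all three cases to Lemma~\ref{function_metric}. For each of the three distance functions I identify a generating function in $\msGS$ such that, for $x\ne y$, the distance is exactly the sum of its values at $x$ and $y$. Lemma~\ref{function_metric} then gives identity, symmetry, the triangle inequality and the $o$-centredness at once, together with the identity $d(o,x)=$ (that generating function evaluated at $x$), which lies in $\msGS$.

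The first case, $\dSfxy f {{\msGS}} x y = f(x)+f(y)$, is literally the distance of Lemma~\ref{function_metric} with the given $f$, so nothing is to be done.

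For the second case I put $\tilde f = g-1$. By the definition of $\msQS$ in Lemma~\ref{generating_supplements}, $\tilde f\in\msGS$. For $x\ne y$,
\[
\tilde f(x)+\tilde f(y)=g(x)-1+g(y)-1=g(x)+g(y)-2=\dSfxy g {{\msQS}} x y .
\]
The distance is set to $0$ on the diagonal, so it coincides with the pseudometric that Lemma~\ref{function_metric} builds from $\tilde f$. Consequently $\dSf g {{\msQS}}(o,x)=g(x)-1\in\msGS$.

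The third case is handled the same way with $\hat f=1-h$, which lies in $\msGS$ by the definition of $\msTS$. For $x\ne y$ we get $\hat f(x)+\hat f(y)=2-h(x)-h(y)$, and hence $\dSf h {{\msTS}}(o,x)=1-h(x)\in\msGS$.

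There is no real obstacle here. The only point needing care is the convention on the diagonal: at $x=o$ the formula $f(o)+f(x)$ would not apply, but Lemma~\ref{function_metric} already covers it, since $d(o,o)=0=\tilde f(o)$. The same holds for $\hat f$.

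Non-negativity of $\tilde f$ and $\hat f$, which is what the triangle inequality actually uses, is exactly the content of Lemma~\ref{generating_supplements}. So I would cite that lemma rather than re-derive the bounds $g\ge1$ and $0\le h\le1$.
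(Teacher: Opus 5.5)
Your proposal is correct and follows essentially the same route as the paper. The paper's proof simply notes that $(g-1)\in\msGS$ and $(1-h)\in\msGS$ and lets the construction of Lemma~\ref{function_metric} do the work. You make that reduction explicit and also check the diagonal convention at $x=o$.
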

\begin{proof}
The assertions follow immediately from Lemma~\refs{generating_supplements} because $(g-1)\in\msGS$ and $(1-h)\in\msGS$.
\end{proof}
\ \\[-2ex]

Every pseudometric space $\mS$ can be preordered by comparing the distances to an arbitrary but fixed point $o\in\mS$. In the case of centred pseudometrics, the centre itself is the natural choice of such a point. 

\begin{lemm} \label{induced_preorder}
Let $x,y\in\mS$, and $d$ be a pseudometric on $\mS$. For each point $o\in\mS$, the relation $\preceq_d$ defined by
\[x\preceq_d y \iff d(o,x)\le d(o,y)\]
is a preorder on $\mS$.
\end{lemm}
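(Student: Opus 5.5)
The plan is to reduce the claim to the fact that the usual order $\le$ on $\mR$ is reflexive and transitive. Fix $o\in\mS$ and consider the map $\varphi:\mS\to\mR_{\ge0}$, $\varphi(x)=d(o,x)$. By definition, $x\preceq_d y$ holds exactly when $\varphi(x)\le\varphi(y)$. So $\preceq_d$ is the pullback of $\le$ along $\varphi$. The pullback of a preorder along any map is again a preorder, and that is all we need. Nothing about $d$ beyond its being real-valued is used. In particular, the triangle inequality and symmetry play no role, and the centre $o$ may be any point.

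The steps, in order, are as follows. First, reflexivity: for every $x\in\mS$ we have $d(o,x)\le d(o,x)$, hence $x\preceq_d x$. Second, transitivity: if $x\preceq_d y$ and $y\preceq_d z$, then $d(o,x)\le d(o,y)\le d(o,z)$. Transitivity of $\le$ on $\mR$ gives $d(o,x)\le d(o,z)$, i.e. $x\preceq_d z$.

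To close, I would remark why the relation is in general only a preorder and not a partial order. Antisymmetry can fail in two ways. Distinct points may lie at equal distance from $o$. Also, since $d$ is only a pseudometric, distinct points $x\ne o$ may even satisfy $d(o,x)=0$. For a centred pseudometric as in Lemma~\refs{function_metric}, we have $\varphi=f$ with $f\in\msGS$, so the induced preorder is simply the one obtained by comparing the values of the generating function.

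There is no genuine obstacle here. The only point requiring care is to state precisely that the preorder axioms are reflexivity and transitivity, and that totality comes for free from the totality of $\le$ on $\mR$.
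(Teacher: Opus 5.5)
Your proof is correct and is essentially the paper's argument: the paper also gets reflexivity and transitivity of $\preceq_d$ directly from those of $\le$ on $\mR_{\ge0}$, which is exactly your pullback along $x\mapsto d(o,x)$. Your side remarks on the failure of antisymmetry, on totality, and on the identification with comparing values of $f$ for centred pseudometrics are accurate but not needed for the statement.
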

\begin{proof}
The specified relation is reflexive and transitive because the ordinary \enquote{is less than or equal to} relation is reflexive and transitive on $\mR_{\ge0}$.
\end{proof}

According to Proposition~\refs{function_metric_other}, we can express the corresponding preorders in terms of generating functions.

\begin{coro} \label{induced_preorder_other}
Let $x,y\in\mS$, $f\in\msGS$ , $g\in\msQS$, and $h\in\msTS$.

The following relations are preorders on $\mS$:
\begin{align*}
x\preceq_{\dSf f {\msGS}} y &\iff f(x) \le f(y), \\
x\preceq_{\dSf g {\msQS}} y &\iff g(x) \le g(y), \text{ \quad and} \\
x\preceq_{\dSf h {\msTS}} y &\iff h(x) \ge h(y).
\end{align*}
\end{coro}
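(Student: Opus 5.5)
The plan is to combine Lemma~\refs{induced_preorder} with Proposition~\refs{function_metric_other}. That proposition gives us three $o$-centred pseudometrics $\dSf f {\msGS}$, $\dSf g {\msQS}$ and $\dSf h {\msTS}$ on $\mS$. Lemma~\refs{induced_preorder} applies to any pseudometric, so each of them induces a preorder $\preceq_d$ given by $x\preceq_d y \iff d(o,x)\le d(o,y)$. What remains is to compute the distance to the centre $d(o,x)$ explicitly for each of the three pseudometrics and to rewrite the defining inequality.

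For the computation I will distinguish the cases $x\ne o$ and $x=o$. If $x\ne o$, the formulas of Proposition~\refs{function_metric_other} together with $f(o)=0$, $g(o)=1$ and $h(o)=1$ (Lemma~\refs{generating_supplements}) give
\[
\dSf f {\msGS}(o,x)=f(x),\qquad \dSf g {\msQS}(o,x)=g(x)-1,\qquad \dSf h {\msTS}(o,x)=1-h(x).
\]
If $x=o$, each distance is $0$ by definition. The right-hand sides above also vanish at $x=o$, again by the values at $o$ from Lemma~\refs{generating_supplements}. So the three identities hold for all $x\in\mS$.

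Substituting these identities into $d(o,x)\le d(o,y)$ gives $f(x)\le f(y)$, then $g(x)-1\le g(y)-1$, i.e.\ $g(x)\le g(y)$, and finally $1-h(x)\le 1-h(y)$, i.e.\ $h(x)\ge h(y)$. These are exactly the three stated equivalences. Reflexivity and transitivity come directly from Lemma~\refs{induced_preorder}; alternatively, they are inherited from $\le$ and $\ge$ on $\mR$.

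The only point that needs care is the case $x=o$ (or $y=o$). There the pseudometric is defined to be $0$ rather than by the sum formula, and the values of $f$, $g$, $h$ at the centre are what make the two agree. There is no real obstacle beyond this.
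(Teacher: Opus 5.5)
Your proposal is correct and follows essentially the same route as the paper, which also derives the corollary from Lemma~\ref{induced_preorder} and Proposition~\ref{function_metric_other} by computing the distances to the centre, namely $g(x)-1$ and $1-h(x)$. Your explicit check of the case $x=o$ is a small point of care the paper leaves implicit. There the distance is $0$ by definition rather than given by the sum formula.
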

\begin{proof}
This corollary is a consequence of Lemma~\refs{induced_preorder} and Proposition~\refs{function_metric_other} because 
\begin{align*}
\dSfxy g {{\msQS}} o x &= g(o)+g(x) - 2 = g(x) - 1 \text{ \quad and} \\
\dSfxy h {{\msTS}} o x &= 2 - h(o)-h(x) = 1 - h(x). \qedhere
\end{align*}
\end{proof}

For functions $h\in\msTS$, it is also possible to create pseudometrics using multiplicative\lb combinations of function values \cite{Ziller_2023}. We will not pursue this option here as these pseudometrics are not $o$-centred and will not actually result in additional preorders.
\ \\[1ex]

There are special relationships between the three sets $\msGS$, $\msQS$, $\msTS$ and some of their subsets. Firstly, we observe $\msQS\cup\msTS=\{f:\mS\to\mR_{\ge0}\text{ \ with \ }f(o)=1\}$ and\lb $\msQS\cap\msTS=\{f:\mS\to\{1\}\}$. The set of generating functions $\msGS$ has also two\lb matching subsets with $f(x)\ge1$ or $f(x)\le1$ for $x\ge1$, respectively. Using this\lb constellation, we define specific subsets of $\msGS$ and $\msTS$. Furthermore, we examine\lb several bijective functions between pairs of these sets that turn out to form\lb preorder-preserving isomorphisms.

\begin{defi} \label{G_subsets}
Let $x\in\mS$. We define
\begin{align*}
\msGSO &= \{f\in\msGS:f(x)\le1\} \quad\text{and} \\
\msTSO &= \{h\in\msTS:h(x)>0\}.
\end{align*}
\end{defi}

By this definition, Definition~\refs{generating_functions}, and Lemma~\refs{generating_supplements}, we explicitly get
\begin{align*}
\msGSO &= \{f:\mS\to\mR_{\ge0}\text{ \ with \ }0\le f(x)\le1\text{ and }f(o)=0\}, \\
\msTS &= \{h:\mS\to\mR_{\ge0}\text{ \ with \ }0\le h(x)\le1\text{ and }h(o)=1\},\quad\text{and} \\
\msTSO &= \{h:\mS\to\mR_{\ge0}\text{ \ with \ }0<h(x)\le1\text{ and }h(o)=1\}.
\end{align*}
\ \\[-2ex]

To begin with, there are canonical isomorphisms due to the linear transformations between the set of generating functions and their supplements.

\begin{prop} \label{isomorphic_linear}
Let $x,y\in\mS$. Then
\[ \msGS\quad\simeq\quad\msQS \qquad\text{and}\qquad \msGSO\quad\simeq\quad\msTS. \]

For each $f\in\msGS$, there exist $g\in\msQS$ and for each $f_1\in\msGSO$, there exist $h\in\msTS$ and vice versa, such that the induced preorders are pairwise equivalent on $\mS$, i.e.
\[ x\preceq_{\dSf f \msGS} y \iff x\preceq_{\dSf g \msQS} y \qquad\text{and}\qquad x\preceq_{\dSf {f_1} \msGSO} y \iff x\preceq_{\dSf h \msTS} y.\]
\end{prop}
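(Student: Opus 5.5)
The plan is to write down the two bijections explicitly, check that each is well defined and invertible, and then read off the preorder equivalences from Corollary~\refs{induced_preorder_other}.

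\textbf{The first isomorphism.} Define $\varphi:\msGS\to\msQS$ by $\varphi(f)=f+1$, with candidate inverse $g\mapsto g-1$.
\begin{itemize}
\item $\varphi$ is well defined: for $f\in\msGS$, $g=f+1$ is non-negative and $(g-1)=f\in\msGS$, so $g\in\msQS$ by the definition in Lemma~\refs{generating_supplements}.
\item The inverse is well defined: for $g\in\msQS$, by definition $(g-1)\in\msGS$.
\item The two maps are mutually inverse, so $\varphi$ is a bijection and $\msGS\simeq\msQS$.
\end{itemize}

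\textbf{The second isomorphism.} Define $\psi:\msGSO\to\msTS$ by $\psi(f_1)=1-f_1$, with inverse $h\mapsto 1-h$.
\begin{itemize}
\item $\psi$ is well defined: for $f_1\in\msGSO$ we have $0\le f_1\le1$ and $f_1(o)=0$. Hence $h=1-f_1$ takes values in $[0,1]$, so $h\ge0$, and $(1-h)=f_1\in\msGS$. Therefore $h\in\msTS$.
\item The inverse is well defined: for $h\in\msTS$, Lemma~\refs{generating_supplements} gives $0\le h\le1$ and $h(o)=1$. Hence $1-h$ is non-negative, at most $1$, and vanishes at $o$, so $1-h\in\msGSO$ by the explicit description following Definition~\refs{G_subsets}.
\item The two maps are again mutually inverse.
\end{itemize}
The only point needing care is the bound $f_1\le1$. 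It is exactly what makes $1-f_1\ge0$, and this is why the second isomorphism is stated for the subset $\msGSO$ rather than all of $\msGS$.

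\textbf{Equivalence of the preorders.} Apply Corollary~\refs{induced_preorder_other}.
\begin{itemize}
\item With $g=f+1$:
\[x\preceq_{\dSf f \msGS} y\iff f(x)\le f(y)\iff f(x)+1\le f(y)+1\iff g(x)\le g(y)\iff x\preceq_{\dSf g \msQS} y.\]
\item With $h=1-f_1$, the corollary applies to $f_1$ because $\msGSO\subseteq\msGS$:
\[x\preceq_{\dSf {f_1} \msGSO} y\iff f_1(x)\le f_1(y)\iff 1-f_1(x)\ge 1-f_1(y)\iff h(x)\ge h(y)\iff x\preceq_{\dSf h \msTS} y.\]
\end{itemize}

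The \enquote{vice versa} direction follows by applying the inverse maps, since the chains of equivalences are symmetric.

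I expect no real obstacle here. The only step needing care is confirming the range conditions for $\psi$ and its inverse, namely non-negativity and the value at $o$.
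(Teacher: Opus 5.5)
Your proposal is correct and follows essentially the same route as the paper: the bijections $f\mapsto f+1$ and $f_1\mapsto 1-f_1$ are order-preserving and order-reversing respectively, and combining them with Corollary~\ref{induced_preorder_other} gives the preorder equivalences. The only difference is that you explicitly check that both maps and their inverses are well defined, in particular that $f_1\le 1$ is what guarantees $1-f_1\ge 0$, which the paper's short proof leaves implicit.
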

\begin{proof}
The transformations $f=g-1$ and $f_1=1-h$ are bijective and strictly\lb monotonic. Thus,  $g=f+1$ and $h=1-f_1$. The equivalence of the preorders\lb follows by Corollary~\refs{induced_preorder_other}.
\end{proof}

However, if we exclude functions from $\msTS$ that have a function value $0$, then we have another isomorphism between $\msGS$ and $\msQS$, and isomorphisms between both of them and the reduced set $\msTSO$. There are triples of functions $f\in\msGS$, $g\in\msQS$, and $h\in\msTSO$ based on exponential-like transformations such that suitable induced preorders are also equivalent.

\begin{prop} \label{function_triple}
Let $x,y\in\mS$. Then
\[ \msGS\quad\simeq\quad\msQS\quad\simeq\quad\msTSO. \]

For each $f\in\msGS$, there exist $g\in\msQS$ and $h\in\msTSO$ and vice versa, such that the induced preorders are pairwise equivalent on $\mS$, i.e.
\[ x\preceq_{\dSf f \msGS} y \iff x\preceq_{\dSf g \msQS} y \iff x\preceq_{\dSf h \msTSO} y.\]
\end{prop}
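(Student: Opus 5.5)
The approach is to write down explicit exponential-type bijections between the three sets and then read off the equivalence of the preorders from Corollary~\refs{induced_preorder_other}, exactly as in the proof of Proposition~\refs{isomorphic_linear}. Concretely, for $f\in\msGS$ I set
\[ g=\e^{f} \qquad\text{and}\qquad h=\e^{-f}, \]
with inverses $f=\ln g$ and $f=-\ln h$. Alternatively one can work with $h=1/g$, which makes the connection between $\msQS$ and $\msTSO$ a reciprocal.

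The first step is to check that these maps land in the right sets. If $f\ge0$ and $f(o)=0$, then $g=\e^{f}\ge1$ with $g(o)=1$. Hence $g-1\ge0$ and $(g-1)(o)=0$, so $(g-1)\in\msGS$ and $g\in\msQS$. Likewise $0<h=\e^{-f}\le1$ with $h(o)=1$, so $(1-h)\in\msGS$ and $h(x)>0$, giving $h\in\msTSO$.

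The second step is the converse. For $g\in\msQS$, Lemma~\refs{generating_supplements} gives $g\ge1$ and $g(o)=1$, so $\ln g\ge0$ with $\ln g(o)=0$, and therefore $\ln g\in\msGS$. For $h\in\msTSO$ we have $0<h\le1$ and $h(o)=1$, so $-\ln h$ is well defined, non-negative, and vanishes at $o$. This is the one point where excluding the value $0$ matters: it is why $\msTSO$ appears rather than $\msTS$, since $-\ln 0$ is undefined. So the maps are mutually inverse bijections $\msGS\to\msQS$ and $\msGS\to\msTSO$. Composing them gives $\msQS\simeq\msTSO$ via $h=1/g$.

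The third step is the preorders. Since $\exp$ is strictly increasing, $f(x)\le f(y)\iff g(x)\le g(y)$. Since $t\mapsto \e^{-t}$ is strictly decreasing, $f(x)\le f(y)\iff h(x)\ge h(y)$. By Corollary~\refs{induced_preorder_other}, these three conditions are exactly $x\preceq_{\dSf f \msGS} y$, $x\preceq_{\dSf g \msQS} y$ and $x\preceq_{\dSf h \msTSO} y$. The corollary applies to $h$ because $\msTSO\subseteq\msTS$.

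I expect no real obstacle here. The only care needed is verifying the boundary conditions at $o$ and the strict positivity of $h$, which is what guarantees that the logarithmic inverse exists.
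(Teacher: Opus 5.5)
Your proposal is correct and matches the paper's proof. The paper also sets $g=\e^{f}$ and $h=\e^{-f}$, with inverses $f=\ln g=-\ln h$ and $h=1/g$, and obtains the equivalence of the preorders from Corollary~\ref{induced_preorder_other} by strict monotonicity of the exponential. Your checks of the values at $o$ and of $h>0$ only spell out details the paper leaves implicit.
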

\begin{proof}
For every $f\in\msGS$, we set $g=\e^{\ f}\in\msQS$ and $h=\e^{-f}\in\msTSO$. Both transformations are bijective because $\e^x$ is strictly monotonic. Thus, for every $g\in\msQS$, $f=\ln(g)\in\msGS$ and $h=\frac 1 g\in\msTSO$, and for every $h\in\msTSO$, $f=-\ln(h)\in\msGS$ and $g=\frac 1 h\in\msQS$.

 The equivalence of the preorders follows by Corollary~\refs{induced_preorder_other}.
\end{proof}

Consequently, there exists a preorder-preserving homomorphism on $\msQS$ and another isomorphism between $\msQS$ and $\msTSO$  which complete the corresponding isomorphisms\lb of the last two propositions.

\begin{coro} \label{homomorphic}
Let $x,y\in\mS$ and $g\in\msQS$. Then $\msQS\simeq\msQS$. There exists a homomorphism $\digamma:\msQS\to\msQS$ such that
\[ x\preceq_{\dSf g \msQS} y \iff x\preceq_{\dSf {\digamma(g)} {\msQS}} y. \]

Furthermore, again $\msQS\simeq\msTSO$. For each $g\in\msQS$, there exists another $h\in\msTSO$ and vice versa, such that the induced preorders are pairwise equivalent on $\mS$, i.e.
\[ x\preceq_{\dSf g \msQS} y \iff x\preceq_{\dSf h \msTSO} y.\]
\end{coro}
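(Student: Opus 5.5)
The plan is to obtain both parts by composing the bijections already constructed in Propositions~\refs{isomorphic_linear} and \refs{function_triple}. Each of them preserves the induced preorder, so each composite will too.

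For the first part, start with $g\in\msQS$. Proposition~\refs{function_triple} gives $f=\ln(g)\in\msGS$. Proposition~\refs{isomorphic_linear}, applied in the direction $\msGS\to\msQS$, then gives $f+1\in\msQS$. So we set
\[ \digamma(g)=\ln(g)+1 .\]
Since $g\ge1$ and $g(o)=1$, we get $\digamma(g)\ge1$ and $\digamma(g)(o)=1$, hence $\digamma(g)\in\msQS$. The map $\digamma$ is bijective with inverse $g\mapsto \e^{\,g-1}$, because $\ln$ is a strictly monotonic bijection from $[1,\infty)$ onto $[0,\infty)$. The preorder equivalence follows from a chain of the two known equivalences:
\[ x\preceq_{\dSf g \msQS} y \iff x\preceq_{\dSf f \msGS} y \iff x\preceq_{\dSf {\digamma(g)} \msQS} y. \]
More directly, Corollary~\refs{induced_preorder_other} reduces the claim to $g(x)\le g(y)\iff \ln g(x)+1\le \ln g(y)+1$, which holds by strict monotonicity.

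For the second part, take $g\in\msQS$ and set $h=\e^{-(g-1)}=\e^{1-g}$. This is the composite of the linear map $g\mapsto g-1$ from Proposition~\refs{isomorphic_linear} with the map $f\mapsto \e^{-f}$ from Proposition~\refs{function_triple}. Because $g-1\ge0$ with equality at $o$, we have $0<h\le1$ and $h(o)=1$, so $h\in\msTSO$. The inverse is $g=1-\ln(h)$, so the map is bijective. Corollary~\refs{induced_preorder_other} reduces the preorder claim to $g(x)\le g(y)\iff h(x)\ge h(y)$, which holds because $t\mapsto\e^{1-t}$ is strictly decreasing.

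The only point needing care is interpretive rather than technical. This second isomorphism has to differ from the one in Proposition~\refs{function_triple}, which was $h=1/g$, and that is what the word \enquote{another} refers to. One should also note that $\digamma$ is not the identity, so it gives a genuinely nontrivial self-map of $\msQS$. Both points are checked by exhibiting a single $g$ with $1/g\ne \e^{1-g}$, for instance $g(x)=2$ at some $x\ne o$.
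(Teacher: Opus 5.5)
Your proof is correct and follows the paper's route: both parts come from composing the linear bijection of Proposition~\ref{isomorphic_linear} with the exponential bijections of Proposition~\ref{function_triple}, and your $h=\e^{1-g}$ is exactly the paper's choice. The only difference is that your $\digamma(g)=\ln(g)+1$ is the inverse of the paper's $\digamma(g)=\e^{\,g-1}$; the paper chooses its version so that $h=1/\digamma(g)$, making $(g-1,\digamma(g),h)$ a triple as in Proposition~\ref{function_triple}. This difference does not matter, since the inverse of a preorder-preserving bijection of $\msQS$ is again one.
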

\begin{proof}
We set $\digamma(g)=\e^{\ g-1}\in\msQS$, so $g=\ln(\digamma(g))+1$. Furthermore, $h=\e^{\ 1-g}\in\msTSO$. Then $h=\frac{1}{\digamma(g)}$ and $g=1-\ln(h)=\ln(\digamma(g))+1$. The morphisms are bijective and strictly monotonic.

The asserted equivalence of the preorders follows from the Propositions~\refs{isomorphic_linear} and \refs{function_triple}. Based on them,
\begin{align*}
x\preceq_{\dSf {g-1} \msGS} y \iff x\preceq_{\dSf g \msQS} y\, &\qquad\text{and} \\
x\preceq_{\dSf {\ln(\digamma(g))} \msGS} y \hspace{34mm} &\iff x\preceq_{\dSf {\digamma(g)} {\msQS}} y \iff x\preceq_{\dSf {\frac{1}{\digamma(g)}} \msTSO} y. \qedhere
\end{align*}
\end{proof}
\ \\

We depict the described isomorphisms between the set of generating functions\lb and its supplements or related subsets in the following diagram.

\begin{figure}[H]
  \centering\vspace*{-3mm}
\begin{gather*}\xymatrixcolsep{2pc}\xymatrixrowsep{4pc}\xymatrix{
	\msGSO
		\ar@{..}[rrrrr]|-{\ \ \subseteq\ \ }
		\ar@{<->}[dd]_-{\ref{isomorphic_linear}}
	    & 
		    & & & & \msGS
				\ar@{<->}[d]_-{\ref{function_triple}}
				\ar@<5pt>@{<->}@/^1.5pc/[dd]^-{\ref{function_triple}} \\
		& \msQS
			\ar@{<->}[rrrru]^-(.56){\ref{isomorphic_linear}\,}\
			\ar@{<->}[rrrr]_-(.52){\ref{homomorphic}}\
			\ar@{<->}[rrrrd]_-(.592){\ref{homomorphic}\,\,}\
		    & & & & \msQS
				\ar@{<->}[d]_-{\ref{function_triple}} \\
	\msTS
		\ar@{..}[rrrrr]|-{\ \ \supseteq\ \ }
		& 
		    & & & & \msTSO
}\end{gather*}
   \caption{Preorder-preserving isomorphisms between the sets of generating and\\ supplementary functions on $\mS$.} \label{dia1}
\end{figure}
\ \\[-2ex]

In the starting section, we apply the definitions given above to sets of integers and explain how arithmetic functions can be used to derive pseudometrics and preorders. The introduction of a modulus $n\in\mN$ and the transformation $x\mapsto\bigl(1+ (x-1) \modo n\bigr)$ can enable extending the use of arithmetic functions to all integers \cite{Ziller_2023}. \!Such\, an\lb approach particularly highlights common divisibility properties of $x$ and $n$.

We will describe all relationships as generally as possible and limit ourselves to the essentials. This is intended to facilitate later application to other contexts. However, additive and multiplicative arithmetic functions are of particular interest in number theory. They have been extensively studied. Many of them can be transformed into generating functions or corresponding supplements. We will examine in more detail\lb specific subsets of additive and multiplicative arithmetic functions that have been\lb designated as admissible \cite{Ziller_2023}.
\ \\

In the following Section~\refs{MO_Z_M}, we extend the scope to integer sequences of a given length $m\in\mN$. They will be denoted by $\seq x _{i=0}^{m-1}$ where $x_i\in\mZ$. If the length of a sequence is obvious, we write $(x_i)$ for short. Therefore,
\[ \mZ^m = \{\seq x \mdiv x_i\in\mZ,\ i=0,\dots,m-1\} \]
is the set of all integer sequences of length $m$, i.e. the set of $m$-tuples of integral\lb numbers. 

We pursue the question of how to create pseudometrics and preorders on this set\lb so that some divisibility information of its elements can be gathered and compared.\lb In particular, the three Pythagorean means, namely the arithmetic mean (AM), the\lb geometric mean (GM), and the harmonic mean (HM), have proven suitable for this purpose in connection with generating functions and their supplements. We present, in a general and comprehensive manner, possibilities for extending the ideas described in Section~\refs{MO_Z} to integer sequences.
\ \\\\[-1ex]

Subsequently, we apply the provided concepts to sequences of consecutive integers. The finite sequence of $m$ consecutive integers $\seq x _{i=0}^{m-1} = (x+i)_{i=0}^{m-1}$, where $x=x_0$,\lb is denoted by $\ap x m$ \cite{Ziller_2020}. The first member of $\ap x m$ is $x$. So, the sequence $\ap x m$\lb is completely determined by $x$. In cases where the length is obvious, we write $\as x$ for short.

The set of all sequences of consecutive integers $\as x$ of length $m$ is symbolised by
\[\ap \mZ m=\{\as x : x\in\mZ\}.\]
By definition, $\ap \mZ m$ is a special subset of $\mZ^m$, $\ap \mZ m \subset \mZ^m$. Furthermore, there exists a bijection $\ap x m \mapsto x$ between $\ap \mZ m$ and $\mZ$, which will be explained in detail. The use of Pythagorean means leads to moving averages along a sequence of consecutive integers. This again underscores the existence of the aforementioned bijection.

At the end of section \refs{SCI}, we examine in particular once again pseudometrics and\lb preorders on $\ap \mZ m$ induced by admissible arithmetic functions. The underlying\lb assumptions ultimately result in various pseudometrics and preorders. However, some specific groups of preorders turned out to be equivalent. This leads to at most five generally distinct preorders.
\ \\\\[-1ex]

In the concluding remarks, we will discuss some details regarding the actual\lb number of different preorders that exist for a given function. Furthermore, the topic of extreme values of the corresponding moving averages is addressed. Several examples illustrate the derived statements.

\ \\\\


\section{Pseudometrics and preorders on $\mZ$} \label{MO_Z}

This section is essentially the summary of some results of a previous paper \cite{Ziller_2023}. We apply the definitions given in the introduction to sets of integers. In particular, we expound how arithmetic functions can be used to derive pseudometrics and preorders. Since $1$ is the only natural number dividing all integers, it seems obvious to choose $1$ as the centre of the considered pseudometrics. This enables the investigation of corresponding preorders and the focus on divisibility properties.

Properties of pseudometrics and preorders on a set are transferred to every subset of it. We describe the context of general relationships between centred pseudometrics and preorders on $\mZ$ as a whole. However, the results apply to arbitrary subsets of integers $\mM\subseteq\mZ$, even if the centre is not included, i.e. $1\notin\mM$.
 
Following Definition~\refs{generating_functions} and Lemma~\refs{generating_supplements}, we define the sets of generating functions and its supplements for all integers. These are the basic sets of functions that will be applied throughout the paper. We regard these functions as the basis functions of our context. Therefore, we completely refrain from using indices for them because the context remains clear.

\begin{defi} \label{generating_functions_Z}
The set of all generating functions of $1$-\hspace{0.25mm}centred pseudometrics on $\mZ$ results in
\[ \msG=\{f:\mZ\to\mR_{\ge0}\text{ \ with \ }f(1)=0\}. \]

The related supplementary sets are
\[ \msQ=\{g:\mZ\to\mR_{\ge0}\text{ \ with \ }(g-1)\in\msG\} \quad\text{and}\quad \msT=\{h:\mZ\to\mR_{\ge0}\text{ \ with \ }(1-h)\in\msG\}. \]
\end{defi}

Using functions of these sets $\msG$, $\msQ$, and $\msT$, various $1$-\hspace{0.25mm}centred pseudometrics and\lb corresponding preorders can be constructed on $\mZ$.

\begin{coro} \label{induced_pseudometric_preorder_Z}
Let $x,y\in\mZ$, $f\in\msG$ , $g\in\msQ$, and $h\in\msT$. For $x\ne y$, we define
\begin{align*}
\dSfxy f \msG x y &= f(x) + f(y), \\
\dSfxy g \msQ x y &= g(x) + g(y) - 2, \text{ \quad and} \\
\dSfxy h \msT x y &= 2 - h(x) - h(y).
\end{align*}
In all cases, we set $d_{\dots}(x,y)=0$ \ for $x=y$. Then, these distance functions are $1$-\hspace{0.25mm}centred pseudometrics on $\mZ$. \\

The following relations are preorders on $\mZ$.
\vspace*{-7mm}\\
\begin{align*}
x\preceq_{\dSf f \msG} y &\iff f(x) \le f(y), \\
x\preceq_{\dSf g \msQ} y &\iff g(x) \le g(y), \text{ \quad and} \\
x\preceq_{\dSf h \msT} y &\iff h(x) \ge h(y).
\end{align*}
\end{coro}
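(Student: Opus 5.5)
This corollary is the specialisation of the general framework from the introduction to the set $\mS=\mZ$ with centre $o=1$. The plan is therefore not to argue anything new but to check that the hypotheses of the earlier results are met, and then read off the conclusions.

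First we observe that Definition~\refs{generating_functions_Z} is exactly Definition~\refs{generating_functions} and Lemma~\refs{generating_supplements} with this choice of $\mS$ and $o$. Hence $\msG=\msGS$, $\msQ=\msQS$ and $\msT=\msTS$ for $\mS=\mZ$, $o=1$. In particular, every $g\in\msQ$ satisfies $g\ge1$ and $g(1)=1$, and every $h\in\msT$ satisfies $0\le h\le1$ and $h(1)=1$.

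For the pseudometric part, we invoke Proposition~\refs{function_metric_other} directly. Its three distance functions, each set to $0$ on the diagonal, coincide with $\dSf f \msG$, $\dSf g \msQ$ and $\dSf h \msT$. So they are $1$-centred pseudometrics on $\mZ$. For transparency we may note the underlying mechanism. The second and third distances are the Lemma~\refs{function_metric} pseudometrics generated by the functions $g-1\in\msG$ and $1-h\in\msG$, because
\[ g(x)+g(y)-2=(g(x)-1)+(g(y)-1) \quad\text{and}\quad 2-h(x)-h(y)=(1-h(x))+(1-h(y)). \]
Consequently, identity, symmetry and the triangle inequality are inherited from Lemma~\refs{function_metric}.

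For the preorder part, we apply Lemma~\refs{induced_preorder} with $o=1$, or equivalently Corollary~\refs{induced_preorder_other}. The distances to the centre are computed using $f(1)=0$, $g(1)=1$ and $h(1)=1$:
\[ \dSfxy f \msG 1 x=f(x),\qquad \dSfxy g \msQ 1 x=g(x)-1,\qquad \dSfxy h \msT 1 x=1-h(x). \]
These formulas are valid for $x\ne1$. For $x=1$ both sides vanish, so they hold for all $x$. Comparing $d(1,x)\le d(1,y)$ then gives $f(x)\le f(y)$ and $g(x)\le g(y)$ directly, and $1-h(x)\le 1-h(y)$, which is equivalent to $h(x)\ge h(y)$.

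The only point needing care is this diagonal case $x=1$. The off-diagonal formula would give $d(1,1)=2f(1)$, which equals $0$ here, so the values agree. Apart from that, the proof is a direct citation of the earlier results.
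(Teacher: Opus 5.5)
Your proposal is correct and takes the paper's own route: the paper's proof simply cites Proposition~\ref{function_metric_other} and Corollary~\ref{induced_preorder_other}, specialised to $\mS=\mZ$ with centre $o=1$, exactly as you do. Your extra check of the diagonal case $x=1$ is correct but not strictly needed, since those results already cover it. There $d(1,1)=0$ agrees with $f(1)=0$, $g(1)-1=0$ and $1-h(1)=0$.
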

\begin{proof}
Confer Proposition~\refs{function_metric_other} and Corollary~\refs{induced_preorder_other}.
\end{proof}

If one defines $\msG_1 = \{f\in\msG:f(x)\le1\}$ and $\msT_0 = \{h\in\msT:h(x)>0\}$, there are compatible preorder-preserving isomorphisms as proven in Propositions~\refs{isomorphic_linear}, \refs{function_triple} and Corollary~\refs{homomorphic}. In what follows, we will limit ourselves to the exploration of isomorphisms based on exponential-like transformations, since they have a special relationship to arithmetic functions. According to Proposition~\refs{function_triple}, we conclude
\[ \msG\quad\simeq\quad\msQ\quad\simeq\quad\msT_0. \]
\ \\[-1ex]

Specific arithmetic functions can also be used as a starting point for creating\lb generating functions or their supplements. These, in turn, induce pseudometrics\lb and preorders on $\mZ$ as described above. There are several ways to modify suitable arithmetic functions such that their domains can be expanded to $\mZ$ \cite{Ziller_2023}. For this\lb purpose, we introduce an arbitrary but fixed modulus $n\in\mN>1$. Thus, we reveal\lb common divisibility properties of $n$ and the the function arguments. The number $1$ is divisible by no other natural number and the case $n=1$ can therefore be omitted.

The basic option to extend the domains of definition to $\mZ$, taking into account a modulus $n$, is to consider residue classes. To this end, we continue the projections of arithmetic functions onto the set $\mZ_n$ of positive representatives of the residue classes $\modo n$ to periodic functions. These, on the other hand, can be further continued to functions on to $\mZ$ as a whole. An arithmetic function $f$ is called periodic or $n$-periodic if $f(x+n)=f(x)$ for all $x\in\mN$ \cite{Schwarz_Spilker_1994}.

\begin{lemm} \label{AF_extend}
Let $n\in\mN>1$, $x\in\mZ$, and $f,g,h:\mN\to\mR_{\ge0}$ be non-negative arithmetic functions. We define
\[ \fn f(x)=f\bigl(1+ (x-1) \modo n\bigr). \]

If furthermore, $f(1)=0$, $g(1)=h(1)=1$, and $h(x)\le1\le g(x)$, then
\[ \fn f\in\msG, \quad\fn g\in\msQ, \text{\quad and \quad} \fn h\in\msT. \]
\end{lemm}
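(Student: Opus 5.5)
The plan is to check directly that each extended function meets the defining conditions of Definition~\refs{generating_functions_Z}: it must be a map $\mZ\to\mR_{\ge0}$, and it must take the right value at the centre $1$. No earlier result beyond that definition (and, for the supplements, Lemma~\refs{generating_supplements}) is needed.

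\emph{Step 1 (well-definedness).} For $x\in\mZ$ take $(x-1)\modo n$ to be the least non-negative residue, so it lies in $\{0,\dots,n-1\}$. Then $1+(x-1)\modo n\in\{1,\dots,n\}=\mZ_n\subset\mN$. Hence $\fn f(x)$ is a value of the arithmetic function $f$ at a natural number, and $\fn f:\mZ\to\mR_{\ge0}$ is a well-defined non-negative function. The same argument applies to $\fn g$ and $\fn h$.

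\emph{Step 2 (value at the centre).} For $x=1$ we get $(1-1)\modo n=0$, so $\fn f(1)=f(1)=0$, and likewise $\fn g(1)=g(1)=1$ and $\fn h(1)=h(1)=1$. Thus $\fn f\in\msG$ already.

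\emph{Step 3 (supplements).} We need $\fn g-1\in\msG$ and $1-\fn h\in\msG$. Both take the value $0$ at $1$ by Step~2. For non-negativity, every argument $1+(x-1)\modo n$ lies in $\mN$. So the hypothesis $h(\cdot)\le1\le g(\cdot)$, read as holding on all of $\mN$ (or at least on $\mZ_n$), gives $\fn g(x)-1\ge0$ and $1-\fn h(x)\ge0$ for every $x\in\mZ$. We also already know $\fn g,\fn h\ge0$. Therefore $\fn g\in\msQ$ and $\fn h\in\msT$.

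The only point needing care is the convention for $\modo n$ on negative integers. One must fix the least non-negative residue so that the argument always lands in $\mZ_n\subset\mN$, where the hypotheses on $f,g,h$ apply. Everything else is immediate. It is also worth remarking, though not needed for the proof, that the extended functions are $n$-periodic on $\mZ$.
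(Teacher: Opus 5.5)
Your proof is correct and follows the same route as the paper: the map $x\mapsto 1+(x-1)\modo n$ sends $\mZ$ into $\mZ_n\subset\mN$ and fixes $1$, so the values at the centre carry over and the co-domain conditions are inherited from the hypotheses on $f,g,h$. Your explicit choice of the least non-negative residue and your separate check that $\fn g-1$ and $1-\fn h$ are non-negative simply spell out what the paper dismisses as holding \enquote{by definition}.
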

\begin{proof}
The transformation $x\mapsto\bigl(1+ (x-1) \modo n\bigr)$ maps $\mZ$ to $\mZ_n\subset\mN$ preserving the number $1$. Thus, $\fn f(1)=f(1)=0$ and $\fn g(1)=g(1)=\fn h(1)=h(1)=1$. The corresponding co-domains meet the requirements by definition.
\end{proof}
\ \\[-3ex]

Additive and multiplicative arithmetic functions are of particular interest in number theory. They have been extensively studied. Many of them can be transformed into generating functions or corresponding supplements. We will pay special attention to the following subsets of arithmetic functions which were called admissible \cite{Ziller_2023}.
\pagebreak

\begin{defi} \label{admissible}
Let $x\in\mN>1$. We define the following subsets of arithmetic functions and call its elements admissible.
\begin{align*}
\msA\,\ \ &\;:\ \!\text{ admissible additive arithmetic functions.} \\
\msA\,\ \ &=\{ f:\text{$f$ is an additive arithmetic function and }f(x)\ge0\}. \\[1.3ex]
\msM\ \ &\;:\ \!\text{ admissible multiplicative arithmetic functions, bounded below.} \\
\msM\ \ &=\{ g:\text{$g$ is a multiplicative arithmetic function and }g(x)\ge1\}. \\[1.3ex]
\msI\ \ \ &\;:\ \!\text{ admissible multiplicative arithmetic functions, bounded by a finite interval.} \\
\msI\ \ \ &=\{ h:\text{$h$ is a multiplicative arithmetic function and }0\le h(x)\le1\}. \\
\msI_0\ \,&=\{h\in\msI:0<h(x)\le1\}.		
\end{align*} 
\end{defi}

The sets $\msM$ and $\msI$ are not disjoint, similar to the general case of the supplements to the set of generating functions described in the introduction. We analogously get $\msM\cap\msI=\{f:\mN\to\{1\}\}$. The union of $\msM$ and $\msI$ includes all multiplicative arithmetic functions bounded below by $0$.

\begin{coro} \label{admissible_extend}
Let $n\in\mN>1$, $f\in\msA$, $g\in\msM$, $h\in\msI$, and $h_0\in\msI_0$. Then
\[ \fn f\in\msG, \quad\fn g\in\msQ, \quad \fn h\in\msT, \text{\quad and \quad} \fn h_0\in\msT_0. \]
\end{coro}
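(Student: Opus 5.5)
The plan is to reduce everything to Lemma~\refs{AF_extend}. For each admissible function we check the hypotheses required there: the value at $1$ and the correct bounds on $\mN$. One extra step then handles $\msT_0$.

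First, the values at $1$ come from the general structure of additive and multiplicative functions. For additive $f$, coprimality of $1$ with itself gives $f(1)=f(1\cdot1)=f(1)+f(1)$, so $f(1)=0$. For multiplicative $g$ and $h$ we use the standard convention that multiplicative functions are not identically zero. Choosing $x$ with $g(x)\neq0$, the relation $g(x)=g(x\cdot 1)=g(x)g(1)$ gives $g(1)=1$, and likewise $h(1)=1$ and $h_0(1)=1$.

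Next come the bounds. Definition~\refs{admissible} imposes $f(x)\ge0$, $g(x)\ge1$ and $0\le h(x)\le1$ only for $x>1$. Together with the values at $1$ just computed, these give on all of $\mN$:
\[
f\ge0,\qquad g\ge1,\qquad 0\le h\le1 .
\]
So $f$, $g$ and $h$ are non-negative arithmetic functions with $f(1)=0$, $g(1)=h(1)=1$ and $h\le1\le g$. Lemma~\refs{AF_extend} now yields $\fn f\in\msG$, $\fn g\in\msQ$ and $\fn h\in\msT$. Since $\msI_0\subseteq\msI$, the same lemma also gives $\fn h_0\in\msT$.

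It remains to show that $\fn h_0$ is strictly positive. The map $x\mapsto 1+(x-1)\modo n$ sends $\mZ$ into $\mZ_n\subset\mN$, so every value of $\fn h_0$ is a value of $h_0$ on $\mN$. These values are positive: at arguments greater than $1$ by the definition of $\msI_0$, and at $1$ because $h_0(1)=1$. Hence $\fn h_0(x)>0$ for all $x\in\mZ$, which gives $\fn h_0\in\msT_0$.

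The only delicate point is the value of the function at $1$, which the definitions leave implicit; it is settled by the functional equations as described above. Everything else is direct bookkeeping through Lemma~\refs{AF_extend}.
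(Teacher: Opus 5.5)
Your proof is correct and follows the paper's route: the paper's proof is the one-line remark that $f$, $g$, and $h$ satisfy the hypotheses of Lemma~\ref{AF_extend}. You also make explicit what the paper leaves implicit, namely that $f(1)=0$ and $g(1)=h(1)=1$ follow from the functional equations, and that $\fn h_0\in\msT_0$ needs the extra observation that every value of $\fn h_0$ is a positive value of $h_0$ on $\mZ_n$.
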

\begin{proof}
The functions $f$, $g$, and $h$ satisfy the assumptions of Lemma~\refs{AF_extend}.
\end{proof}

Analogous to Corollary~\refs{induced_pseudometric_preorder_Z}, we use the following notation for the corresponding pseudometrics and preorders on $\mZ$ that refer to $f\in\msA$, $g\in\msM$, and $h\in\msI$.
\begin{align*}
\dnSfxy f \msA x y &= f(x) + f(y), \\
\dnSfxy g \msM x y &= g(x) + g(y) - 2, \text{ \quad and} \\
\dnSfxy h \msI x y &= 2 - h(x) - h(y)
\end{align*}
for $\seq x\ne\seq y$. The corresponding preorders are
\begin{align*}
x\preceq_{\dnSf f \msA} y &\iff f(x) \le f(y), \\
x\preceq_{\dnSf g \msM} y &\iff g(x) \le g(y), \text{ \quad and} \\
x\preceq_{\dnSf h \msI} y &\iff h(x) \ge h(y).
\end{align*}
\ \\[1ex]

If we transfer the isomorphism between the sets $\msG$, $\msQ$ and $\msT_0$ analogously, we get a corresponding result for sets of admissible arithmetic functions.
\pagebreak

\begin{prop} \label{function_triple_Z}
Let $n\in\mN>1$, $x,y\in\mZ$, $f\in\msA$, $g\in\msM$, and $h\in\msI_0$. Then
\ \\[-1ex]
\[ \msA\quad\simeq\quad\msM\quad\simeq\quad\msI_0. \]
\ \\[-2ex]
For each $f\in\msA$, there exist $g\in\msM$ and $h\in\msI_0$ and vice versa, such that the following preorders are pairwise equivalent on $\mZ$, i.e.
\ \\[-1ex]
\[ x\preceq_{\dnSf f \msA} y \iff x\preceq_{\dnSf g \msM} y \iff x\preceq_{\dnSf h {\msI_0}}y. \]
\end{prop}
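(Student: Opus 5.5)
The plan is to mirror the proof of Proposition~\refs{function_triple}, but to carry it out on the level of arithmetic functions instead of on $\mS$. The one additional thing to check is that the exponential-like transformations preserve additivity and multiplicativity. For $f\in\msA$ we set $g=\e^{\,f}$ and $h=\e^{-f}$, and we verify three points.

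\emph{Structure is preserved.} For coprime $a,b\in\mN$ we have $f(ab)=f(a)+f(b)$. Hence $g(ab)=\e^{f(a)+f(b)}=g(a)g(b)$, and likewise $h(ab)=h(a)h(b)$. Moreover $f(1)=0$ gives $g(1)=h(1)=1$, so $g$ and $h$ are multiplicative.

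\emph{The bounds of Definition~\refs{admissible} hold.} Since $f(x)\ge0$, we get $g(x)\ge1$ and $0<h(x)\le1$. Thus $g\in\msM$ and $h\in\msI_0$.

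\emph{The maps are bijections.} Conversely, for $g\in\msM$ put $f=\ln g$. This is additive because $\ln$ turns products into sums, and $f\ge0$ because $g\ge1$. For $h\in\msI_0$ put $f=-\ln h$. This is well defined precisely because $h>0$, which is why $\msI_0$ rather than $\msI$ appears in the statement, and it is additive and non-negative. Also $h=1/g$ and $g=1/h$. Since these maps are mutually inverse, we obtain $\msA\simeq\msM\simeq\msI_0$.

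For the preorders, we pass to the extended functions of Lemma~\refs{AF_extend}. The reduction $x\mapsto 1+(x-1)\modo n$ is applied to the argument before the function, so it commutes with composition by $\exp$ and $\ln$. Hence $\fn g=\e^{\fn f}$ and $\fn h=\e^{-\fn f}$ on all of $\mZ$. By Corollary~\refs{admissible_extend} these functions lie in $\msG$, $\msQ$ and $\msT_0$, respectively. Strict monotonicity of $\exp$ then gives, for all $x,y\in\mZ$,
\[ \fn f(x)\le\fn f(y)\iff\fn g(x)\le\fn g(y)\iff\fn h(x)\ge\fn h(y). \]
By the description of the preorders following Corollary~\refs{admissible_extend}, which in turn rests on Corollary~\refs{induced_pseudometric_preorder_Z}, this chain is exactly the asserted equivalence of the three preorders. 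Alternatively, one can invoke Proposition~\refs{function_triple} directly with $\mS=\mZ$ and $o=1$.

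The only step that needs some care is the verification that the transformations stay inside the admissible classes. This means multiplicativity and additivity on coprime arguments, together with the normalisation at $1$; the inverse direction works only because $h>0$. The preorder part is then a routine consequence of monotonicity.
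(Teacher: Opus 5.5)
Your proposal is correct and follows essentially the same route as the paper's proof: set $g=\e^{\,f}$ and $h=\e^{-f}$, note that $\fn g=\e^{\fn f}$ and $\fn h=\e^{-\fn f}$ because the reduction modulo $n$ acts on the argument, and conclude from the strict monotonicity of the exponential together with Corollaries~\ref{induced_pseudometric_preorder_Z} and~\ref{admissible_extend}. Your explicit checks that these maps preserve additivity and multiplicativity, respect the admissible bounds, and are invertible (which needs $h>0$) fill in what the paper states in one phrase as ``bijective transformations on $\mN$''.
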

\begin{proof}
For every $f\in\msA$, \ $g=\e^{\ f}\in\msM$ and $h=\e^{-f}\in\msI_0$ are bijective transformations\lb\\[-2ex]
on $\mN$. Furthermore , we get
\ \\[-3ex]
\begin{align*}
\fn g(x) = g\bigl(1+ (x-1) \modo n\bigr) &= \e^{\ f\bigl(1+ (x-1) \modo n\bigr)} = \e^{\ \fn f(x)}, \qquad \text{and} \\
\fn h(x) = h\bigl(1+ (x-1) \modo n\bigr) &= \e^{-f\bigl(1+ (x-1) \modo n\bigr)} = \e^{-\fn f(x)}.
\end{align*}
The equivalence of the corresponding preorders follow by Corollaries~\refs{induced_pseudometric_preorder_Z} and \refs{admissible_extend}, since the exponential function is strictly monotonic.
\end{proof}
\ \\[-2ex]

In the following diagram, we depict the functions under consideration and their relationships to pseudometrics and associated preorders, as proven in the previous proposition. This turns out to be a special application of Proposition~\refs{function_triple} to $\mZ$ using admissible arithmetic functions.

\begin{figure}[H]
  \centering\vspace*{-3mm}
\begin{gather*}\xymatrixcolsep{2pc}\xymatrixrowsep{1pc}\xymatrix{
  f\in\msA
  	\ar[rr]^-(.58){\ref{induced_pseudometric_preorder_Z}}
	\ar@{<->}[d]_-{\ref{function_triple_Z}}
	& & \dnSf f \msA
  	    \ar[rr]^-{\ref{induced_pseudometric_preorder_Z}}
        & & \preceq_{\dnSf f \msA}
			& \ar@<-25pt>@{<->}@/^/[d]^-{\ref{function_triple_Z}} \\
  g=\e^{\ f}\in\msM
  	\ar[rr]^-(.47){\ref{induced_pseudometric_preorder_Z}}
  	\ar@{<->}[d]_-{\ref{function_triple_Z}}
  	& & \dnSf g \msM
  	    \ar[rr]^-{\ref{induced_pseudometric_preorder_Z}}
		& & \preceq_{\dnSf g \msM}
			& \ar@<-25pt>@{<->}@/^/[d]^-{\ref{function_triple_Z}} \\
  h=\e^{-f}\in\msI_0
  	\ar[rr]^-(.455){\ref{induced_pseudometric_preorder_Z}}
  	& & \dnSf h {\msI_0}
  	    \ar[rr]^-{\ref{induced_pseudometric_preorder_Z}}
		& & \preceq_{\dnSf h {\msI_0}}
			& \\
  h\in\msI\setminus\msI_0
  	\ar[rr]^-(.567){\ref{induced_pseudometric_preorder_Z}}
  	& & \dnSf h {\msI\setminus\msI_0}
  	    \ar[rr]^-{\ref{induced_pseudometric_preorder_Z}}
		& & \preceq_{\dnSf h {\msI\setminus\msI_0}}
			& \\
}\end{gather*}
\ \\[-2ex]
   \caption{Relationships between admissible arithmetic functions, induced\\ pseudometrics, and corresponding preorders on $\mZ$.} \label{dia2}
\end{figure}
\

We draw attention to the fact that functions $\fn f$ defined in Lemma~\refs{AF_extend} can be\lb neither additive nor multiplicative even if $f$ is an additive or a multiplicative\lb arithmetic function. This also applies
to the special case of n-even functions for which $f(x)=f\bigl(\gcd(x,n)\bigr)$ for all $x\in\mN$ \cite{McCarthy_1986, Schwarz_Spilker_1994}.

However, the additive or multiplicative property of an arithmetic function $f$ is transferred to $\fn f$ on the restricted domain $\mZ_n$, and further on all respective residue classes.

\begin{lemm} \label{add_mult}
Let  $n\in\mN>1$, $x,y\in\mZ$, and $f,k:\mN\to\mR_{\ge0}$.\\
If furthermore, $f$ is additive, $k$ is multiplicative, and\\
$\tilde{x}=\bigl(1+ (x-1) \modo n\bigr)\perp\tilde{y}=\bigl(1+ (y-1) \modo n\bigr)$ \quad with \quad $\tilde{x}\cdot\tilde{y}\in\mZ_n$,  \quad then
\ \\[-1ex]
\[ \fn f(x\cdot y) = \fn f(x) + \fn f(y) \text{\quad and\quad}\fn k(x\cdot y) = \fn k(x) \cdot \fn k(y). \]
\end{lemm}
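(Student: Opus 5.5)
The plan is to compute the residue representative of the product $x\cdot y$ directly and then apply additivity or multiplicativity of $f$ and $k$ to the coprime representatives $\tilde{x},\tilde{y}$. Write $\rho(z)=1+(z-1)\modo n$, so that $\fn f(z)=f(\rho(z))$ and $\fn k(z)=k(\rho(z))$ by the definition in Lemma~\refs{AF_extend}. The key observation is that $\rho(z)$ is the unique element of $\mZ_n$ with $\rho(z)\equiv z\modu n$.

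\textbf{Step 1: show $\rho(x\cdot y)=\tilde{x}\cdot\tilde{y}$.} Since $\tilde{x}\equiv x$ and $\tilde{y}\equiv y\modu n$, we have $\tilde{x}\cdot\tilde{y}\equiv x\cdot y\modu n$. By hypothesis $\tilde{x}\cdot\tilde{y}\in\mZ_n$. Uniqueness of the representative in $\mZ_n=\{1,\dots,n\}$ then forces $\rho(x\cdot y)=\tilde{x}\cdot\tilde{y}$.

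\textbf{Step 2: apply the arithmetic properties.} Because $\tilde{x},\tilde{y}\in\mN$ and $\tilde{x}\perp\tilde{y}$, additivity of $f$ gives
\[ \fn f(x\cdot y)=f(\tilde{x}\cdot\tilde{y})=f(\tilde{x})+f(\tilde{y})=\fn f(x)+\fn f(y). \]
Multiplicativity of $k$ gives the analogous identity $\fn k(x\cdot y)=k(\tilde{x})\,k(\tilde{y})=\fn k(x)\cdot\fn k(y)$.

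The only point needing care is Step~1, namely the well-definedness of $\rho$ for negative arguments and the uniqueness claim. I would justify it by noting that $(z-1)\modo n\in\{0,\dots,n-1\}$ is the least nonnegative residue for every $z\in\mZ$, as is already implicitly used in the proof of Lemma~\refs{AF_extend}. Everything else is immediate from the definitions.
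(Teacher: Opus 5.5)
Your proof is correct and matches the paper's argument: identify the representative of $x\cdot y$ with $\tilde{x}\cdot\tilde{y}$, then apply additivity of $f$ and multiplicativity of $k$ to the coprime pair $\tilde{x},\tilde{y}$. Your Step~1 (the congruence $\tilde{x}\cdot\tilde{y}\equiv x\cdot y$ modulo $n$, plus uniqueness of the representative in $\mathbb{Z}_n$) makes explicit the equality $1+(x\cdot y-1)\bmod n=\tilde{x}\cdot\tilde{y}$, which the paper uses without comment; it is also exactly where the hypothesis $\tilde{x}\cdot\tilde{y}\in\mathbb{Z}_n$ enters.
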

\begin{proof}
\ \vspace*{-14.4mm}\\
\begin{align*}
\fn f(x\cdot y) &= f\bigl(1+ (x\cdot y-1) \modo n\bigr) = f(\tilde{x}\cdot\tilde{y}) \\
&= f(\tilde{x}) + f(\tilde{y}) = \fn f(x) + \fn f(y) \text{ \qquad and}\\
\fn k(x\cdot y) &= k\bigl(1+ (x\cdot y-1) \modo n\bigr) = k(\tilde{x}\cdot\tilde{y}) \\
&= k(\tilde{x}) \cdot k(\tilde{y}) = \fn k(x) \cdot \fn k(y). \qedhere
\end{align*}
\end{proof}

In other cases, additive and multiplicative properties may be violated. We give some examples.

\begin{exam}
We set $x=4$, $y=5$, and $n=13$. Then $x\copr y$.

\ \\[-1ex]
$f(x)=\Omega(x) =\Bigl|\bigl\{z=p^k : p\in\mP \land k\in\mN \land z\mdiv x\bigr\}\Bigr|$.

This function represents the number of different prime powers dividing n. The\lb\\[-5.25ex]

number of prime divisors (with repetition) is a totally additive function, $\Omega(x)\in\msA$.\\
Therefore, $f(4\cdot5)=f(4)+f(5)=2+1=3$.

$\fn f$ is not additive because

$\fn f(4\cdot5)=\fn f(20)=f(7)=1 \quad \ne \quad \fn f(4)+\fn f(5)=f(4)+f(5)=2+1=3$.

By the way, it is also not multiplicative:

$\fn f(4\cdot5)=\fn f(20)=f(7)=1 \quad \ne \quad \fn f(4)\cdot\fn f(5)=f(4)\cdot f(5)=2\cdot1=2$.

\ \\[-1ex]
$g(x)=\nd(x) = |\{d\in\mN : d\mdiv x\}|=\sum_{d\mid x} 1$.

The number of divisors is a multiplicative function, $\nd(x)\in\msM$.\\
Therefore, $g(4\cdot5)=g(4)\cdot g(5)=3\cdot2=6$.

$\fn g$ is not multiplicative because

$\fn g(4\cdot5)=\fn g(20)=g(7)=2 \quad \ne \quad \fn g(4)\cdot\fn g(5)=g(4)\cdot g(5)=3\cdot2=6$.

It is also not additive:

$\fn g(4\cdot5)=\fn g(20)=g(7)=2 \quad \ne \quad \fn g(4)+\fn g(5)=g(4)+g(5)=3+2=5$.

\ \\[-1ex]
$h(x)=\frac{1}{x}$.

The reciprocal function is totally multiplicative, $\frac{1}{x}\in\msI$.\\
Therefore, $h(4\cdot5)=h(4)\cdot h(5)=\frac{1}{4}\cdot\frac{1}{5}=\frac{1}{20}$.

$\fn h$ is not multiplicative because

$\fn h(4\cdot5)=\fn h(20)=h(7)=\frac{1}{7} \quad \ne \quad \fn h(4)\cdot\fn h(5)=h(4)\cdot h(5)=\frac{1}{4}\cdot\frac{1}{5}=\frac{1}{20}$.

It is also not additive:

$\fn h(4\cdot5)=\fn h(20)=h(7)=\frac{1}{7} \quad \ne \quad \fn h(4)+\fn h(5)=h(4)+h(5)=\frac{1}{4}+\frac{1}{5}=\frac{9}{20}$.
\end{exam}
\ \\[-3ex]

Finally, we point to a special subset of the considered generating functions $\fn f\in\msG$.\lb Let $n\in\mN>1$,  $x,y\in\mZ$, and $f:\mN\to\mR_{\ge0}$. It was previously proven that\lb
$d(x,y) = f\bigl(\gcd(x,n)\bigr)+f\bigl(\gcd(y,n)\bigr)$
for $x\ne y$, zero otherwise, is a pseudometric on $\mZ$ \cite{Ziller_2023}. 

The study of even functions $f(x)=f\bigl(\gcd(x,n)\bigr)$ is an interesting way to examine\lb common divisibility properties of $x$ and $n$. These functions are included in the\lb considered more general periodic functions $\fn f(x)=f\bigl(1+ (x-1) \modo n\bigr)$, which have been defined in Lemma~\refs{AF_extend}.

A function $\fn f(x)$ can have at most $n$ different function values for $x\in\mN\le n$. However, $f\bigl(\gcd(x,n)\bigr)$ can only have at most $\nd(n)\le n$ different values. If $x$ and $y$ have the same greatest common divisor with $n$, then the relating function values must be the same.

\begin{lemm} \label{mod_gcd}
Let  $n\in\mN>1$, $x,y\in\mZ$, and $f:\mN\to\mR_{\ge0}$.\\
If furthermore, $f(x)=f(y)$ for all $x,y\in\mN$ with $\gcd(x,n)=\gcd(y,n)$, then
\[ \fn f(x) = f\bigl(1+ (x-1) \modo n\bigr) = f\bigl(\gcd(x,n)\bigr) \quad\text{ for all } x\in\mZ. \]
\end{lemm}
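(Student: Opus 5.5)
Set $\tilde{x}=1+(x-1)\modo n$. By the definition in Lemma~\refs{AF_extend}, $\fn f(x)=f(\tilde{x})$, so the first equality holds by definition. The whole task is therefore to show $f(\tilde{x})=f\bigl(\gcd(x,n)\bigr)$ for every $x\in\mZ$. The plan is to apply the hypothesis to the two natural numbers $\tilde{x}$ and $\gcd(x,n)$. For that it suffices to check that both lie in $\mN$ and that they have the same greatest common divisor with $n$.

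First I verify membership in $\mN$. The transformation maps $\mZ$ onto $\mZ_n=\{1,\dots,n\}\subset\mN$, so $\tilde{x}\in\mN$. Also $\gcd(x,n)\ge1$ because $n\ge2>0$; this includes the case $x=0$, where $\gcd(0,n)=n$. Hence $\gcd(x,n)\in\mN$.

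Next I compare the gcds with $n$. Since $\tilde{x}\congr x\modu n$, we can write $\tilde{x}=x+kn$ for some $k\in\mZ$. By the Euclidean invariance $\gcd(a+kn,n)=\gcd(a,n)$, this gives $\gcd(\tilde{x},n)=\gcd(x,n)$. For the second number, $\gcd\bigl(\gcd(x,n),n\bigr)=\gcd(x,n)$, because $\gcd(x,n)$ divides $n$. Therefore $\gcd(\tilde{x},n)=\gcd\bigl(\gcd(x,n),n\bigr)$, and the hypothesis yields $f(\tilde{x})=f\bigl(\gcd(x,n)\bigr)$.

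No step is a real obstacle. The only points needing care are the edge cases: negative $x$, $x=0$, and $x\congr 0\modu n$, which gives $\tilde{x}=n$ and $\gcd(x,n)=n$, consistent with the above. In each of these, both numbers fed to $f$ must be positive, and the gcd convention for nonpositive integers must be the usual nonnegative one. With that convention the argument covers all $x\in\mZ$ without any case distinction.
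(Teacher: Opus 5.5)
Your proof is correct, and it differs from the paper's at the decisive step. The paper observes $1\le\gcd(x,n)\le n$, asserts $\gcd(x,n)\equiv x \pmod{n}$, and then substitutes $\gcd(x,n)$ for $x$ inside $1+(x-1)\bmod n$.

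That congruence is false in general. For $x=4$, $n=6$ one has $\gcd(4,6)=2\not\equiv 4 \pmod{6}$. The paper's argument also never uses the hypothesis on $f$. If it were valid, it would give $f\bigl(1+(x-1)\bmod n\bigr)=f\bigl(\gcd(x,n)\bigr)$ for every $f$, which already fails for $f(t)=t$ at $x=4$, $n=6$ (the values are $4$ and $2$).

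Your route uses only what is actually true: $\tilde{x}$ and $\gcd(x,n)$ are two natural numbers with the same greatest common divisor with $n$, since $\gcd(\tilde{x},n)=\gcd(x,n)=\gcd\bigl(\gcd(x,n),n\bigr)$. This is exactly where the hypothesis on $f$ must enter. Your explicit check that both arguments of $f$ lie in $\mN$, including the cases $x=0$ and $x\equiv 0 \pmod{n}$, is what makes that hypothesis applicable.

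So your argument is not merely an alternative route; it is the correct proof, and it supplies the step the paper's proof gets wrong.
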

\begin{proof}
We have $1\le\gcd(x,n)\le n$ and $\gcd(x,n)\congr x \modu n$.\\
Therefore, $\fn f(x) =  f\bigl(1+ (\gcd(x,n)-1) \modo n\bigr) = f\bigl(\gcd(x,n)\bigr)$.
\end{proof}

We would like to emphasize that the functions $\fn f$ are always periodic since they are defined by congruences $\modo n$. Consequently, this also applies to the functions\lb examined in Lemma~\refs{mod_gcd}. Such functions are also even functions and therefore\lb reversely periodic with the same period $n$.

\begin{lemm} \label{periodic_fn}
Let  $n\in\mN>1$, and $f:\mN\to\mR_{\ge0}$ be a non-negative arithmetic function.\\
The function $\fn f$ is then periodic with period $n$.
\end{lemm}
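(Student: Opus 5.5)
The claim follows directly from the definition of $\fn f$ in Lemma~\refs{AF_extend}, namely $\fn f(x)=f\bigl(1+(x-1)\modo n\bigr)$ for $x\in\mZ$. I would show that $\fn f(x+n)=\fn f(x)$ for every $x\in\mZ$, which is the defining property of an $n$-periodic function. The definition cited from \cite{Schwarz_Spilker_1994} only requires this for $x\in\mN$, but since $\fn f$ is defined on all of $\mZ$ I would prove it for all integers.

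The single key step is the congruence $(x+n)-1\congr x-1\modu n$. The reduction $\modo n$ used in the definition is understood as the least non-negative residue. Hence $\bigl((x+n)-1\bigr)\modo n=(x-1)\modo n$, and adding $1$ to both sides gives the same representative in $\mZ_n$. Applying $f$ then yields
\[ \fn f(x+n)=f\bigl(1+(x+n-1)\modo n\bigr)=f\bigl(1+(x-1)\modo n\bigr)=\fn f(x). \]
Non-negativity of $f$ plays no role here; it only guarantees that $\fn f$ is again non-negative.

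The only point needing care, and the only possible obstacle, is the convention for $\modo n$ on negative arguments. One must read $(x-1)\modo n$ as the unique $r\in\{0,\dots,n-1\}$ with $r\congr x-1\modu n$. This is exactly what makes $x\mapsto 1+(x-1)\modo n$ map $\mZ$ onto $\mZ_n$, as used in the proof of Lemma~\refs{AF_extend}. With this convention the residue depends only on the class of $x$ modulo $n$, so the argument holds uniformly for negative $x$ as well.

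Finally, I would remark that $n$ is indeed \emph{a} period, in the sense of the cited definition. The minimal period of $\fn f$ may be a proper divisor of $n$, for instance when $f$ is constant on $\mZ_n$, so the statement should not be read as claiming minimality.
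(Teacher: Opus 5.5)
Your proposal is correct and follows essentially the same route as the paper: the paper's proof is the one-line chain $\fn f(x)=f\bigl(1+(x-1)\bmod n\bigr)=f\bigl(1+(x+n-1)\bmod n\bigr)=\fn f(x+n)$ for all $x\in\mZ$, resting on the same congruence you use. Your added remarks, on reading $\bmod$ as the least non-negative residue for negative arguments and on $n$ not necessarily being the minimal period, are accurate and only make explicit what the paper leaves implicit.
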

\begin{proof}
Let $x\in\mZ$. By Lemma~\refs{AF_extend}, we get
\[ \fn f(x) = f \bigl(1+ (x-1) \modo n\bigr)=f\bigl(1+ (x+n-1) \modo n\bigr)=\fn f(x+n). \qedhere \]
\end{proof}

\begin{lemm} \label{even_gcd}
Let  $n\in\mN>1$, $x,y\in\mZ$, and $f:\mN\to\mR_{\ge0}$. If furthermore $f(x)=f(y)$\lb for all $x,y\in\mN$ with $\gcd(x,n)=\gcd(y,n)$, then
\[ \fn f(x) = \fn f(-x) \quad\text{ and }\quad \fn f(x) = \fn f(n-x) \quad\text{ for all } x\in\mZ. \]
\end{lemm}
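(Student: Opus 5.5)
The plan is to reduce both identities to Lemma~\refs{mod_gcd}, which under the present hypothesis gives $\fn f(x)=f\bigl(\gcd(x,n)\bigr)$ for every $x\in\mZ$. Once this representation is available, it suffices to compare the relevant greatest common divisors. No property of $f$ beyond the stated hypothesis is needed.

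For the first identity, I will use that $\gcd(x,n)=\gcd(-x,n)$ for every $x\in\mZ$, since the divisors of $x$ and $-x$ coincide. Applying Lemma~\refs{mod_gcd} to both $x$ and $-x$ then yields
\[ \fn f(x)=f\bigl(\gcd(x,n)\bigr)=f\bigl(\gcd(-x,n)\bigr)=\fn f(-x). \]

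For the second identity, I will combine the first with Lemma~\refs{periodic_fn}, which shows that $\fn f$ is $n$-periodic. This gives
\[ \fn f(n-x)=\fn f(-x+n)=\fn f(-x)=\fn f(x). \]
Alternatively, one can observe directly that $\gcd(n-x,n)=\gcd(x,n)$, because any common divisor of $n$ and $x$ also divides $n-x$, and conversely. Applying Lemma~\refs{mod_gcd} once more then gives the claim.

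The only delicate point is the convention for $\gcd(0,n)$ when $x\equiv0 \pmod n$. Here one must check that $\gcd(0,n)=n$ and that $1+(x-1)\modo n=n$, so that Lemma~\refs{mod_gcd} still applies. Its proof already covers this case, since $\gcd(x,n)\congr x \modu n$ holds for all $x\in\mZ$. Beyond this check, the argument is routine.
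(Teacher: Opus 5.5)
Your proof is correct and is essentially the paper's own: Lemma~\ref{mod_gcd} together with $\gcd(-x,n)=\gcd(x,n)$ gives the first identity, and the $n$-periodicity from Lemma~\ref{periodic_fn} then gives the second. One correction to your closing remark: the claim that $\gcd(x,n)\equiv x \pmod n$ for all $x\in\mZ$ is false (e.g.\ $x=4$, $n=6$), even though the paper's proof of Lemma~\ref{mod_gcd} also asserts it; this does no harm here, since that lemma is stated for all $x\in\mZ$ and is correctly justified by noting that $r=1+(x-1)\bmod n$ satisfies $r\equiv x\pmod n$, hence $\gcd(r,n)=\gcd(x,n)$, so the hypothesis yields $f(r)=f\bigl(\gcd(x,n)\bigr)$.
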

\begin{proof}
According to Lemma~\refs{mod_gcd}, we have
\[ \fn f(x) = f\bigl(\gcd(x,n)\bigr) = f\bigl(\gcd(-x,n)\bigr) = \fn f(-x). \]
And with Lemma~\refs{periodic_fn}, we get
\[ \fn f(x) = \fn f(-x) = \fn f(-x+n) = \fn f(n-x). \qedhere \]
\end{proof}
\begin{rema}
The properties of an even function and a periodic and reversely periodic function with the same period are equivalent:
\[ \fn f(x) = \fn f(-x) \quad\Longleftrightarrow\quad \fn f(x) = \fn f(n+x) = \fn f(n-x). \]
\end{rema}



\section{Pseudometrics and preorders on $\mZ^m$} \label{MO_Z_M}

This section aims to define pseudometrics and preorders on $\mZ^m$ based on the\lb principles explained above. Obvious ideas for metrics in Cartesian product spaces are, first of all, Minkowski distances ($L^p$), including the Euclidean distance ($L^2$)\lb and the Manhattan distance ($L^1$) \cite{Deza_Deza_2009}. Instead, we want to summarise the positional\lb distances like in the three Pythagorean means, the arithmetic mean ($AM$),\lb the geometric mean ($GM$), and the harmonic mean ($HM$). Arithmetic mean and\lb Manhattan distance are equivalent here.

\begin{lemm} \label{pseudometric_Z_m}
Let  $m\in\mN$ and $\seq x,\seq y\in\mZ^m$. For every pseudometric $d$ on $\mZ$,
\begin{align*}
d^{(m)}_{AM}\bigl(\seq x,\seq y\bigr) &= \frac{1}{m}\sum_{i=0}^{m-1}d(x_i,y_i), \\
d^{(m)}_{GM}\bigl(\seq x,\seq y\bigr) &= \left(\prod_{i=0}^{m-1}d(x_i,y_i)\right)^{\frac{1}{m}}\text{, \quad and} \\
d^{(m)}_{HM}\bigl(\seq x,\seq y\bigr) &= \begin{cases}
	\displaystyle{\frac{m}{\sum_{i=0}^{m-1}\frac{1}{d(x_i,y_i)}}} & \text{if } d(x_i,y_i)>0 \text{ for all pairs } (x_i,y_i), \\
	0 & \text{otherwise} \end{cases}
\end{align*}
are pseudometrics on $\mZ^m$.
\end{lemm}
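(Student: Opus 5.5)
The plan is to check the three pseudometric axioms, identity ($d(\seq x,\seq x)=0$), symmetry, and the triangle inequality, separately for each of the three means. In each case I would reduce them to the corresponding properties of $d$ on $\mZ$, applied coordinatewise.

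Identity and symmetry are immediate for all three. If $\seq x=\seq y$, then every $d(x_i,x_i)=0$. The arithmetic mean and the product therefore vanish, and the harmonic mean is $0$ by its case definition. Symmetry follows because each $d(x_i,y_i)$ is symmetric and the three means are symmetric expressions of these values.

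For $d^{(m)}_{AM}$ the triangle inequality follows at once. I would apply $d(x_i,z_i)\le d(x_i,y_i)+d(y_i,z_i)$ in every coordinate, sum over $i$ and divide by $m$.

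The main obstacle is the triangle inequality for $d^{(m)}_{GM}$ and $d^{(m)}_{HM}$. Put $a_i=d(x_i,y_i)$ and $b_i=d(y_i,z_i)$. The coordinatewise bound together with monotonicity of the mean gives
\[ M\bigl(d(x_i,z_i)\bigr)\le M(a_i+b_i). \]
To finish one would need subadditivity, $M(a+b)\le M(a)+M(b)$. For $M=GM$ and $M=HM$ the inequality goes the other way: both means are concave and positively homogeneous, hence superadditive (by Hölder's inequality for $GM$). So this route cannot work.

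I would therefore first test the claim on a small case, and it fails. Take $m=2$, any $d$ with $d(0,1)>0$, and the points $\seq x=(0,0)$, $\seq y=(0,1)$, $\seq z=(1,1)$. Then $d^{(2)}_{GM}(\seq x,\seq y)=\bigl(d(0,0)\,d(0,1)\bigr)^{1/2}=0$ and $d^{(2)}_{GM}(\seq y,\seq z)=0$. However, $d^{(2)}_{GM}(\seq x,\seq z)=d(0,1)>0$. The same triple breaks $d^{(2)}_{HM}$, because a zero coordinate forces the value $0$.

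This also happens for the $1$-centred pseudometrics of Corollary~\refs{induced_pseudometric_preorder_Z}. For example, with $f\in\msG$ and $f(0)>0$, the points $(1,1)$, $(1,0)$, $(0,0)$ give the same failure.

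So the plan is to prove the statement for $d^{(m)}_{AM}$ as above. For $d^{(m)}_{GM}$ and $d^{(m)}_{HM}$ I would report that the triangle inequality does not hold in general. A corrected version needs an extra hypothesis: either restricting to pairs with all coordinates distinct, or redefining these distances through the means of the generating-function values $f(x_i)$ themselves (a centred construction in the spirit of Lemma~\refs{function_metric}) rather than of the pairwise distances $d(x_i,y_i)$.
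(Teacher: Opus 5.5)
Your analysis is correct, and here the gap is in the paper's proof, not in yours.

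\textbf{The arithmetic mean.} Your argument for $d^{(m)}_{AM}$ is complete.

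\textbf{Geometric and harmonic means.} Your triple $(0,0),(0,1),(1,1)$ is a valid counterexample to the triangle inequality for both $d^{(2)}_{GM}$ and $d^{(2)}_{HM}$. Padding it as $(0,\dots,0)$, $(0,1,\dots,1)$, $(1,\dots,1)$ gives a counterexample for every $m\ge2$. So the lemma is false as stated for the geometric and harmonic versions.

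\textbf{Where the paper's proof fails.} Its ``triangle inequality'' step assumes $d(x_i,y_i)\le d(y_i,z_i)$ in each coordinate. It then checks that sums, products and harmonic sums preserve this ordering. That is only monotonicity of the means. The quantity $d(x_i,z_i)$ never appears, and the assumed coordinatewise ordering is not available for an arbitrary triple. So nothing is established about $d^{(m)}_{GM}$ or $d^{(m)}_{HM}$ satisfying the triangle inequality. In fact the paper's own Lemma~\ref{means_and_sums}\,(2),(3) proves the superadditivity you identified as the obstruction. The remark right after this lemma (a single coinciding coordinate forces $d^{(m)}_{GM}=d^{(m)}_{HM}=0$) is exactly the mechanism of your counterexample.

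\textbf{Your first proposed repair fails.} Requiring the sequences to differ in every coordinate is not enough. Take $d(a,b)=|a-b|$, $m=2$ and the points $(0,0)$, $(1,100)$, $(2,101)$, which pairwise differ in both coordinates. The $d^{(2)}_{GM}$-values give $\sqrt{202}>10+1$, and the $d^{(2)}_{HM}$-values give $\frac{404}{103}>\frac{200}{101}+1$.

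Even the paper's intended setting fails: a centred $d(a,b)=f(a)+f(b)$ on $\ap{\mZ}{2}$, once $f$ has enough zeros. Take $f(0)=f(1)=f(6)=f(9)=0$ and $f(5)=f(10)=1$. Then both $d^{(2)}_{GM}$ and $d^{(2)}_{HM}$ give distance $0$ from $\langle 5\rangle$ to $\langle 0\rangle$ and from $\langle 0\rangle$ to $\langle 9\rangle$. The distance from $\langle 5\rangle$ to $\langle 9\rangle$ is $1$.

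\textbf{Your second proposed repair is sound.} Taking the mean of the values $f(x_i)$ and then centring as in Lemma~\ref{function_metric} works. This is what the paper actually uses in Corollary~\ref{induced_pseudometric_preorder_Z_m}. The later preorder statements only compare the real-valued functions $\seq x\mapsto d^{(m)}_{GM}\bigl((1),\seq x\bigr)$ and $\seq x\mapsto d^{(m)}_{HM}\bigl((1),\seq x\bigr)$. They are therefore unaffected by the failure of this lemma.
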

\begin{proof}
 \ \\[1ex]
 Identity:

$\seq x=\seq y\iff \forall i=0,\dots,m-1 : x_i=y_i$.

Then also $d(x_i,y_i)=0$ for all $i=0,\dots,m-1$.
 \ \\[1.5ex]
Symmetry:

$d(x_i,y_i)=d(y_i,x_i)$ for all $i=0,\dots,m-1$.
 \ \\[1.5ex]
Triangle inequality:

We demonstrate that the relation \enquote{less than or equal to} in the set of non-negative real numbers transfers to the sum, the product, and the harmonised sum of reciprocals for the case $m=2$. The complete assertions follow by induction analogously.

Let now $z_0,z_1\in\mZ$, $0\le d(x_0,y_0)\le d(y_0,z_0)$, and $0\le d(x_1,y_1)\le d(y_1,z_1)$. Then, we directly get

$d(x_0,y_0)+d(x_1,y_1)\le d(y_0,z_0)+d(x_1,y_1)$\\
for the sum and

$d(x_0,y_0)\mdot d(x_1,y_1)\le d(y_0,z_0)\mdot d(x_1,y_1)\le d(y_0,z_0)\mdot d(y_1,z_1)$\\
for the product.

If $d(x_0,y_0)>0$ and $d(x_1,y_1)>0$, then also $d(y_0,z_0)>0$ and $d(y_1,z_1)>0$. Thus,
$\displaystyle{\frac{1}{d(x_0,y_0)}\ge\frac{1}{d(y_0,z_0)}}$\quad and\quad$\displaystyle{\frac{1}{d(x_1,y_1)}\ge\frac{1}{d(y_1,z_1)}}$, and finally

\begin{align*}
\frac{1}{d(x_0,y_0)}+\frac{1}{d(x_1,y_1)}&\ge\frac{1}{d(y_0,z_0)}+\frac{1}{d(x_1,y_1)}
\ge\frac{1}{d(y_0,z_0)}+\frac{1}{d(y_1,z_1)} \\
\iff\qquad\frac{1}{\displaystyle{\frac{1}{d(x_0,y_0)}+\frac{1}{d(x_1,y_1)}}}&\le\frac{1}{\displaystyle{\frac{1}{d(y_0,z_0)}+\frac{1}{d(y_1,z_1)}}}.
\end{align*}
Otherwise, if $d(x_0,y_0)=0$ or $d(x_1,y_1)=0$, then $d^{(2)}_{HM}\bigl(\seq x,\seq y\bigr)=0$ also satisfies the required inequality.
\end{proof}
\begin{rema}
If there is a pair $(x_i,y_i)$ with $x_i=y_i$ for any $0\le i\le m-1$, then\lb $d^{(m)}_{GM}\bigl(\seq x,\seq y\bigr)=d^{(m)}_{HM}\bigl(\seq x,\seq y\bigr)=0$. These distances are only interesting if\lb all $x_i\ne y_i$. This applies to different sequences of consecutive integers which we\lb consider in the next section. For the sake of completeness, it will also be discussed further here.
\end{rema}
\ \\[-2ex]

This lemma holds for arbitrary pseudometrics $d$ on $\mZ$. Thus, it also does for the\lb $1$-centred pseudometrics defined in the previous section. In the following, we examine only this subset of pseudometrics on $\mZ$. In all cases where $x=y$, we have $d(x,y)=0$.

Otherwise, we set $d(x,y)=d(1,x)+d(1,y)=f(x)+f(y)$, where $f\in\msG$.\lb According to Definition~\refs{generating_functions_Z} and Corollary~\refs{induced_pseudometric_preorder_Z}, we can also apply the supplementary\lb functions $g\in\msQ$ or $h\in\msT$ instead of $f$ such that $d(x,y)=g(x)+g(y)-2$ or\lb $d(x,y)=2-h(x)-h(y)$ for $x\ne y$, respectively.

For $x\ne y$, this results in the same pseudometrics and can be expressed as


\begin{align*}
\quad d^{(m)}_{AM}\bigl(\seq x,\seq y\bigr) &= \frac{1}{m}\sum_{i=0}^{m-1}d(x_i,y_i) = \frac{1}{m}\sum_{\substack{i=0\\x_i\ne y_i}}^{m-1}\bigl(f(x_i)+f(y_i)\bigr) \hspace*{100mm}\\
&= \frac{1}{m}\sum_{\substack{i=0\\x_i\ne y_i}}^{m-1}\bigl(g(x_i)+g(y_i)-2\bigr)
= \frac{1}{m}\sum_{\substack{i=0\\x_i\ne y_i}}^{m-1}\bigl(2-h(x_i)+h(y_i)\bigr), \\[2ex]
\quad d^{(m)}_{GM}\bigl(\seq x,\seq y\bigr) &= \left(\prod_{i=0}^{m-1}d(x_i,y_i)\right)^{\frac{1}{m}} = \left(\prod_{i=0}^{m-1} \bigl(f(x_i)+f(y_i)\bigr)\right)^{\frac{1}{m}} \hspace*{20mm}\\
&= \left(\prod_{i=0}^{m-1} \bigl(g(x_i)+g(y_i)-2\bigr)\right)^{\frac{1}{m}}
= \left(\prod_{i=0}^{m-1} \bigl(2-h(x_i)+h(y_i)\bigr)\right)^{\frac{1}{m}}\text{, \ \ and}
\end{align*}
\begin{align*}
\quad d^{(m)}_{HM}\bigl(\seq x,\seq y\bigr) &= \begin{cases}
	\displaystyle{\frac{m}{\sum_{i=0}^{m-1}\frac{1}{d(x_i,y_i)}}} & \text{if } d(x_i,y_i)>0 \text{ for all pairs } (x_i,y_i), \\
	0 & \text{otherwise} \end{cases} \\
&= \begin{cases}
	\displaystyle{\frac{m}{\sum_{i=0}^{m-1}\frac{1}{f(x_i)+f(y_i)}}} & \text{if } f(x_i)+f(y_i)>0 \text{ for all pairs } (x_i,y_i), \\
	0 & \text{otherwise} \end{cases} \\
&= \begin{cases}
	\displaystyle{\frac{m}{\sum_{i=0}^{m-1}\frac{1}{g(x_i)+g(y_i)-2}}} & \text{if } g(x_i)+g(y_i)>2 \text{ for all pairs } (x_i,y_i), \\
	0 & \text{otherwise} \end{cases} \\
&= \begin{cases}
	\displaystyle{\frac{m}{\sum_{i=0}^{m-1}\frac{1}{2-h(x_i)+h(y_i)}}} & \text{if } h(x_i)+h(y_i)<2 \text{ for all pairs } (x_i,y_i), \\
	0 & \text{otherwise}. \end{cases} \hspace*{20mm}\\
\end{align*}

Together with each of these distances, $\mZ^m$ forms a pseudometric space. Thus, we\lb investigate the question of which centred pseudometrics and generating functions based on the principles explained above can be derived from these spaces. The\lb sequence $(1)\in\mZ^m$ seems to be the natural choice here. It contains no information about divisibility because $1$ is not divisible by any prime number. It is a kind of\lb expression of indivisibility. The distance from this point therefore provides a measure of divisibility.

Under this premise, we redefine generating functions and its supplements for the current context in general. We use the superscript $^{(m)}$ as a short tag for the reference to $\mZ^m$. The special case of $m=1$ leads to $\mZ$ as mentioned above. The corresponding index $^{(1)}$ is omitted here in accordance with Definition~\refs{generating_functions_Z}. Evidently, $\mZ$ is a special case of $\mZ^m$ for $m=1$. We would like to point out that in this case all statements made in the following section correspond to those in the previous section.

\begin{defi} \label{generating_functions_Z_m}
The set of generating functions of $(1)$-\hspace{0.25mm}centred pseudometrics on $\mZ^m$ is now
\[ \msGm = \left\{ f^{(m)}:\mZ^m\to\mR_{\ge0}\text{ \ with \ }f^{(m)}\bigl((1)\bigr)=0 \right\}. \]

The related supplements are
\begin{align*}
\msQm &= \left\{ g^{(m)}:\mZ^m\to\mR_{\ge0}\text{ \ with \ }(g^{(m)}-1)\in\msGm\right\} \quad\text{and} \\ \msTm &= \left\{ h^{(m)}:\mZ^m\to\mR_{\ge0}\text{ \ with \ }(1-h^{(m)})\in\msGm\right\}.
\end{align*}
\end{defi}

Lemma~\refs{pseudometric_Z_m} implies three kinds of generating functions and the related pseudo-\lb metrics. The results are again Pythagorean means of simple basis function values. Where appropriate, we supplement the superscript index with \enquote{$+$}, \enquote{$\ldot$} or \enquote{$\rts$} to\lb indicate the combination of the values of the basis functions by arithmetic, geometric or harmonic mean, respectively.

\begin{lemm} \label{implied_generating_functions_Z_m}
Let $m\in\mN$, $x,y\in\mZ$, and $\seq x\in\mZ^m$. Furthermore, let $d(x,y)$ be a pseudo-\lb metric on $\mZ$. With $f(x)=d(1,x)$, \\[-2ex]
\begin{align*}
\fxp f x &= d^{(m)}_{AM}\bigl((1),\seq x\bigr) = \frac{1}{m}\sum_{i=0}^{m-1}f\seq x, \\
\fxt f x &= d^{(m)}_{GM}\bigl((1),\seq x\bigr) = \left(\prod_{i=0}^{m-1}f\seq x\right)^{\frac{1}{m}}\text{, \quad and} \\
\fxr f x &= d^{(m)}_{HM}\bigl((1),\seq x\bigr) = \begin{cases}
	\displaystyle{\frac{m}{\sum_{i=0}^{m-1}\frac{1}{f\seq x}}} & \text{if } f\seq x>0 \text{ for all } x_i, \\
	0 & \text{otherwise} \end{cases}
\end{align*}
are generating functions on $\mZ^m$, i.e. $ \fp f,\ \ft f,\ \fr f \in \msGm$.

Let furthermore $g\in\msQ$ and $h\in\msT$. With the explicit definitions
\begin{align*}
\fxp g x &= \frac{1}{m}\sum_{i=0}^{m-1}g\seq x, \\
\fxt g x &= \left(\prod_{i=0}^{m-1}g\seq x\right)^{\frac{1}{m}}, \\
\fxr g x &= \displaystyle{\frac{m}{\sum_{i=0}^{m-1}\frac{1}{g\seq x}}}, \\
\fxp h x &= \frac{1}{m}\sum_{i=0}^{m-1}h\seq x, \\
\fxt h x &= \left(\prod_{i=0}^{m-1}h\seq x\right)^{\frac{1}{m}}\text{, \quad and} \\
\fxr h x &= \begin{cases}
	\displaystyle{\frac{m}{\sum_{i=0}^{m-1}\frac{1}{h\seq x}}} & \text{if } h\seq x>0 \text{ for all } x_i, \\
	0 & \text{otherwise,} \end{cases}
\end{align*}
these functions are supplementary functions on $\mZ^m$ where $\fp g,\ \ft g,\ \fr g \in \msQm$\lb and $ \fp h,\ \ft h,\ \fr h \in \msTm$.
\end{lemm}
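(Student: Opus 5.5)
The plan is to verify the defining conditions of Definition~\refs{generating_functions_Z_m} directly for each of the nine functions. That means checking non-negativity of the values and the correct value at the centre $(1)$, then using the set-theoretic characterisations of the supplements from Lemma~\refs{generating_supplements}. Throughout, the argument relies on a single fact about Pythagorean means of non-negative reals. If all entries equal a constant $c$, then AM, GM and HM all equal $c$; for the HM with $c=0$ this holds by the convention ``$0$ otherwise''. Moreover, each mean lies between the minimum and the maximum of its entries.

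First I treat $f(x)=d(1,x)$. Since $d$ is a pseudometric, $f\ge0$ and $f(1)=0$, so $f\in\msG$. The first three identities are just Lemma~\refs{implied_generating_functions_Z_m}'s formulas read off from Lemma~\refs{pseudometric_Z_m} with first argument $(1)$, because $d(1,x_i)=f(x_i)$. Evaluating at $\seq x=(1)$ gives all entries $0$. Hence $\fp f((1))=0$ and $\ft f((1))=0$, and $\fr f((1))=0$ by the ``otherwise'' branch. Non-negativity of all three is clear, so they lie in $\msGm$. Alternatively, one can observe that $d^{(m)}_{\dots}$ is a pseudometric by Lemma~\refs{pseudometric_Z_m}, so its distance from $(1)$ automatically vanishes at $(1)$ and is non-negative.

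Next, for $g\in\msQ$, Lemma~\refs{generating_supplements} gives $g(x)\ge1$ and $g(1)=1$. All entries $g(x_i)\ge1>0$, so the harmonic mean is well defined without a case distinction. By the min/max bound, each of $\fp g$, $\ft g$, $\fr g$ is $\ge1$, and each equals $1$ at $(1)$. Hence $\fp g-1,\ \ft g-1,\ \fr g-1$ are non-negative and vanish at $(1)$, i.e.\ lie in $\msGm$. Analogously, for $h\in\msT$ we have $0\le h(x_i)\le1$ and $h(1)=1$. The three means are then in $[0,1]$ and equal $1$ at $(1)$; for the HM with some zero entry the value $0$ is still in $[0,1]$. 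So $1-\fp h$, $1-\ft h$ and $1-\fr h$ lie in $\msGm$.

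The only delicate point is the min/max bound for the geometric and harmonic means together with the zero conventions. I would justify it by monotonicity of each mean in every argument. Replacing all entries by the minimum (resp.\ maximum) decreases (resp.\ increases) the mean to that constant. For the harmonic mean with a zero entry the convention gives $0$, which is consistent with the lower bound.
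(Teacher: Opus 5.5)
Your proposal is correct and follows essentially the paper's route: you verify non-negativity and the value at the centre $(1)$ directly from $f(1)=d(1,1)=0$ and $g(1)=h(1)=1$, and you note that $g(x_i)\ge1>0$ makes the case distinction for the harmonic mean unnecessary. Your argument is more complete than the paper's proof, which only asserts that the remaining conditions $\fp g,\ \ft g,\ \fr g\ge1$ and $0\le\fp h,\ \ft h,\ \fr h\le1$ \enquote{follow from Definition~\ref{generating_functions_Z_m}}; your min/max bound for the three means, including the zero convention for the harmonic mean, actually proves them, which the paper only does later in Lemma~\ref{co-domains}.
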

\begin{proof}
By definition, $f\in\msG$ and the co-domains of the functions $\fp f$, $\ft f$,\lb and $\fr f$  are each $\mR_{\ge0}$. With $f(1)=d(1,1)=0$, we finally get
\[ \fp f\bigl((1)\bigr) = \ft f\bigl((1)\bigr) = \fr f\bigl((1)\bigr) = 0. \]

We note that by definition, $g\seq x>0$ always holds. A case differentiation in the definition is therefore not necessary. From $g(1)=h(1)=1$, we conclude

\begin{align*}
\fp g\bigl((1)\bigr) &= \ft g\bigl((1)\bigr) = \fr g\bigl((1)\bigr) = 1 \quad\text{and} \\
\fp h\bigl((1)\bigr) &= \ft h\bigl((1)\bigr) = \fr h\bigl((1)\bigr) = 1.
\end{align*}
The remaining assumptions follow from Definition~\refs{generating_functions_Z_m}.
\end{proof}
\ \\[-1ex]

From $\msGm$, $\msQm$ and $\msTm$, pseudometrics and preorders on $\mZ^m$ can be derived\lb analogously, as was done in Corollary~\refs{induced_pseudometric_preorder_Z}. We continue to pursue only the specific functions defined in the previous lemma.

\begin{coro} \label{induced_pseudometric_preorder_Z_m}
Let $\seq x,\ \seq y\in\mZ^m$, $f\in\msG$, $g\in\msQ$, and $h\in\msT$. For $\seq x\ne \seq y$, we set
\begin{align*}
\dSfxyp f \msG x y &= \fxp f x + \fxp f y, \\
\dSfxyt f \msG x y &= \fxt f x + \fxt f y, \\
\dSfxyr f \msG x y &= \fxr f x + \fxr f y, \\
\dSfxyp g \msQ x y &= \fxp g x + \fxp g y - 2, \\
\dSfxyt g \msQ x y &= \fxt g x + \fxt g y - 2, \\
\dSfxyr g \msQ x y &= \fxr g x + \fxr g y - 2, \\
\dSfxyp h \msT x y &= 2 - \fxp h x - \fxp h y), \\
\dSfxyt h \msT x y &= 2 - \fxt h x - \fxt h y, \text{ \quad and} \\
\dSfxyr h \msT x y &= 2 - \fxr h x - \fxr h y,
\end{align*}
whereas $d_{\dots}(\seq x,\seq y)=0$ \ for $\seq x=\seq y$. Then, these distance functions are $(1)$-\hspace{0.25mm}centred pseudometrics on $\mZ^m$. \\

Consequently, the following relations are preorders on $\mZ^m$.
\begin{align*}
\seq x\preceq_{\dSfp f \msG} \seq y &\iff \fxp f x \le \fxp f y, \\
\seq x\preceq_{\dSft f \msG} \seq y &\iff \fxt f x \le \fxt f y, \phantom{\text{ \quad and}} \\
\seq x\preceq_{\dSfr f \msG} \seq y &\iff \fxr f x \le \fxr f y,
\end{align*}
\begin{align*}
\seq x\preceq_{\dSfp g \msQ} \seq y &\iff \fxp g x \le \fxp g y, \\
\seq x\preceq_{\dSft g \msQ} \seq y &\iff \fxt g x \le \fxt g y, \phantom{\text{ \quad and}} \\
\seq x\preceq_{\dSfr g \msQ} \seq y &\iff \fxr g x \le \fxr g y, \\
\seq x\preceq_{\dSfp h \msT} \seq y &\iff \fxp h x \ge \fxp h y, \\
\seq x\preceq_{\dSft h \msT} \seq y &\iff \fxt h x \ge \fxt h y, \text{ \quad and} \\
\seq x\preceq_{\dSfr h \msT} \seq y &\iff \fxr h x \ge \fxr h y.
\end{align*}
\end{coro}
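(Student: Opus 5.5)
The plan is to derive the corollary from the general framework of the introduction, with $\mS=\mZ^m$ and centre $o=(1)$. By Lemma~\refs{implied_generating_functions_Z_m}, each of $\fp f$, $\ft f$, $\fr f$ belongs to $\msGm$. Likewise $\fp g$, $\ft g$, $\fr g$ belong to $\msQm$, and $\fp h$, $\ft h$, $\fr h$ belong to $\msTm$. Since $\msGm$, $\msQm$, $\msTm$ are exactly $\msGS$, $\msQS$, $\msTS$ for this choice of $\mS$ and $o$ (Definition~\refs{generating_functions_Z_m} against Definition~\refs{generating_functions} and Lemma~\refs{generating_supplements}), Proposition~\refs{function_metric_other} applies directly. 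It yields that each of the nine distance functions is a $(1)$-centred pseudometric on $\mZ^m$.

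To make this concrete, I will check one representative of each type.

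\begin{itemize}
\item For $\dSfp f \msG$, use Lemma~\refs{function_metric}. Symmetry and the identity convention hold by definition, and the triangle inequality follows from non-negativity: the middle term $\fxp f y$ appears twice on the left side.
\item For the $\msQ$-type, rewrite $\fxp g x+\fxp g y-2=(\fxp g x-1)+(\fxp g y-1)$. This reduces it to the generating function $\fp g-1\in\msGm$.
\item For the $\msT$-type, rewrite $2-\fxp h x-\fxp h y=(1-\fxp h x)+(1-\fxp h y)$, which uses $1-\fp h\in\msGm$.
\end{itemize}

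Centredness is the identity $d\bigl((1),\seq x\bigr)=$ the corresponding generating function, using the values at $(1)$ computed in Lemma~\refs{implied_generating_functions_Z_m}: they equal $0$ for the $f$-type and $1$ for the $g$- and $h$-types.

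For the preorders, I invoke Lemma~\refs{induced_preorder} with centre $(1)$, or directly Corollary~\refs{induced_preorder_other}. The resulting conditions are:

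\begin{itemize}
\item for the $\msG$-type, $d\bigl((1),\seq x\bigr)\le d\bigl((1),\seq y\bigr)$ is exactly the comparison of the $f$-means;
\item for the $\msQ$-type, the distance to $(1)$ is the $g$-mean minus $1$, so the inequality is preserved;
\item for the $\msT$-type, the distance is $1$ minus the $h$-mean, which reverses the inequality and gives the $\ge$ conditions.
\end{itemize}

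Reflexivity and transitivity are inherited from $\le$ on $\mR$.

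The only delicate point is the case $\seq x=(1)$ or $\seq y=(1)$, where the formula for $\seq x\ne\seq y$ must still give the correct distance to the centre. This holds because the generating function vanishes at $(1)$ (or the supplement equals $1$ there). A second point is ensuring that the harmonic-mean variants are well defined. For $\fr g$ this holds because $g\ge1>0$; for $\fr f$ and $\fr h$ it holds via the zero convention, which keeps the values in $\mR_{\ge0}$, and this is already settled in Lemma~\refs{implied_generating_functions_Z_m}. I also note that the displayed $\dSfp h \msT$ carries a stray parenthesis, which I read as $2-\fxp h x-\fxp h y$.
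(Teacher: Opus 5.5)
Your proposal is correct and takes the same route as the paper. The paper's proof simply cites Lemma~\ref{implied_generating_functions_Z_m}, Proposition~\ref{function_metric_other}, and Corollary~\ref{induced_preorder_other}, applied with $\mS=\mZ^m$ and centre $(1)$; your explicit checks of the $\msQ$- and $\msT$-type reductions, the values at $(1)$, and the harmonic-mean zero conventions just unpack those three citations.
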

\begin{proof}
Confer Lemma~\refs{implied_generating_functions_Z_m}, Proposition~\refs{function_metric_other}, and Corollary~\refs{induced_preorder_other}.
\end{proof}
\begin{rema}
The pseudometrics of this corollary are constructed from generating and supplementary functions as described in the previous section. In contrast to $d^{(m)}_{AM}$, $d^{(m)}_{GM}$, and $d^{(m)}_{HM}$, they do not exploit the positional information within the sequences. They simply treat sequences as multisets.
\end{rema}
\ \\[-4ex]

The centred pseudometrics from Corollary~\refs{induced_pseudometric_preorder_Z_m} and the corresponding\lb pseudometrics defined at the beginning in Lemma~\refs{pseudometric_Z_m} are in general different.\lb This may also apply to corresponding preorders. Furthermore, $d^{(m)}_{GM}$ and $d^{(m)}_{HM}$ are not $(1)$-centred pseudometrics in the sense of Lemma~\refs{function_metric}, because\lb $d^{(m)}_{GM}\bigl(\seq x,\seq y\bigr) \ne d^{(m)}_{GM}\bigl((1),\seq x\bigr) + d^{(m)}_{GM}\bigl((1),\seq y\bigr)$ \ or\\ $d^{(m)}_{HM}\bigl(\seq x,\seq y\bigr) \ne d^{(m)}_{HM}\bigl((1),\seq x\bigr) + d^{(m)}_{HM}\bigl((1),\seq y\bigr)$ \ can hold for suitable $\seq x,\seq y$.\\[-2ex]

However, for pairwise different or identical sequences, clear relationships between the corresponding pseudometries and the associated preorders can be derived. Only pseudometrics based on arithmetic means coincide. Nevertheless, with respect to the centre $(1)\in\mZ^m$ considered here, the preorders induced by related pseudometrics are identical.

We define the preorders associated with the pseudometrics according to Lemma~\refs{pseudometric_Z_m} analogously to Lemma~\refs{induced_preorder}. After proving a technical lemma for the Pythagorean means, we formulate the complete theorem.

\begin{lemm} \label{preorder_Z_m}
Let  $m\in\mN$ and $\seq x,\seq y\in\mZ^m$.

For every $1$-centred pseudometric $d$ on $\mZ$, the relations defined by
\begin{align*}
\seq x\preceq_{d^{(m)}_{AM}} \seq y &\iff d^{(m)}_{AM}\bigl((1),\seq x\bigr) \le d^{(m)}_{AM}\bigl((1),\seq y\bigr), \\
\seq x\preceq_{d^{(m)}_{GM}} \seq y &\iff d^{(m)}_{GM}\bigl((1),\seq x\bigr) \le d^{(m)}_{GM}\bigl((1),\seq y\bigr)\text{, \quad and} \\
\seq x\preceq_{d^{(m)}_{HM}} \seq y &\iff d^{(m)}_{HM}\bigl((1),\seq x\bigr) \le d^{(m)}_{HM}\bigl((1),\seq y\bigr)
\end{align*}
are preorders on $\mZ$.
\end{lemm}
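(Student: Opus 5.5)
The plan is to reduce the statement directly to Lemma~\refs{induced_preorder}, applied to the space $\mZ^m$ with the distinguished point $(1)\in\mZ^m$. (Note that the relations live on $\mZ^m$, so the conclusion should read \enquote{preorders on $\mZ^m$}.) First, by Lemma~\refs{pseudometric_Z_m}, each of $d^{(m)}_{AM}$, $d^{(m)}_{GM}$ and $d^{(m)}_{HM}$ is a pseudometric on $\mZ^m$ whenever $d$ is a pseudometric on $\mZ$. A $1$-centred pseudometric is in particular a pseudometric, so this hypothesis is met.

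Second, each relation in the statement has exactly the form treated in Lemma~\refs{induced_preorder}: $\seq x\preceq\seq y$ if and only if the distance from the fixed point $o=(1)$ to $\seq x$ is at most the distance from $o$ to $\seq y$. Lemma~\refs{induced_preorder} allows an arbitrary point $o$ of the space, so no centredness of the derived pseudometrics is needed. This matters, because the text just before the statement notes that $d^{(m)}_{GM}$ and $d^{(m)}_{HM}$ need not be $(1)$-centred.

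Third, as a self-contained check, I would verify reflexivity and transitivity directly. Each relation is the pullback of the total order $\le$ on $\mR_{\ge0}$ along a map $\mZ^m\to\mR_{\ge0}$, namely $\seq x\mapsto d^{(m)}_{\ast}\bigl((1),\seq x\bigr)$. Pullbacks of reflexive, transitive relations remain reflexive and transitive.

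The only point needing care is that these maps are well defined with values in $\mR_{\ge0}$. For the harmonic mean, the case distinction in Lemma~\refs{pseudometric_Z_m} assigns $0$ whenever some $d(1,x_i)=0$, so the map is total. The geometric mean is likewise non-negative. Beyond this there is no real obstacle: the argument is a routine instantiation of Lemma~\refs{induced_preorder}.
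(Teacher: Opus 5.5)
Your proposal is correct and matches the paper, whose entire proof is your third step: reflexivity and transitivity are inherited from $\le$ on $\mR_{\ge0}$ through the map $(x_i)\mapsto d^{(m)}_{\ast}\bigl((1),(x_i)\bigr)$, and you are right that the conclusion should read \enquote{preorders on $\mZ^m$}. Let that direct argument carry the proof rather than the route through Lemma~\ref{induced_preorder}, because the hypothesis that route imports from Lemma~\ref{pseudometric_Z_m} (that $d^{(m)}_{GM}$ and $d^{(m)}_{HM}$ are pseudometrics) actually fails: for $m=2$ and $u=(a,b)$, $v=(a,c)$, $w=(e,c)$ with $d(a,e)>0$ and $d(b,c)>0$ one gets $d^{(2)}_{GM}(u,v)=d^{(2)}_{GM}(v,w)=0<d^{(2)}_{GM}(u,w)$, and likewise for $d^{(2)}_{HM}$.
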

\begin{proof}
The specified relation is reflexive and transitive because the ordinary \enquote{is less than or equal to} relation is reflexive and transitive on $\mR_{\ge0}$.
\end{proof}

\begin{lemm} \label{means_and_sums}
Let  $m\in\mN$ and $a_i,b_i\in\mR$. Then \\[-3ex]
\begin{align*}
(1)\hspace*{15.6mm} \frac{1}{m}\sum_{i=0}^{m-1}(a_i+b_i) &\ \ \,\,=\ \ \,\, \frac{1}{m}\sum_{i=0}^{m-1}a_i + \frac{1}{m}\sum_{i=0}^{m-1}b_i. \\
\text{If all } a_i,b_i\ge0, \text{ then also} \hspace*{30mm} \\[-1ex]
(2)\hspace*{10mm} \left(\prod_{i=0}^{m-1}(a_i+b_i)\right)^{\frac{1}{m}} &
\ \ \,\,\ge\ \ \,\, \left(\prod_{i=0}^{m-1}a_i\right)^{\frac{1}{m}}+\left(\prod_{i=0}^{m-1}b_i\right)^{\frac{1}{m}}. \hspace*{30mm} \\[0.5ex]
\text{If all } a_i,b_i>0, \text{ then additionally} \hspace*{16.2mm} \\
(3)\hspace*{22.2mm} \displaystyle{\frac{m}{\sum_{i=0}^{m-1}\frac{1}{a_i+b_i}}} &
\ \ \,\,\ge\ \ \,\,\ \displaystyle{\frac{m}{\sum_{i=0}^{m-1}\frac{1}{a_i}} + \frac{m}{\sum_{i=0}^{m-1}\frac{1}{b_i}}}.
\end{align*}
\end{lemm}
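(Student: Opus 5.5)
Part (1) is pure linearity of finite sums: I expand $\sum_{i}(a_i+b_i)=\sum_i a_i+\sum_i b_i$ and multiply by $\frac1m$. No sign assumption is needed, which matches the statement.

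For (2) I will use the superadditivity of the geometric mean, i.e. the Minkowski-type inequality for products. If some $a_i+b_i=0$, then $a_i=b_i=0$ and both sides vanish. Likewise, if $\prod a_i=0$ (or $\prod b_i=0$), the inequality reduces to $\bigl(\prod(a_i+b_i)\bigr)^{1/m}\ge\bigl(\prod b_i\bigr)^{1/m}$, which follows from $a_i+b_i\ge b_i\ge0$ and the monotonicity of the product and of the $m$-th root.

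So I may assume all $a_i+b_i>0$ and divide the claimed inequality by $\bigl(\prod(a_i+b_i)\bigr)^{1/m}$. It then becomes
\[ \left(\prod_{i=0}^{m-1}\frac{a_i}{a_i+b_i}\right)^{\frac1m}+\left(\prod_{i=0}^{m-1}\frac{b_i}{a_i+b_i}\right)^{\frac1m}\le1. \]
I apply the AM--GM inequality to each of the two geometric means separately. This bounds the left side by
\[ \frac1m\sum_i\frac{a_i}{a_i+b_i}+\frac1m\sum_i\frac{b_i}{a_i+b_i}=\frac1m\sum_i1=1. \]
The main obstacle of the whole lemma is this normalisation step, together with the degenerate cases where division is impossible; once normalised, AM--GM closes the argument.

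For (3), with all $a_i,b_i>0$, I use the superadditivity of the harmonic mean. Setting $A=\sum_i\frac1{a_i}$ and $B=\sum_i\frac1{b_i}$, the claim becomes
\[ \sum_i\frac1{a_i+b_i}\le\frac{1}{\frac1A+\frac1B}=\frac{AB}{A+B}. \]
The plan is the Cauchy--Schwarz (Titu) inequality. For positive weights $\lambda,\mu$ with $\lambda+\mu=1$ we have $\frac{(\lambda+\mu)^2}{a_i+b_i}\le\frac{\lambda^2}{a_i}+\frac{\mu^2}{b_i}$, so $\frac1{a_i+b_i}\le\frac{\lambda^2}{a_i}+\frac{\mu^2}{b_i}$. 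Summing over $i$ gives $\sum_i\frac1{a_i+b_i}\le\lambda^2A+\mu^2B$. I then choose $\lambda=\frac{B}{A+B}$ and $\mu=\frac{A}{A+B}$, which minimises the right side to $\frac{AB^2+BA^2}{(A+B)^2}=\frac{AB}{A+B}$. Taking reciprocals and multiplying by $m$ yields (3).
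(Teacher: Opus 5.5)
Your proof is correct, but for parts (2) and (3) it takes a genuinely different route from the paper; part (1) is the same linearity argument.

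For (2), the paper does not cite AM--GM. It runs a Cauchy-style forward--backward induction on the superadditivity statement itself. It passes from $m=k$ to $m=2k$ via the two-block estimate $(a+b)(c+d)\ge(\sqrt{ac}+\sqrt{bd})^2$, and from $m=k$ to $m=k-1$ by inserting the geometric means of the first $k-1$ entries as $a_{k-1}$ and $b_{k-1}$.

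For (3), the paper uses ordinary induction on $m$. It collapses the first $k$ terms into $a=\bigl(\sum_{i<k}a_i^{-1}\bigr)^{-1}$ and $b=\bigl(\sum_{i<k}b_i^{-1}\bigr)^{-1}$. The step then reduces to the four-variable inequality $\bigl(\frac{1}{a+b}+\frac{1}{c+d}\bigr)^{-1}\ge\bigl(\frac1a+\frac1c\bigr)^{-1}+\bigl(\frac1b+\frac1d\bigr)^{-1}$, which it verifies by a long chain of rearrangements starting from $(bc-ad)^2\ge0$.

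Your normalisation plus AM--GM, and your Engel-form Cauchy--Schwarz with the optimal weights $\lambda=B/(A+B)$, $\mu=A/(A+B)$, are non-inductive and much shorter. They also make the equality cases transparent: equality forces $(a_i)$ and $(b_i)$ to be proportional, apart from the trivial degenerate case in (2). The price is that you import AM--GM and Cauchy--Schwarz as black boxes. The paper's argument is self-contained and elementary, since its forward--backward step is essentially Cauchy's proof of AM--GM re-run on this inequality, but it is longer and computation-heavy.

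Your explicit handling of vanishing $a_i+b_i$ in (2) is also tidier than the paper's backward step. That step divides by $(a_{k-1}+b_{k-1})^{1/k}$ without noting that the case where this vanishes is trivial.
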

\begin{proof}
\ \\[2ex]
(1) : \quad The equation results from linearity. \\
\ \\[-1ex]
(2) : \quad We proof the inequality by forward-backward induction.

Let  $m\in\mN$ and $a_i,b_i\in\mR_{\ge0}$.
\ \\[2ex]
Base case : $m=1$, \ i.e.  $a_0+b_0\ge a_0+b_0$.
\ \\[2ex]
Forward step : $m=k \ \longrightarrow \ m=2\mdot k$. \\[-2ex]
\[ \text{We set}\qquad a=\left(\prod_{i=0}^{k-1}a_i\right)^{\frac{1}{k}}, \ b=\left(\prod_{i=0}^{k-1}b_i\right)^{\frac{1}{k}}, \ c=\left(\prod_{i=k}^{2k-1}a_i\right)^{\frac{1}{k}}, \text{ and } d=\left(\prod_{i=k}^{2k-1}b_i\right)^{\frac{1}{k}}. \\
 \]
By assumption, we know \\[-1ex]
\[ \left(\prod_{i=0}^{k-1}(a_i+b_i)\right)^{\frac{1}{k}} \ge a+b\ge0 \quad\text{and also}\quad
\left(\prod_{i=k}^{2k-1}(a_i+b_i)\right)^{\frac{1}{k}} \ge c+d\ge0. \]
Then \\[-4ex]
\begin{align*}
\left(\prod_{i=0}^{2k-1}(a_i+b_i)\right)^{\frac{1}{k}}
&= \left(\prod_{i=0}^{k-1}(a_i+b_i)\right)^{\frac{1}{k}} \mdot \left(\prod_{i=k}^{2k-1}(a_i+b_i)\right)^{\frac{1}{k}}& \\
&\ge \big(a+b\big)\mdot \big(c+d\big) \\
&= a\mdot c+a\mdot d+b\mdot c+b\mdot d \\
&= a\mdot c+2\mdot\big(a\mdot c\mdot b\mdot d\big)^{\frac{1}{2}}+b\mdot d\ +\ a\mdot d-2\mdot\big(a\mdot d\mdot b\mdot c\big)^{\frac{1}{2}}+b\mdot c \\
&= \left(\big(a\mdot c\big)^{\frac{1}{2}} + \big(b\mdot d\big)^{\frac{1}{2}}\right)^2 +\left(\big(a\mdot d\big)^{\frac{1}{2}} - \big(b\mdot c\big)^{\frac{1}{2}}\right)^2 \\
&\ge \left(\big(a\mdot c\big)^{\frac{1}{2}} + \big(b\mdot d\big)^{\frac{1}{2}}\right)^2.
\end{align*}

The assertion follows because $a\mdot c\ge0$ and $b\mdot d\ge0$. \\[-1ex]
\[ \left(\prod_{i=0}^{2k-1}\bigl(a_i+b_i\bigr)\right)^{\frac{1}{2k}} \ge (a\mdot c)^{\frac{1}{2}} + (b\mdot d)^{\frac{1}{2}} = \left(\prod_{i=0}^{2 k-1}a_i\right)^{\frac{1}{2k}}+\left(\prod_{i=0}^{2k-1}b_i\right)^{\frac{1}{2k}}. \hspace*{22.5mm} \]
\ \\[2ex]
Backward step : $m=k>1 \ \longrightarrow \ m=k-1$. \\

Let the alleged inequality hold for $m=k$, i.e. for arbitrary $a_i,b_i\in\mR{\ge0}$. Then it also holds for
\[ a_{k-1}=\left(\prod_{i=0}^{k-2}a_i\right)^{\frac{1}{k-1}} \quad\text{ and }\quad\ b_{k-1}=\left(\prod_{i=0}^{k-2}b_i\right)^{\frac{1}{k-1}}. \]
With these assumptions, we get \\[-3ex]
\begin{align*}
\left(\prod_{i=0}^{k-1}a_i\right)^{\frac{1}{k}}+\left(\prod_{i=0}^{k-1}b_i\right)^{\frac{1}{k}}  &= \left(\prod_{i=0}^{k-2}a_i \mdot \left(\prod_{i=0}^{k-2}a_i\right)^{\frac{1}{k-1}}\right)^{\frac{1}{k}} + \left(\prod_{i=0}^{k-2}b_i \mdot \left(\prod_{i=0}^{k-2}b_i\right)^{\frac{1}{k-1}}\right)^{\frac{1}{k}} \\
&= \left(\prod_{i=0}^{k-2}a_i\right)^{\frac{1}{k-1}} +\left(\prod_{i=0}^{k-2}b_i\right)^{\frac{1}{k-1}} = a_{k-1}+b_{k-1}
\end{align*}
and thus, \\[-2ex]
\begin{align*}
\left(\prod_{i=0}^{k-1}\bigl(a_i+b_i\bigr)\right)^{\frac{1}{k}} &= 
\left(\prod_{i=0}^{k-2}\bigl(a_i+b_i\bigr)\right)^{\frac{1}{k}} \mdot \bigl(a_{k-1}+b_{k-1}\bigr)^{\frac{1}{k}} \\ &\ge \left(\prod_{i=0}^{k-1}a_i\right)^{\frac{1}{k}}+\left(\prod_{i=0}^{k-1}b_i\right)^{\frac{1}{k}} = a_{k-1}+b_{k-1} . \hspace*{8.25mm}
\end{align*}
The assertion follows by rearrangement. \\[-2ex]
\begin{align*}
\left(\prod_{i=0}^{k-2}\bigl(a_i+b_i\bigr)\right)^{\frac{1}{k}} \mdot \bigl(a_{k-1}+b_{k-1}\bigr)^{\frac{1}{k}} &\ge a_{k-1}+b_{k-1} \\
\left(\prod_{i=0}^{k-2}\bigl(a_i+b_i\bigr)\right)^{\frac{1}{k}} &\ge \bigl(a_{k-1}+b_{k-1}\bigr)^{\frac{k-1}{k}} \ge0  \hspace*{11mm} \\
\left(\left(\prod_{i=0}^{k-2}\bigl(a_i+b_i\bigr)\right)^{\frac{1}{k}}\right)^{\frac{k}{k-1}} &\ge \left(\bigl(a_{k-1}+b_{k-1}\bigr)^{\frac{k-1}{k}}\right)^{\frac{k}{k-1}} \\
\left(\prod_{i=0}^{k-2}\bigl(a_i+b_i\bigr)\right)^{\frac{1}{k-1}} &\ge a_{k-1}+b_{k-1} .
\end{align*}

(3) : \quad Proof by induction.  Henceforth, we require positive $a_i,b_i$. \\
\ \\
Base case : $m=1$, \ i.e. $ \displaystyle{\frac{1}{\frac{1}{a_0+b_0}} \ge \frac{1}{\frac{1}{a_0}} + \frac{1}{\frac{1}{b_0}}} $ or equivalently $a_0+b_0\ge a_0+b_0$. \\
\ \\[1ex]
Induction step : $m=k \ \longrightarrow \ m=k+1$. \\[-3ex]
\begin{align*}
\text{Given}\qquad\frac{k}{\sum_{i=0}^{k-1}\frac{1}{a_i+b_i}} &\ge \frac{k}{\sum_{i=0}^{k-1}\frac{1}{a_i}} + \frac{k}{\sum_{i=0}^{k-1}\frac{1}{b_i}}\qquad\text{, we set}\hspace{50mm} \\
a=\frac{1}{\sum_{i=0}^{k-1}\frac{1}{a_i}},\ b&=\frac{1}{\sum_{i=0}^{k-1}\frac{1}{b_i}},\ c=a_k,\ d=b_k. \\
\text{Therefore, }\hspace{4mm} \frac{1}{\sum_{i=0}^{k-1}\frac{1}{a_i+b_i}} &\ge a+b = \frac{1}{\frac{1}{a+b}}\ , \\
\sum_{i=0}^{k-1}\frac{1}{a_i+b_i} &\le \frac{1}{a+b}\ , \\
\sum_{i=0}^{k}\frac{1}{a_i+b_i} &= \sum_{i=0}^{k-1}\frac{1}{a_i+b_i}+\frac{1}{a_k+b_k} \le \frac{1}{a+b}+\frac{1}{c+d}\ , \quad\text{and} \hspace{50mm}\\
\frac{1}{\sum_{i=0}^{k}\frac{1}{a_i+b_i}} &\ge \frac{1}{\frac{1}{a+b}+\frac{1}{c+d}}.
\end{align*}
\ \\
The assertion follows by rearranging quadratic terms.
{\small\begin{align*}
(b\mdot c-a\mdot d)^2 &\ge 0\ , \\
b^2\mdot c^2 + a^2\mdot d^2 &\ge 2\mdot a\mdot b\mdot c \mdot d\ , \\
\frac{b\mdot c}{a\mdot d}+\frac{a\mdot d}{b\mdot c} &\ge 2\ , \\
2+\frac{c}{b}+\frac{d}{b}+\frac{c}{a}+\frac{d}{a}+\frac{c}{d}+\frac{b\mdot c}{a\mdot d}&+\frac{b}{a}
+\frac{a}{b}+\frac{a\mdot d}{c\mdot b}+\frac{d}{c}+\frac{a}{d}+\frac{a}{c}+\frac{b}{d}+\frac{b}{c} \\
&\ge 4+\frac{a}{b}+\frac{a}{c}+\frac{a}{d}+\frac{b}{a}+\frac{b}{c}+\frac{b}{d}+\frac{c}{a}+\frac{c}{b}+\frac{c}{d}+\frac{d}{a}+\frac{d}{b}+\frac{d}{c}\ , \\
\left(\frac{1}{a\mdot b}+\frac{1}{a\mdot d}+\frac{1}{c\mdot b}+\frac{1}{c\mdot d}\right)
&\mdot(a\mdot c+a\mdot d+b\mdot c+b\mdot d) \\
&\ge (a+b+c+d)\mdot\left(\frac{1}{a}+\frac{1}{b}+\frac{1}{c}+\frac{1}{d}\right)\ , \\
\left(\frac{1}{a}+\frac{1}{c}\right)\mdot\left(\frac{1}{b}+\frac{1}{d}\right)
&\mdot(a+b)\mdot(c+d) \\
&\ge \left(\frac{(a+b)\mdot(c+d)}{a+b}+\frac{(a+b)\mdot(c+d)}{c+d}\right)\mdot\left(\frac{1}{a}+\frac{1}{b}+\frac{1}{c}+\frac{1}{d}\right)\ ,
\end{align*}}

{\small\begin{align*}
\left(\frac{1}{a}+\frac{1}{c}\right)\mdot\left(\frac{1}{b}+\frac{1}{d}\right) &\ge \left(\frac{1}{a+b}+\frac{1}{c+d}\right)\mdot\left(\frac{1}{a}+\frac{1}{b}+\frac{1}{c}+\frac{1}{d}\right)\ ,\hspace*{16.3mm} \\
\frac{\left(\frac{1}{a}+\frac{1}{c}\right)\mdot\left(\frac{1}{b}+\frac{1}{d}\right)}{\frac{1}{a+b}+\frac{1}{c+d}} &\ge \frac{\left(\frac{1}{a}+\frac{1}{c}\right)\mdot\left(\frac{1}{b}+\frac{1}{d}\right)}{\frac{1}{a}+\frac{1}{c}} + \frac{\left(\frac{1}{a}+\frac{1}{c}\right)\mdot\left(\frac{1}{b}+\frac{1}{d}\right)}{\frac{1}{b}+\frac{1}{d}}\ , \\
\frac{1}{\frac{1}{a+b}+\frac{1}{c+d}} &\ge \frac{1}{\frac{1}{a}+\frac{1}{c}} + \frac{1}{\frac{1}{b}+\frac{1}{d}}\ .
\end{align*}}
\!Finally, we get \\[-2ex]
{\small\begin{align*}
\frac{1}{\sum_{i=0}^{k}\frac{1}{a_i+b_i}} &\ge \frac{1}{\frac{1}{a+b}+\frac{1}{c+d}} \ge \frac{1}{\frac{1}{a}+\frac{1}{c}} + \frac{1}{\frac{1}{b}+\frac{1}{d}} =
\frac{1}{\sum_{i=0}^{k-1}\frac{1}{a_i}+\frac{1}{a_k}} + \frac{1}{\sum_{i=0}^{k-1}\frac{1}{b_i}+\frac{1}{b_k}} \hspace{3mm} \\
\text{\normalsize and thus}, \hspace{10mm}
\frac{k+1}{\sum_{i=0}^{k}\frac{1}{a_i+b_i}} &\ge \frac{k+1}{\sum_{i=0}^{k}\frac{1}{a_i}} + \frac{k+1}{\sum_{i=0}^{k}\frac{1}{b_i}} . \qedhere
\end{align*}}
\end{proof}
\ \\[2ex]

From this lemma, we deduce the relationships between the considered pseudo-\lb metrics. Although Lemma~\refs{pseudometric_Z_m} holds for arbitrary pseudometrics $d$ on $\mZ$, we recall that in this context we only further investigate $1$-centred pseudometrics on $\mZ$.

\begin{theo} \label{compressed_pseudometrics_Z_m}
Let  $m\in\mN$, $d$ be a pseudometric on $\mZ$, and $\seq x,\ \seq y\in\mZ^m$ where either\lb $x_i=y_i$ or $x_i\ne y_i$ hold for all $0\le i\le m-1$. Furthermore, let $f\in\msG$ with\lb $d(x_i,y_i)=f(x_i)+f(y_i)$ \ for $x_i\ne y_i$. Then \\[-3ex]
\begin{align*}
(1)\hspace*{10mm} d^{(m)}_{AM}\bigl(\seq x,\seq y\bigr) &\ \ \,\,=\ \ \,\,\dSfxyp f \msG x y, \\
(2)\hspace*{10.1mm} d^{(m)}_{GM}\bigl(\seq x,\seq y\bigr) &
\ \ \,\,\ge\ \ \,\,\dSfxyt f \msG x y, \text{ \quad and} \\
(3)\hspace*{9.8mm} d^{(m)}_{HM}\bigl(\seq x,\seq y\bigr) &
\ \ \,\,\ge\ \ \,\,\dSfxyr f \msG x y, \\
whereas\hspace*{29mm} \\
(4)\hspace*{11.6mm} \seq x\preceq_{d^{(m)}_{AM}} \seq y &\iff \seq x\preceq_{\dSfp f \msG} \seq y, \\
(5)\hspace*{11.7mm} \seq x\preceq_{d^{(m)}_{GM}} \seq y &\iff \seq x\preceq_{\dSft f \msG} \seq y, \text{ \quad and} \\
(6)\hspace*{11.4mm} \seq x\preceq_{d^{(m)}_{HM}} \seq y &\iff \seq x\preceq_{\dSfr f \msG} \seq y.
\end{align*}
\end{theo}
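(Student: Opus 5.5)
The proof splits into the metric comparisons (1)--(3), handled by a case distinction on the hypothesis about $\seq x,\seq y$, and the preorder equivalences (4)--(6), which follow by evaluating all distances at the centre $(1)$.

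\textbf{Identical sequences.} First suppose $x_i=y_i$ for all $i$, i.e.\ $\seq x=\seq y$. Then $d(x_i,y_i)=0$ for every $i$, so $d^{(m)}_{AM}$, $d^{(m)}_{GM}$ and $d^{(m)}_{HM}$ all vanish. The pseudometrics of Corollary~\refs{induced_pseudometric_preorder_Z_m} are zero by definition for $\seq x=\seq y$. Hence (1)--(3) hold with equality.

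\textbf{Pairwise different sequences.} Now suppose $x_i\ne y_i$ for all $i$. Then $d(x_i,y_i)=f(x_i)+f(y_i)$ for every $i$, and we set $a_i=f(x_i)\ge0$ and $b_i=f(y_i)\ge0$.
\begin{itemize}
\item Statement (1) is Lemma~\refs{means_and_sums}(1) with the definition of $\fp f$ from Lemma~\refs{implied_generating_functions_Z_m}.
\item Statement (2) is Lemma~\refs{means_and_sums}(2), since $\ft f$ is exactly the geometric mean of the $a_i$ resp.\ $b_i$.
\item Statement (3) follows from Lemma~\refs{means_and_sums}(3) only when all $a_i,b_i>0$, so the zero cases need separate checks:
\begin{itemize}
\item If some $a_j+b_j=0$, then $a_j=b_j=0$. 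Hence $d^{(m)}_{HM}=0$ and also $\fxr f x=\fxr f y=0$, so both sides vanish.
\item If all $a_i+b_i>0$ but, say, some $a_j=0$, then $\fxr f x=0$. The claim reduces to showing that the harmonic mean of the $a_i+b_i$ is at least that of the $b_i$. This follows from monotonicity: $\frac{1}{a_i+b_i}\le\frac1{b_i}$ whenever $b_i>0$. If some $b_k=0$ as well, the right-hand side is $0$ and the claim is trivial.
\end{itemize}
\end{itemize}
I expect this bookkeeping of the degenerate cases in (3), caused by the piecewise definition of the harmonic mean, to be the only delicate step.

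\textbf{The preorders (4)--(6).} These need no hypothesis on $\seq x,\seq y$; we only evaluate the distances from the centre $(1)$. For each $i$, $d(1,x_i)=f(x_i)$: if $x_i\ne1$ by the form of $d$ and $f(1)=0$, and if $x_i=1$ because $d(1,1)=0=f(1)$. Therefore
\[ d^{(m)}_{AM}\bigl((1),\seq x\bigr)=\fxp f x,\quad d^{(m)}_{GM}\bigl((1),\seq x\bigr)=\fxt f x,\quad d^{(m)}_{HM}\bigl((1),\seq x\bigr)=\fxr f x, \]
which is precisely the first part of Lemma~\refs{implied_generating_functions_Z_m}. Comparing with the definitions in Lemma~\refs{preorder_Z_m} and Corollary~\refs{induced_pseudometric_preorder_Z_m}, both preorders in each of (4)--(6) are defined by the same inequality between the same real numbers. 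So they coincide.
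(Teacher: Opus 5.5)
Your proposal is correct and follows essentially the same route as the paper. The identical case is trivial, (1)--(3) come from Lemma~\ref{means_and_sums} with $a_i=f(x_i)$, $b_i=f(y_i)$ plus a case split on vanishing values for the harmonic mean, and (4)--(6) follow by identifying the distances from $(1)$ with the generating functions of Lemma~\ref{implied_generating_functions_Z_m}; your split for (3), organised by whether a zero occurs at a common index, covers exactly the same situations as the paper's split into zeros in both sequences, zeros in only one, and all values positive.
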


\begin{proof}
For identical $\seq x$ and $\seq y$, all assertions are trivially true. Now let $\seq x$ and $\seq y$ be pairwise distinct.

The considered pseudometrics can be expressed as terms of values of generating functions.

According to Lemma~\refs{pseudometric_Z_m}, we get
\begin{align*}
d^{(m)}_{AM}\bigl(\seq x,\seq y\bigr) &= \frac{1}{m}\sum_{i=0}^{m-1}\bigl(f(x_i)+f(y_i)\bigr), \\
d^{(m)}_{GM}\bigl(\seq x,\seq y\bigr) &= \left(\prod_{i=0}^{m-1}\bigl(f(x_i)+f(y_i)\bigr)\right)^{\frac{1}{m}}, \text{ \quad and} \\
d^{(m)}_{HM}\bigl(\seq x,\seq y\bigr) &= {\footnotesize\begin{cases}
	\displaystyle{\frac{m}{\sum_{i=0}^{m-1}\frac{1}{f(x_i)+f(y_i)}}} & \text{if } f(x_i)+f(y_i)>0 \text{ for all pairs } (x_i,y_i), \\
	0 & \text{otherwise}. \end{cases}}
\end{align*}
By Lemma~\refs{implied_generating_functions_Z_m} and Corollary~\refs{induced_pseudometric_preorder_Z_m}, we have
\begin{align*}
\dSfxyp f \msG x y &= \frac{1}{m}\sum_{i=0}^{m-1}f\seq x + \frac{1}{m}\sum_{i=0}^{m-1}f\seq y, \\
\dSfxyt f \msG x y &= \left(\prod_{i=0}^{m-1}f\seq x\right)^{\frac{1}{m}} + \left(\prod_{i=0}^{m-1}f\seq y\right)^{\frac{1}{m}}, \text{ \quad and} \\
\dSfxyr f \msG x y &= {\footnotesize\begin{cases}
	\displaystyle{\frac{m}{\sum_{i=0}^{m-1}\frac{1}{f\seq x}}} & \text{if } f\seq x>0 \text{ for all } x_i, \\
	0 & \text{otherwise} \end{cases}
+ \begin{cases}
	\displaystyle{\frac{m}{\sum_{i=0}^{m-1}\frac{1}{f\seq y}}} & \text{if } f\seq y>0 \text{ for all } y_i, \\
	0 & \text{otherwise}. \end{cases}}
\end{align*}
\ \\
(1), (2), (3) : \quad If we set $a_i=f(x_i)$ and $b_i=f(y_i)$, the assertions correspond to those of Lemma~\refs{means_and_sums}. Thus, the statements (1) and (2) are proved by the lemma because $f(x_i)\ge0$ and $f(y_i)\ge0$ by definition. 

For statement (3), we distinguish three cases. If there exist $x_j$ and $y_k$ such that $f(x_j)=f(x_k)=0$, then the alleged inequality is reduced to $\dSfxyr f \msG x y\ge0$ which is true by definition. Let w.l.o.g. be an arbitrary $f(x_i)=0$ and all $f(y_i)>0$. The remaining inequality
$\displaystyle{\frac{m}{\sum_{i=0}^{m-1}\frac{1}{f(x_i)+f(y_i)}}} \ge \displaystyle{\frac{m}{\sum_{i=0}^{m-1}\frac{1}{f\seq y}}}$ holds since $f(x_i)\ge0$ and $f(y_i)>0$. Finally, if all $f(x_i),f(y_i)>0$, then the assertions (3) of this proposition\lb and of Lemma~\refs{means_and_sums} coincide. \\
\ \\
(4), (5), (6) : \quad Summarising Lemmata~\refs{implied_generating_functions_Z_m} and \refs{preorder_Z_m}, we conclude
\begin{align*}
\seq x\preceq_{d^{(m)}_{AM}} \seq y &\iff \fxp f x \le \fxp f y, \\
\seq x\preceq_{d^{(m)}_{GM}} \seq y &\iff \fxt f x \le \fxt f y\text{, \quad and} \\
\seq x\preceq_{d^{(m)}_{HM}} \seq y &\iff \fxr f x \le \fxr f y,
\end{align*}
which corresponds exactly to the representation for the other pseudometrics in\lb Corollary~\refs{induced_pseudometric_preorder_Z_m}.
\end{proof}

Propositions~\refs{isomorphic_linear}, \refs{function_triple} and Corollary~\refs{homomorphic} have demonstrated the existence of\lb preorder-preserving isomorphisms between the set of generating functions and\lb their supplements or suitable subsets thereof in general. We apply these results to the considered pseudometrics and preorders on $\mZ^m$ according to Corollary~\refs{induced_pseudometric_preorder_Z_m} and define $\msGm_1 = \{f^{(m)}\in\msGm:f^{(m)}(\seq x)\le1\}$ and $\msT^{(m)}_0 = \{h^{(m)}\in\msT:h(\seq x)>0\}$. This results in compatible preorder-preserving isomorphisms on $\mZ^m$.

We emphasise that the same basis functions according to Definition~\refs{generating_functions_Z} induce\lb appropriate pseudometrics and preorders both on $\mZ^m$ and hence on $\mZ$ itself.\lb Moreover, there are also triples of functions $\fp f \in\msGm$, $\ft g\in\msQm$, and\lb $\ft h\in\msTmO$ with analogous properties as described in Corollary~\refs{function_triple}. \\[-2ex]

\begin{prop} \label{isomorphic_linear_Z_m}
Let $m\in\mN$ and $\seq x,\ \seq y\in\mZ^m$. Then \\[-1ex]
\[ \msGm\quad\simeq\quad\msQm \qquad\text{and}\qquad \msGm_1\quad\simeq\quad\msTm. \]
For each $f\in\msG$, there exists $g\in\msQ$, and for each $f_1\in\msG_1 = \{f\in\msG:f(x)\le1\}$, there exists $h\in\msT$ and vice versa, such that the induced preorders are pairwise equivalent on $\mZ^m$, i.e. 
\[ \seq x\preceq_{\dSfp f \msG} \seq y \iff \seq x\preceq_{\dSfp g \msQ} \seq y \quad\text{and}\quad \seq x\preceq_{\dSfp {f_1} {\msG_1}}\seq y \iff \seq x\preceq_{\dSfp h \msT} \seq y.\]
\end{prop}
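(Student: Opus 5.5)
The plan is to reduce everything to the linear transformations already used in Proposition~\refs{isomorphic_linear} and to check that they commute with the arithmetic mean. Given $f\in\msG$, set $g=f+1$. Then $g\in\msQ$, since $g-1=f\in\msG$. Conversely, every $g\in\msQ$ yields $f=g-1\in\msG$, so the map $f\mapsto f+1$ is a bijection $\msG\to\msQ$. Likewise, for $f_1\in\msG_1$ set $h=1-f_1$. Because $0\le f_1\le1$ and $f_1(1)=0$, we get $0\le h\le 1$ and $h(1)=1$, so $h\in\msT$. Conversely, $1-h\in\msG_1$ for every $h\in\msT$ by Lemma~\refs{generating_supplements}. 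These are exactly the bijections of Proposition~\refs{isomorphic_linear} applied with $\mS=\mZ$ and $o=1$.

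Next I transfer these bijections to $\mZ^m$ through the arithmetic-mean construction of Lemma~\refs{implied_generating_functions_Z_m}. By linearity of the mean (Lemma~\refs{means_and_sums}(1), with $b_i$ constant), for every $\seq x\in\mZ^m$ we have
\[ \fxp g x=\frac1m\sum_{i=0}^{m-1}\bigl(f(x_i)+1\bigr)=\fxp f x+1 \quad\text{and}\quad \fxp h x=1-\fxp {f_1} x. \]
Hence $\fp g=\fp f+1$ and $\fp h=1-\fp{f_1}$. At the level of $\msGm,\msQm,\msGm_1,\msTm$, the maps $F\mapsto F+1$ and $F\mapsto 1-F$ are bijective by the same argument as before, now applied with $\mS=\mZ^m$ and $o=(1)$. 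This is Proposition~\refs{isomorphic_linear} in that setting, and it gives the isomorphisms $\msGm\simeq\msQm$ and $\msGm_1\simeq\msTm$. I also check that $\fp{f_1}\le1$, which holds because it is a mean of values in $[0,1]$; so $\fp{f_1}$ lies in $\msGm_1$ and the two constructions are compatible.

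For the preorders I use Corollary~\refs{induced_pseudometric_preorder_Z_m}. In the first case,
\[ \seq x\preceq_{\dSfp g \msQ}\seq y\iff \fxp g x\le\fxp g y\iff \fxp f x+1\le \fxp f y+1\iff \seq x\preceq_{\dSfp f \msG}\seq y. \]
In the second case,
\[ \seq x\preceq_{\dSfp h \msT}\seq y\iff \fxp h x\ge \fxp h y\iff 1-\fxp{f_1}x\ge 1-\fxp{f_1}y\iff \fxp{f_1}x\le\fxp{f_1}y. \]
The last condition is $\seq x\preceq_{\dSfp {f_1} {\msG_1}}\seq y$.

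The only point needing care is that the statement restricts to the arithmetic mean, and that restriction is essential: it is the one mean that commutes with the affine maps $t\mapsto t+1$ and $t\mapsto 1-t$. For the geometric and harmonic means, $\ft g$ is not $\ft f+1$, so those cases would need a separate monotonicity argument. No such argument is required here. Apart from verifying the codomain conditions above, the remainder is bookkeeping.
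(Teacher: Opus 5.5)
Your proposal is correct and follows essentially the same route as the paper. Both use the affine bijections $g=f+1$ and $h=1-f_1$ on $\mZ$, transfer them to $\mZ^m$ by linearity of the arithmetic mean ($\fp g=\fp f+1$, $\fp h=1-\fp{f_1}$), and then read off the preorder equivalences from Corollary~\ref{induced_pseudometric_preorder_Z_m}; your computation $\frac{1}{m}\sum_{i}\bigl(1-f_1(x_i)\bigr)=1-\fxp{f_1}{x}$ is in fact cleaner than the paper's displayed line, which has a sign slip ($1+h$ in place of $1-h$).
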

\begin{proof}
Let $f,f_1\in\msG$. We set $g=f+1$ and $h=1-f_1$. Then $g\in\msQ$ and $h\in\msT$ by Lemma~\refs{generating_supplements}. The transformations are bijective and strictly monotonic. The equivalence of the corresponding preorders on $\mZ$ follows by Corollary~\refs{induced_preorder_other} and Proposition~\refs{isomorphic_linear}.

By Lemma~\refs{implied_generating_functions_Z_m}, 
$\fp f\in\msGm$, $\fp g\in\msQm$, $\fp {f_1}\in\msGm_1$, $\fp h\in\msTm$,\lb and furthermore, \\[-3ex]
\begin{align*}
\fxp f x &= \frac{1}{m}\sum_{i=0}^{m-1}f\seq x = \frac{1}{m}\sum_{i=0}^{m-1}(g\seq x-1) = 
	\fxp g x-1 \text{\quad and} \\
\fxp {f_1} x &= \frac{1}{m}\sum_{i=0}^{m-1}{f_1}\seq x = \frac{1}{m}\sum_{i=0}^{m-1}(1+h\seq x) = 
	1-\fxp h x .
\end{align*}
The transformations $\fp g=\fp f+1$ and $\fp {f_1}=1-\fp h$ are also bijective\lb and strictly monotonic. The equivalence of the preorders on $\mZ^m$ follows here by\lb Corollary~\refs{induced_pseudometric_preorder_Z_m}.
\end{proof}
\ \\[-5ex]

\begin{theo} \label{equivalent_preorder_tuple_Z_m}
Let $m\in\mN$ and $\seq x,\ \seq y\in\mZ^m$. Then \\[-1ex]
\[ \msGm\quad\simeq\quad\msQm\simeq\quad\msT^{(m)}_0. \]
There exist triples of functions $f\in\msG$, $g\in\msQ$, and $h\in\msT_0 = \{h\in\msT:h(x)>0\}$ such that the corresponding triples of functions $\fp f \in\msGm$, $\ft g\in\msQm$, and $\ft h\in\msTmO$ induce pairwise equivalent preorders on $\mZ^m$, i.e.
\[\seq x\preceq_{\dSfp f \msG} \seq y \iff \seq x\preceq_{\dSft g \msQ} \seq y \iff \seq x\preceq_{\dSft  h {\msT_0}} \seq y.\]
\end{theo}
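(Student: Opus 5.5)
Given $f\in\msG$, I will take the exponential-like transformations of Proposition~\refs{function_triple}, namely $g=\e^{\ f}$ and $h=\e^{-f}$. By Proposition~\refs{function_triple} (applied with $\mS=\mZ$, $o=1$) these lie in $\msQ$ and $\msT_0$. The correspondence is bijective: conversely $f=\ln g=-\ln h$, and for $g\in\msQ$ the function $h=\frac1g$ lies in $\msT_0$. Lemma~\refs{implied_generating_functions_Z_m} then gives $\fp f\in\msGm$, $\ft g\in\msQm$, and $\ft h\in\msTmO$. Membership in $\msTmO$ holds because $\ft h$ is a geometric mean of strictly positive values, hence is positive and at most $1$, and it equals $1$ at $(1)$.

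The key step is the observation that the arithmetic mean of $f$ is turned into the geometric mean of $g$ by the exponential. For $\seq x\in\mZ^m$ we have
\[ \fxt g x=\left(\prod_{i=0}^{m-1}\e^{\ f(x_i)}\right)^{\frac1m}=\e^{\frac1m\sum_{i=0}^{m-1}f(x_i)}=\e^{\ \fxp f x}, \]
and likewise $\fxt h x=\e^{-\fxp f x}=1/\fxt g x$. So the triple $(\fp f,\ft g,\ft h)$ on $\mZ^m$ has exactly the shape $(F,\e^{F},\e^{-F})$ used in Proposition~\refs{function_triple}, now with $\mS=\mZ^m$ and centre $(1)$. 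This yields the claimed isomorphisms $\msGm\simeq\msQm\simeq\msTmO$, at least restricted to the functions arising from basis functions. For arbitrary elements of $\msGm$ one invokes Proposition~\refs{function_triple} directly with $\mS=\mZ^m$.

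The equivalence of preorders then follows from Corollary~\refs{induced_pseudometric_preorder_Z_m}. There $\seq x\preceq_{\dSfp f \msG}\seq y$ iff $\fxp f x\le\fxp f y$, which by strict monotonicity of $\e^{t}$ holds iff $\fxt g x\le\fxt g y$, and by strict monotonicity of $\e^{-t}$ holds iff $\fxt h x\ge\fxt h y$. These are precisely the defining conditions of $\preceq_{\dSft g \msQ}$ and $\preceq_{\dSft h {\msT_0}}$. The main obstacle is only interpretive rather than technical. One must note that the isomorphism statement on $\mZ^m$ is meant in the sense of Proposition~\refs{function_triple} for the general space $\mZ^m$. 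The genuinely new content is that the induced functions from a single basis function match arithmetic with geometric means, which the displayed identity settles.
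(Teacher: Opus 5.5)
Your proposal is correct and follows essentially the same route as the paper: set $g=\e^{\ f}$ and $h=\e^{-f}$, verify $\ft g=\e^{\ \fp f}$ and $\ft h=\e^{-\fp f}$ by turning the product of exponentials into the exponential of the arithmetic mean, and deduce the equivalence of the three preorders from Corollary~\ref{induced_pseudometric_preorder_Z_m} via strict monotonicity of the exponential. You also note that the full isomorphism $\msGm\simeq\msQm\simeq\msTmO$ comes from Proposition~\ref{function_triple} applied to $\mS=\mZ^m$ with centre $(1)$, not from the basis-function correspondence alone, which makes explicit a point the paper leaves implicit.
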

\begin{proof}
Let $f\in\msG$. As in Proposition~\refs{function_triple}, we set $g=\e^{\ f}\in\msQ$, $h=\e^{-f}\in\msT_0$, and thus, $h=\frac 1 g$. The transformations are bijective and strictly monotonic. The equivalence of the corresponding preorders on $\mZ$ follows by Corollary~\refs{induced_preorder_other} and Proposition~\refs{function_triple}. \\[-2ex]

According to the assumptions and Lemma~\refs{implied_generating_functions_Z_m}, we get $\fp f\in\msGm$, $\ft g\in\msQm$, and $\ft h\in\msTmO$. Then
\begin{align*}
\fxp f x &= \frac{1}{m}\sum_{i=0}^{m-1}f\seq x = \ln\left(\fxt g x\right) = -\ln\left(\fxt h x\right), \\
\fxt g x &= \left(\prod_{i=0}^{m-1}g\seq x\right)^{\frac{1}{m}} = \left(\prod_{i=0}^{m-1}\e^{\ f(x_i)}\right)^{\frac{1}{m}} = \e^{\,\frac{1}{m}\sum_{i=0}^{m-1}f\seq x} = \e^{\ \fxp f x}, \text{\quad and} \\
\fxt h x &= \left(\prod_{i=0}^{m-1}h\seq x\right)^{\frac{1}{m}} = \left(\prod_{i=0}^{m-1}\e^{-f(x_i)}\right)^{\frac{1}{m}} = \e^{-\frac{1}{m}\sum_{i=0}^{m-1}f\seq x} = \e^{-\fxp f x}.
\end{align*}

The corresponding transformations $\ft g=\e^{\ \fp f}$, $\ft h=\e^{-\fp f}$, and\lb $\ft h=\frac 1 {\ft g}$ are also bijective and strictly monotonic. The equivalence of the\lb preorders on $\mZ^m$ follows here by Corollary~\refs{induced_pseudometric_preorder_Z_m}.
\end{proof}
\ \\[-3ex]

As a result of the use of different Pythagorean means, there are two additional pairs of functions with similar properties. The related preorders of the function pairs $\fp g\in\msQm$, $\fr h\in\msTmO$ and $\fp h\in\msTmO$, $\fr g\in\msQm$ are also equivalent if $g$ and $h$ are reciprocals of each other. By Proposition~\refs{function_triple} and Theorem~\refs{equivalent_preorder_tuple_Z_m}, in this case $g$ and $h$ always belong to a basic triple, which is completed by the function $f=\ln(g)=-\ln(h)\in\msG$.

However, the remaining two generating functions $\ft f,\fr f\in\msGm$ defined in Lemma~\refs{implied_generating_functions_Z_m} cannot generally be represented as mean values of other basis functions. \\[-1ex]

\begin{theo} \label{equivalent_preorder_pairs_Z_m}
Let $m\in\mN$, $\seq x,\ \seq y\in\mZ^m$, $g\in\msQ$, and $h\in\msT_0$ with $h=\frac{1}{g}$. Then
\[ \msQm\simeq\quad\msT^{(m)}_0 \]
such that
\[ \seq x\preceq_{\dSfp g \msQ} \seq y \iff \seq x\preceq_{\dSfr h {\msT_0}} \seq y \quad\text{and}\quad \seq x\preceq_{\dSfp h {\msT_0}} \seq y \iff \seq x\preceq_{\dSfr g \msQ} \seq y.\]
\end{theo}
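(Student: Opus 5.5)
The plan is to mirror the proofs of Proposition~\refs{isomorphic_linear_Z_m} and Theorem~\refs{equivalent_preorder_tuple_Z_m}: express the harmonic mean of $h$ directly through the arithmetic mean of $g=\frac 1h$, and vice versa. Then the induced preorders coincide by Corollary~\refs{induced_pseudometric_preorder_Z_m}.

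First I check the memberships. Since $g\in\msQ$, we have $g\ge1>0$, so $h=\frac1g$ satisfies $0<h\le1$ and $h(1)=1$, hence $h\in\msT_0$. The map $g\mapsto\frac1g$ is a bijection $\msQ\to\msT_0$ with inverse $h\mapsto\frac1h$, as already used in Proposition~\refs{function_triple}. By Lemma~\refs{implied_generating_functions_Z_m} we have $\fp g,\fr g\in\msQm$ and $\fp h,\fr h\in\msTm$. Because all $h(x_i)>0$, the case distinction in the definition of $\fr h$ never applies to its second branch. Moreover $\fp h>0$ and $\fr h>0$, so these functions actually lie in $\msTmO$.

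The key computation uses only the definitions:
\[ \fxr h x=\frac{m}{\sum_{i=0}^{m-1}\frac{1}{h(x_i)}}=\frac{m}{\sum_{i=0}^{m-1}g(x_i)}=\frac{1}{\fxp g x}, \]
and symmetrically
\[ \fxr g x=\frac{m}{\sum_{i=0}^{m-1}h(x_i)}=\frac{1}{\fxp h x}. \]
Thus $\fr h=\frac{1}{\fp g}$ and $\fr g=\frac{1}{\fp h}$. The map $t\mapsto\frac1t$ is a strictly decreasing bijection of the positive reals, sending $[1,\infty)$ onto $(0,1]$ and fixing $1$. Hence these assignments give the asserted correspondence $\msQm\simeq\msTmO$ on the functions in question, with inverse again given by reciprocals.

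Finally, by Corollary~\refs{induced_pseudometric_preorder_Z_m},
\[ \seq x\preceq_{\dSfp g \msQ}\seq y \iff \fxp g x\le\fxp g y \iff \fxr h x\ge\fxr h y \iff \seq x\preceq_{\dSfr h {\msT_0}}\seq y, \]
since the reciprocal reverses the inequality. The second equivalence follows the same way: $\fxp h x\ge\fxp h y$ holds if and only if $\fxr g x\le\fxr g y$.

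There is no real obstacle. The only delicate point is to ensure positivity, so that the reciprocals and the harmonic means are defined without the zero case. That positivity is exactly why the statement restricts to $\msT_0$ rather than $\msT$.
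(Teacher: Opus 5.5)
Your proposal is correct and takes essentially the same approach as the paper. Both derive $\fr h=\frac{1}{\fp g}$ and $\fr g=\frac{1}{\fp h}$ directly from the definitions, then conclude via Corollary~\ref{induced_pseudometric_preorder_Z_m}, using that reciprocation is a strictly decreasing bijection and so reverses the inequalities. Your explicit positivity check, ensuring these means lie in $\msTmO$ and avoid the zero case of the harmonic mean, spells out what the paper only asserts by citing Lemma~\ref{implied_generating_functions_Z_m}.
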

\pagebreak
\begin{proof}
The transformation $h=\frac 1 g$ is bijective and strictly monotonic. The equivalence of the corresponding preorder on $\mZ$ corresponds to Proposition~\refs{function_triple} and Theorem~\refs{equivalent_preorder_tuple_Z_m}.

According to Lemma~\refs{implied_generating_functions_Z_m}, we know $\fp g,\fr g\in\msQm$ and $\fp h,\fr h\in\msTmO$.
By assumption, we get
\begin{align*}
\fxp g x &= \frac{1}{m}\sum_{i=0}^{m-1}g\seq x = \frac{1}{m}\sum_{i=0}^{m-1}\frac{1}{h\seq x} = \frac{1}{\fxr h x}, \\
\fxp h x &= \frac{1}{m}\sum_{i=0}^{m-1}h\seq x = \frac{1}{m}\sum_{i=0}^{m-1}\frac{1}{g\seq x} = \frac{1}{\fxr g x}.
\end{align*}

The corresponding transformations $\fr h=\frac{1}{\fp g}$ and $\fr g=\frac{1}{\fp h}$ are bijective and strictly monotonic. The assertion again follows from Corollary~\refs{induced_pseudometric_preorder_Z_m}.
\end{proof}
\ \\[-3ex]

Consequently, there exist a preorder-preserving homomorphism on $\msQm$ and\lb another isomorphism between $\msQm$ and $\msTm_0$  which complete the corresponding\lb isomorphisms of Proposition~\refs{isomorphic_linear_Z_m} and the Theorems~\refs{equivalent_preorder_tuple_Z_m} and \refs{equivalent_preorder_pairs_Z_m}. \\[-1ex]

\begin{coro} \label{homomorphic_Z_m} 
Let $m\in\mN$, $\seq x,\ \seq y\in\mZ^m$, and $g\in\msQ$. There exists a homomorphism $\digamma:\msQm\to\msQm$ such that
\[ \seq x\preceq_{\dSfp g \msQ} \seq y  \iff \seq x\preceq_{\dSft {\digamma(g)} {\msQ}} \seq y.\]

Furthermore, again $\msQm\simeq\msTm_0$. For each $g\in\msQm$, there exists another $h\in\msTm_0$\lb and vice versa, such that the induced preorders are pairwise equivalent on $\mZ^m$, i.e.
\[ \seq x\preceq_{\dSfp g \msQ} \seq y \iff \seq x\preceq_{\dSft h {\msT_0}} \seq y.\]
\end{coro}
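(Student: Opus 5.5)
The plan mirrors the proof of Corollary~\refs{homomorphic}, lifted to $\mZ^m$ through Theorems~\refs{equivalent_preorder_tuple_Z_m} and \refs{equivalent_preorder_pairs_Z_m}. Fix $g\in\msQ$. The natural bridge is the generating function $f=g-1\in\msG$. By Proposition~\refs{isomorphic_linear_Z_m}, $\fp f=\fp g-1$, so the preorders induced by $\fp f$ and $\fp g$ coincide on $\mZ^m$.

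Next I set $\digamma(g)=\e^{\,g-1}=\e^{\,f}$. Since $f\in\msG$, Proposition~\refs{function_triple} gives $\digamma(g)\in\msQ$. The inverse map is $g=\ln(\digamma(g))+1$, so $\digamma$ is bijective and strictly monotonic on the basis functions. Applying Theorem~\refs{equivalent_preorder_tuple_Z_m} to the triple $f$, $\e^{\,f}$, $\e^{-f}$ gives
\[ \ft{\digamma(g)}\bigl(\seq x\bigr)=\e^{\ \fxp f x}=\e^{\ \fxp g x-1}. \]
This is a strictly increasing function of $\fxp g x$. The first equivalence, between $\preceq_{\dSfp g \msQ}$ and $\preceq_{\dSft {\digamma(g)} {\msQ}}$, then follows from Corollary~\refs{induced_pseudometric_preorder_Z_m}. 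The homomorphism on $\msQm$ is the induced map $\fp g\mapsto\ft{\digamma(g)}=\e^{\,\fp g-1}$, which is bijective and order-preserving in value. It lands in $\msQm$ because it equals $1$ at $(1)$ and is $\ge1$ elsewhere.

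For the second part I take $h=\e^{\,1-g}=\frac{1}{\digamma(g)}=\e^{-f}$. Proposition~\refs{function_triple} places $h$ in $\msT_0$, and the inverse map is $g=1-\ln(h)$. Theorem~\refs{equivalent_preorder_tuple_Z_m} gives
\[ \fxt h x=\e^{-\fxp f x}=\e^{\,1-\fxp g x}. \]
This is strictly decreasing in $\fxp g x$. Because the $\msT$-preorder in Corollary~\refs{induced_pseudometric_preorder_Z_m} is defined by $\ge$, the decrease yields exactly $\preceq_{\dSfp g \msQ}\iff\preceq_{\dSft h {\msT_0}}$. The correspondence $\fp g\leftrightarrow\ft h$ is bijective, which gives $\msQm\simeq\msTm_0$.

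The only delicate point is bookkeeping rather than a real obstacle. The statement mixes basis functions $g\in\msQ$ with their lifts in $\msQm$, so I must state clearly that $\digamma$ is defined on basis functions and transported to the lifts $\fp g\mapsto\ft{\digamma(g)}$. I also need to check membership: $\e^{\,\fp g-1}\ge1$ with value $1$ at $(1)$, and $0<\e^{\,1-\fp g}\le1$. Once that is in place, strict monotonicity of $\exp$ carries every equivalence, exactly as in Corollary~\refs{homomorphic}.
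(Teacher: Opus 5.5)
Your proposal is correct and follows essentially the same route as the paper. Both use $\digamma(g)=\e^{\,g-1}$ and $h=\e^{\,1-g}=\frac{1}{\digamma(g)}$, pass through $f=g-1$ via Proposition~\ref{isomorphic_linear_Z_m}, and then apply Theorem~\ref{equivalent_preorder_tuple_Z_m} together with the strict monotonicity of $\exp$; your explicit identities $\ft{\digamma(g)}=\e^{\,\fp g-1}$ and $\ft h=\e^{\,1-\fp g}$ just make concrete the chain of equivalences the paper writes via $\fp{(g-1)}$ and $\fp{\ln(\digamma(g))}$.
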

\begin{proof}
The morphisms $\digamma(g)=\e^{\ g-1}$ and $h=\e^{\ 1-g}$, i.e.\\ $g=\ln(\digamma(g))+1=1-\ln(h)$ and $h=\frac{1}{\digamma(g)}$, meet the requirements. They are\lb bijective and strictly monotonic. The equivalence of the corresponding preorders\lb on $\mZ$ corresponds to Corollary~\refs{homomorphic}. So, $g-1\in\msG$, $\digamma(g)\in\msQ$, and $\frac{1}{\digamma(g)}\in\msT_0$. According to Lemma~\refs{implied_generating_functions_Z_m}, we have $\fp {(g-1)},\ \fp {\ln(\digamma(g))}\in\msGm$, $\fp g,\ \ft {\digamma(g)}\in\msQm$, and $\ft {\frac{1}{\digamma(g)}}\in{\msTm_0}$.

The asserted equivalence of the preorders on $\mZ^m$ follows from Proposition~\refs{isomorphic_linear_Z_m} and Theorem~\refs{equivalent_preorder_tuple_Z_m}. Based on them,
\begin{align*}
\seq x\preceq_{\dSfp {g-1} \msG} \seq y &\iff \seq x\preceq_{\dSfp g \msQ} \seq y \qquad\text{and} \hspace{66mm} \\
\seq x\preceq_{\dSfp {\ln(\digamma(g))} \msG} \seq y &\hspace{23mm} \iff \seq x\preceq_{\dSft {\digamma(g)} \msQ} \seq y \iff \seq x\preceq_{\dSft {\frac{1}{\digamma(g)}} {\msT_0}} \seq y.
\end{align*}
\ \\[-8ex]
\end{proof}
\pagebreak

The finishing diagram of this section depicts isomorphisms between the set of\lb generating functions and its supplements and related subsets described in this\lb section. It extends Figure~\refs{dia1} from the introduction for $\mZ^m$.

\begin{figure}[H]
  \centering\vspace*{-3mm}
\begin{gather*}\xymatrixcolsep{2pc}\xymatrixrowsep{4pc}\xymatrix{
	\msGm_1
		\ar@{..}[rrrrr]|-{\ \ \subseteq\ \ }
		\ar@{<->}[dd]_-{\ref{isomorphic_linear_Z_m}}
	    & 
		    & & & & \msGm
				\ar@{<->}[d]_-{\ref{equivalent_preorder_tuple_Z_m}}
				\ar@<10pt>@{<->}@/^1.5pc/[dd]^-{\ref{equivalent_preorder_tuple_Z_m}} \\
		& \msQ_{\phantom{j}}^{(m)}
			\ar@{<->}[rrrru]^-(.564){\ref{isomorphic_linear_Z_m}\,}
			\ar@{<->}[rrrr]_-(.52){\ref{homomorphic_Z_m}}
			\ar@{<->}[rrrrd]_-(.585){\ref{homomorphic_Z_m}\,\,}\
		    & & & & \msQm
				\ar@{<->}[d]_-{\ref{equivalent_preorder_tuple_Z_m}}
				                     ^-{\ref{equivalent_preorder_pairs_Z_m}} \\
	\msTm
		\ar@{..}[rrrrr]|-{\ \ \supseteq\ \ }
		& 
		    & & & & \msTm_0
}\end{gather*}
   \caption{Preorder-preserving isomorphisms between the sets of generating and\\ supplementary functions on $\mZ^m$.} \label{dia3}
\end{figure}
\

This section started with the definition of the pseudometrics $d^{(m)}_{AM}$, $d^{(m)}_{GM}$, and $d^{(m)}_{HM}$ according to Lemma~\refs{pseudometric_Z_m}. From them, we derived $(1)$-centred pseudometrics in the sense of Lemma~\refs{function_metric}, whose definitions were summarised in Corollary~\refs{induced_pseudometric_preorder_Z_m}. Based on the latter, we provided a general and comprehensive presentation of the possibilities of extending the ideas presented in the previous section for creating pseudometrics and preorders to sets of integer sequences.

Although the considered $(1)$-centred pseudometrics do not exploit the positional information within the sequences, as is the case with $d^{(m)}_{AM}$, $d^{(m)}_{GM}$ and $d^{(m)}_{HM}$, there are definite relationships between the corresponding pseudometrics and the associated preorders under the condition of pairwise different or identical sequences. Related pseudometrics induce identical preorders. Theorem~\refs{compressed_pseudometrics_Z_m} expresses these key results.

\pagebreak


\section{Sequences of consecutive integers} \label{SCI}

The main focus of this paper is on sets of finite integer sequences, in particular on sequences of consecutive integers. So far, we explored the set $\mZ^m$ for a given $m\in\mN$, i.e. the set of $m$-tuples of integral numbers which can be equivalently interpreted as sequences of integers of length $m$. Such $m$-tuples can contain equal values.
 
However, sets of $m$ integers only include different values. Ordered, they may be regarded as strictly increasing sequences of $m$ integers. So, they form a  subset of $\mZ^m$, namely $\{ \seq x \in\mZ^m : x_i<x_{i+1},\ i=0,\dots,m-2 \}$ if $m>1$.

A special case of a strictly increasing sequence of integers is a sequence of\lb consecutive integers. Here $x_{i+1}=x_i+1$ or equivalently $x_i = x_0+i$. Thus, we get a subset of the above subset of $\mZ^m$. Our next step is to examine this set of sequences of consecutive integers of length $m$ in more detail.

\begin{defi} \label{consecutive_x_m}
Let  $m\in\mN$ and $x,x_i\in\mZ$.\\
The finite sequence of $m$ consecutive integers $\seq x _{i=0}^{m-1} = (x+i)_{i=0}^{m-1}$, where $x=x_0$, is denoted by $\ap x m$ \cite{Ziller_2020}. The first member of $\ap x m$ is $x$. In cases where the length\lb is obvious, we write $\as x$ for short.
\end{defi}
\begin{defi} \label{consecutive_Z_m}
Let  $m\in\mN$.\\
The set of all sequences of consecutive integers $\as x$ of length $m$ is denoted by
\ \\[-2ex]
\[\ap \mZ m=\{\as x : x\in\mZ\}.\]
\end{defi}

The sequence $\as x$ is completely determined by $x$. By definition, $\ap \mZ m \subset \mZ^m$.\lb Moreover, there exists a bijection $\as x \mapsto x$ between $\ap \mZ m$ and $\mZ$. This raises the question of how the associated pseudometrics and preorders defined so far should be classified in this context.
\ \\\\[-3ex]

In the past section, we started with the pseudometrics defined in Lemma~\refs{pseudometric_Z_m}, which exploit the positional information within the sequences. In this context, they would be
\ \\[-3ex]
\begin{align*}
d^{(m)}_{AM}\bigl(\as x,\as y\bigr) &= \frac{1}{m}\sum_{i=0}^{m-1}d(x+i,y+i), \\
d^{(m)}_{GM}\bigl(\as x,\as y\bigr) &= \left(\prod_{i=0}^{m-1}d(x+i,y+i)\right)^{\frac{1}{m}}\text{, \quad and} \\
d^{(m)}_{HM}\bigl(\as x,\as y\bigr) &= \begin{cases}
	\displaystyle{\frac{m}{\sum_{i=0}^{m-1}\frac{1}{d(x+i,y+i)}}} & \text{if } d(x+i,y+i)>0 \text{ for all pairs } (\as x,\as y), \\
	0 & \text{otherwise.} \end{cases}
\end{align*}

We restrict ourselves to the associated $(1)$-\hspace{0.25mm}centred pseudometrics
according\lb to Corollary~\refs{induced_pseudometric_preorder_Z_m} in connection with sequences of consecutive integers. A sequence of\lb consecutive integers can also be regarded as an interval of integers, i.e. a set. Then\lb positional information is no longer relevant. The elements of such a sequence or\lb interval are distinct. Thus, the requirements of Theorem~\refs{compressed_pseudometrics_Z_m} are satisfied in a\lb non-trivial manner. The associated pseudometrics each lead to identical preorders.

We would like to point out that $(1)$ does not occur in the set of sequences of\lb consecutive integers, so $(1)\not\in\ap \mZ m$ but $(1)\in\mZ^m$. However, restricting the\lb pseudometrics to $\ap \mZ m$ preserves the property of being $(1)$-\hspace{0,25mm}centred.
\ \\

Let $m\in\mN$, $f:\mZ\to\mR_{\ge0}$. By Lemma~\refs{implied_generating_functions_Z_m}, the Pythagorean means of $f$ along\lb a sequence of consecutive integers $\as x\in\ap \mZ m$ result in
\ \\[-2ex]
\begin{align*}
\fxcp f x &= \frac{1}{m}\sum_{i=0}^{m-1}f(x+i), \\
\fxct f x &= \left(\prod_{i=0}^{m-1}f(x+i)\right)^{\frac{1}{m}} \text{, \quad and} \\
\fxcr f x &= \begin{cases}
	\displaystyle{\frac{m}{\sum_{i=0}^{m-1}\frac{1}{f(x+i)}}} & \text{if } f(x+i)>0 \text{ for all } i, \\
	0 & \text{otherwise.} \end{cases}
\end{align*}

These formulae correspond to moving averages. The moving arithmetic average \cite{Box_2015} and the moving geometric average \cite{Box_2015, Pohlen_1929} have been extensively studied in the field of  time series analysis. Here, we also use the moving harmonic mean analogously.

In financial applications, the simple moving average is often understood as the\lb unweighted arithmetic mean of the previous m data points. This leads to shifted changes in the means and the data. To avoid such effects, the central mean is\lb sometimes used, which is taken from an equal number of data on either side.

However, in our applications we use the means of the function values of prospective sequence elements. So in this case, the lags have the opposite sign than for the simple moving averages.

\begin{defi} \label{moving_averages_f}
Let $m\in\mN$, $x\in\mZ$, $f:\mZ\to\mR_{\ge0}$. \\
We denote the $m^{th}$ order moving Pythagorean averages of $f$ by

\begin{align*}
\fmap f(x) &= \frac{1}{m}\sum_{i=0}^{m-1}f(x+i) \hspace{45.7mm} = \fxcp f x, \\
\fmat f(x) &= \left(\prod_{i=0}^{m-1}f(x+i)\right)^{\frac{1}{m}} \hspace{40.7mm} = \fxct f x \text{, \quad and} \\
\fmar f(x) &= \begin{cases}
	\displaystyle{\frac{m}{\sum_{i=0}^{m-1}\frac{1}{f(x+i)}}} & \text{if } f(x+i)>0 \text{ for all } i, \\
	0 & \text{otherwise} \end{cases}
	= \fxcr f x.
\end{align*}
\end{defi}

\begin{rema}
This definition clarifies the existence of a bijection between $\ap \mZ m$ and $\mZ$, such that all three Pythagorean means on $\ap \mZ m$ correspond to the respective moving averages on $\mZ$. The function $\nu\bigl(\as x\bigr)=x$ and its inverse $\nu^{-1}(x)=\as x$ are both unique.
\end{rema}

As follows from Corollary~\refs{induced_pseudometric_preorder_Z_m}, the adaptation of pseudometrics and preorders\lb on $\mZ^m$ to sequences of consecutive integers now also leads to the use of moving\lb averages. Let $\as x,\as y\in \ap \mZ m$,  $x,y\in\mZ$, $f\in\msG$, $g\in\msQ$, and $h\in\msT$. For $x\ne y$,\lb we get
\ \\[-3ex]
\begin{align*}
\dSfxycp f \msG x y &= \fmap f(x) + \fmap f(y), \\
\dSfxyct f \msG x y &= \fmat f(x) + \fmat f(y), \\
\dSfxycr f \msG x y &= \fmar f(x) + \fmar f(y), \\
\dSfxycp g \msQ x y &= \fmap g(x) + \fmap g(y) - 2, \\
\dSfxyct g \msQ x y &= \fmat g(x) + \fmat g(y)- 2, \\
\dSfxycr g \msQ x y &= \fmar g(x) + \fmar g(y)- 2, \\
\dSfxycp h \msT x y &= 2 - \fmap h(x) - \fmap h(y)),\\
\dSfxyct h \msT x y &= 2 - \fmat h(x) - \fmat h(y), \text{ \quad and} \\
\dSfxycr h \msT x y &= 2 - \fmar h(x) - \fmar h(y),
\end{align*}
whereas $d_{\dots}(x,y)=0$ \ for $x=y$.\\

Consequently, the following relations are preorders on $\ap \mZ m$.
\begin{align*}
\as x\preceq_{\dSfp f \msG} \as y &\iff \fmap f(x) \le \fmap f(y), \\
\as x\preceq_{\dSft f \msG} \as y &\iff \fmat f(x) \le \fmat f(y), \\
\as x\preceq_{\dSfr f \msG} \as y &\iff \fmar f(x) \le \fmar f(y), \\
\as x\preceq_{\dSfp g \msQ} \as y &\iff \fmap g(x) \le \fmap g(y), \\
\as x\preceq_{\dSft g \msQ} \as y &\iff \fmat g(x) \le \fmat g(y), \\
\as x\preceq_{\dSfr g \msQ} \as y &\iff \fmar g(x) \le \fmar g(y), \\
\as x\preceq_{\dSfp h \msT} \as y &\iff \fmap h(x) \ge \fmap h(y), \\
\as x\preceq_{\dSft h \msT} \as y &\iff \fmat h(x) \ge \fmat h(y), \text{ \quad and} \\
\as x\preceq_{\dSfr h \msT} \as y &\iff \fmar h(x) \ge \fmar h(y).
\end{align*}

The relation between values of basis functions obviously depends on the arguments. Therefore, in general, e.g. $f(x) \le f(y) \quad\centernot\Longrightarrow\quad f(x+1) \le f(y+1)$, and as well\lb
$f(x) \le f(y) \quad\centernot\Longrightarrow\quad \fmap f(x) \le \fmap f(y)$ \ and anything similar.
\ \\[-2ex]

Also other moving averages do not follow the relation of the first elements of the sequences. The corresponding preorder on $\mZ$ induced by a given function may differ from that on $\ap \mZ m$ induced by a suitable moving average if $m>1$.

\begin{exam}
To illustrate this, we use the number of distinct prime divisors $\omega(x)$. It is an additive function, $\omega(x)\in\msA$.
\ \\[-2ex]

$f(x)=\omega(x) = |\{p\in\mP : p\mdiv x\}|= \sum_{p | x} 1$.\\[0.5ex]
Let now $x=13$, $y=6$, $n=17$, and $m=3$.  Then
\ \\[-2ex]

$\fn f(x)\hspace{7.3mm}=f(13)=1 \qquad \le \qquad \fn f(y)\hspace{7.2mm}=f(6)=2$ \qquad and

$\fn f(x+1)=f(14)=2 \qquad \ge \hspace{-0.1mm}\qquad \fn f(y+1)=f(7)=1$.\\[0.5ex]
With
\ \\[-2ex]

$\fn f(x+2)=f(15)=2 \qquad \text{\!and\!} \hspace{0.1mm}\qquad \fn f(y+2)=f(8)=1$,\\[0.5ex]
we get
\ \\[-2ex]

$\fnp f \bigl(\as x\bigr)\hspace{10.6mm}=\frac{5}{3} \hspace{-0.3mm}\qquad \ge \qquad \fnp f \bigl(\as y\bigr)\hspace{8.4mm}=\frac{4}{3}$,

$\fnt f \bigl(\as x\bigr)\hspace{11.2mm}=\sqrt[\leftroot{4}3]{4} \hspace{0.4mm}\quad \ge \qquad  \fnt f \bigl(\as y\bigr)\hspace{9.0mm}=\sqrt[\leftroot{4}3]{2}$, \qquad and

$\fnr f \bigl(\as x\bigr)\hspace{10.7mm}=\frac{3}{2} \hspace{-0.3mm}\qquad \ge \qquad  \fnr f \bigl(\as y\bigr)\hspace{8.5mm}=\frac{6}{5}$. \\[-2ex]
\end{exam}

However, the projection of a preorder on $\ap \mZ m$ onto $\mZ$ according to the bijection $\as x \mapsto x$ also leads to a preorder on $\mZ$, which in general is not induced by a generating function or its supplements according to Definition~\refs{generating_functions_Z}.
\ \\

There are some common properties of basis functions and their moving averages that we would like to analyse. Moving averages of non-negative functions are\lb generally also non-negative functions. If even $f\in\msG$, $g\in\msQ$, and $h\in\msT$, then\lb the moving averages have the same corresponding co-domains.

\begin{lemm} \label{co-domains}
Let $m\in\mN$, $x\in\mZ$, and $f:\mZ\to\mR_ { \ge0}$.\\[0.25ex] Then also $\fmap f, \fmat f, \fmar f:\mZ\to\mR_ { \ge0}$.
\ \\[-1ex]

If furthermore, $f\in\msG$, $g\in\msQ$, and $h\in\msT$, then \\[-3ex]
\begin{align*}
\fmap f(x)&\ge0,\hspace{11mm} \fmat f(x)\ge0,\hspace{11mm} \fmar f(x)\ge0, \\
\fmap g(x)&\ge1,\hspace{11.1mm} \fmat g(x)\ge1,\hspace{11.1mm} \fmar g(x)\ge1, \text{ \quad and \quad }\\
0\le \fmap h(x)&\le1,\hspace{3.2mm} 0\le \fmat h(x)\le1,\hspace{3.2mm} 0\le \fmar h(x)\le1.
\end{align*}
\end{lemm}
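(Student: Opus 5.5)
The plan rests on one elementary fact about Pythagorean means: each of the three means of finitely many non-negative reals lies between their minimum and maximum. I verify this directly from Definition~\refs{moving_averages_f} rather than through Lemma~\refs{means_and_sums}. Write $a_i=f(x+i)\ge0$ for $i=0,\dots,m-1$, and set $a_{\min}=\min_i a_i$, $a_{\max}=\max_i a_i$.

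For the arithmetic mean, summing $a_{\min}\le a_i\le a_{\max}$ over $i$ and dividing by $m$ gives $a_{\min}\le\fmap f(x)\le a_{\max}$. For the geometric mean, multiplying the same inequalities (all terms are non-negative) and taking the $m$-th root, which is monotonic on $\mR_{\ge0}$, gives $a_{\min}\le\fmat f(x)\le a_{\max}$. For the harmonic mean there are two cases.
\begin{itemize}
\item If some $a_i=0$, the definition sets $\fmar f(x)=0=a_{\min}$, so the bounds again hold.
\item If all $a_i>0$, then $1/a_{\max}\le 1/a_i\le 1/a_{\min}$. Summing over $i$ and inverting yields $a_{\min}\le\fmar f(x)\le a_{\max}$.
\end{itemize}
In every case the moving average lies in $[a_{\min},a_{\max}]\subseteq\mR_{\ge0}$. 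This proves the first assertion, that all three moving averages map $\mZ$ to $\mR_{\ge0}$.

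The remaining assertions follow from the sandwich bound and the co-domain information in Lemma~\refs{generating_supplements}, as transferred to $\mZ$ in Definition~\refs{generating_functions_Z}.
\begin{itemize}
\item For $f\in\msG$, all values satisfy $f\ge0$, so all three averages are $\ge0$.
\item For $g\in\msQ$, all values satisfy $g\ge1$, so $a_{\min}\ge1$ and all three averages are $\ge1$. Here the zero case of the harmonic mean never occurs, consistent with the remark in the proof of Lemma~\refs{implied_generating_functions_Z_m}.
\item For $h\in\msT$, all values lie in $[0,1]$, so each average lies in $[a_{\min},a_{\max}]\subseteq[0,1]$. The zero branch of $\fmar h$ gives the value $0$, which is still in $[0,1]$.
\end{itemize}

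The only step that needs any care is the harmonic mean, because its definition is piecewise. The case split above, zero branch versus the all-positive branch, settles it.
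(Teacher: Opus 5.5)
Your proof is correct and follows the paper's route. The paper simply cites the chain $\min \le HM \le GM \le AM \le \max$ for positive reals and concludes by definition, while you derive the min--max sandwich for each mean directly; your explicit handling of zero values (the zero branch of the harmonic mean, and zero factors in the geometric mean) is a welcome refinement, since the paper's cited chain is stated only for positive values.
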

\begin{proof}
For positive $x_i\in\mR$, we know the general inequality for the Pythagorean means
\ \\[-1.5ex]
\[ \min(x_i) \le HM(x_i) \le GM(x_i) \le AM(x_i) \le \max(x_i). \]
\ \\[-2ex]
The assertions follow by definition.
\end{proof}

However, for $m>1$, only the moving geometric average and the moving harmonic average of a function $f\in\msG$ are also generating functions on $\mZ$ because
\ \\[-1.5ex]
\[  f(1)=0\quad\Longrightarrow\quad \fmat f(1)=0 \quad\land\quad \fmar f(1)=0. \]
\ \\[-2.5ex]
If furthermore $f(x)\ge0$ for $x\in\mZ\ne1$ then also $\fmat f(x)\ge0$ as well as $\fmar f(x)\ge0$.
\ \\[-2ex]

Under very special conditions, the reverse even applies. One must take into\lb account that $f(1)=0$ also has the consequence that $\fmat f(x)=0$ and $\fmar f(x)=0$\lb for all $-m+2\le x\le 0$.
\ \\[-2ex]

The other moving averages considered can violate the requirements for $x=1$.\\
If e.g. $f(2)>0$, $g(2)>1$, and $h(2)<1$, then $\fmap f(1)>0$ as well as\\ 
$\fmap g(1)>1$, $\fmat g(1)>1$,  $\fmar g(1)>1$, $\fmap h(1)<1$, $\fmat h(1)<1$, and $\fmar h(1)<1$.
\ \\\\[-2ex]

Nevertheless, even taking into account the conditions $\fmat f(x)=0$ and\lb $\fmar f(x)=0$ for all $-m+2\le x\le 0$, a reverse function exists in $\msG$ only in the case of the moving geometric average. The moving harmonic average also has an analogous reverse function. But it can contain negative values.
\ \\[-2ex]

\begin{exam}
Let $f\in\msG$ and $m=2$. We assume there is a function $r\in\msG$ such that $f=\fmar r$. Then \\[-3ex]
\begin{align*}
f(x) &= \frac{x}{\frac{1}{r(x)}+\frac{1}{r(x+1)}}\ , \quad\text{ i.e. }\quad \frac{2}{f(x)} = \frac{1}{r(x)}+\frac{1}{r(x+1)}\ . \quad\text{ Thus } \\
\frac{2}{f(2)} &= \frac{1}{r(2)}+\frac{1}{r(3)}\hspace*{2.7mm}, \quad\phantom{\text{ i.e. }}\quad 
\frac{1}{r(3)}=\frac{2}{f(2)}-\frac{1}{r(2)}\ , \\
\frac{2}{f(3)} &= \frac{1}{r(3)}+\frac{1}{r(4)}\hspace*{2.7mm}, \quad\phantom{\text{ i.e. }}\quad 
\frac{1}{r(4)}=\frac{2}{f(3)}-\frac{1}{r(3)}\ , \\
&\phantom{    = \frac{1}{r(3)}+\frac{1}{r(4)}\hspace*{3.9mm}, \quad\text{ i.e. }}\quad 
\frac{1}{r(4)}=\frac{2}{f(3)}-\frac{2}{f(2)}+\frac{1}{r(2)}\ , \\
\frac{2}{f(4)} &= \frac{1}{r(4)}+\frac{1}{r(5)}\hspace*{2.7mm}, \quad\phantom{\text{ i.e. }}\quad 
\frac{1}{r(5)}=\frac{2}{f(4)}-\frac{1}{r(4)}\ , \\
&\phantom{    = \frac{1}{r(3)}+\frac{1}{r(4)}\hspace*{3.9mm}, \quad\text{ i.e. }}\quad 
\frac{1}{r(5)}=\frac{2}{f(4)}-\frac{2}{f(3)}+\frac{2}{f(2)}-\frac{1}{r(2)} \ge0\ , \quad\text{ and } \\
\frac{1}{r(2)}&\le\frac{2}{f(4)}-\frac{2}{f(3)}+\frac{2}{f(2)}\ .
\end{align*}
For $f(2)=4$, $f(2)=1$, $f(4)=4$, we obtain the contradiction $-1\le r(2)<0$.
\end{exam}

\pagebreak

\begin{prop} \label{reverse_MGHA}
Let $m\in\mN$, $x\in\mZ$, and $f\in\msG$ with $f(x)=0$ \ for $-m+2\le x\le1$\lb and $f(x)>0$ otherwise. Then, there exists a function $r\in\msG$ such that $f=\fmat r$.
\end{prop}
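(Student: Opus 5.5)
The plan is to build $r$ explicitly from the defining identity
\[ f(x)^m=\prod_{i=0}^{m-1}r(x+i)\qquad\text{for all }x\in\mZ, \]
which is equivalent to $f=\fmat r$ because both sides are non-negative. For $m=1$ we simply take $r=f$. So let $m\ge2$. The hypotheses say that $f(x)=0$ exactly for those $x$ whose window $\{x,\dots,x+m-1\}$ contains $1$. We therefore set $r(1)=0$ and look for $r(x)>0$ for all $x\ne1$. Then every window containing $1$ has product $0=f(x)^m$, and these windows are exactly $-m+2\le x\le1$. Moreover $r(1)=0$ and $r\ge0$, so $r\in\msG$ by Definition~\refs{generating_functions_Z}.

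It remains to satisfy the identity on the two families of windows that avoid $1$. These are the windows with $x\ge2$, lying in $\{2,3,\dots\}$, and those with $x\le1-m$, lying in $\{\dots,-1,0\}$. On each half-line the values of $f$ are positive. Dividing the identity at $x+1$ by the identity at $x$ gives the recurrence
\[ r(x+m)=r(x)\,\frac{f(x+1)^m}{f(x)^m}, \]
valid whenever both windows avoid $1$.

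On the right half-line we start the recurrence with the initial block $r(2)=\dots=r(m+1)=f(2)$. This satisfies the identity at $x=2$. We then define $r(x+m)$ by the recurrence for $x\ge2$. An induction on $x$ shows that the product over the window at $x+1$ equals the product at $x$ times $r(x+m)/r(x)=f(x+1)^m/f(x)^m$, so the identity holds for all $x\ge2$. Positivity is preserved at each step, since all factors are positive.

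On the left half-line we proceed symmetrically. We set $r(1-m)=\dots=r(0)=f(1-m)$, which gives the identity at $x=1-m$. We then go downward via
\[ r(x)=r(x+m)\,\frac{f(x)^m}{f(x+1)^m}\qquad\text{for }x\le-m, \]
and the same induction, now run downward, yields the identity for all $x\le1-m$.

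The only delicate point is the bookkeeping. We must check that the two half-line constructions do not overlap: they use the indices $x\ge2$ and $x\le0$ respectively. We must also check that every $x\in\mZ$ falls into exactly one of the three ranges $x\ge2$, $-m+2\le x\le1$, and $x\le1-m$, so that the identity is verified everywhere. The construction works because the zero set of $f$ is exactly the set of windows containing $1$. It breaks down for the harmonic average, as the preceding example shows, because there the recurrence involves differences of reciprocals rather than quotients, and positivity can fail.
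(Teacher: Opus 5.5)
Your proposal is correct and follows essentially the same route as the paper: set $r(1)=0$, fix an initial block of positive values on each half-line, propagate with the ratio recurrence $r(x+m)=r(x)\,\bigl(f(x+1)/f(x)\bigr)^m$ (run downward on the left), and verify the window identity by telescoping induction. The only difference is that the paper leaves the $m-1$ initial values $y_i$ and $z_i$ arbitrary, with $r(m+1)$ and $r(1-m)$ then determined, whereas your constant blocks $f(2)$ and $f(1-m)$ are the special case $y_i=f(2)$, $z_i=f(1-m)$.
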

\ \\[-5ex]
\begin{proof}
For $m=1$, the assertion simply holds if $r=f=r^{(1,\ldot)}_{MA}$. Let now $m\in\mN>1$\lb and $f$ as assumed.
\ \\[-2ex]

We each choose $m-1$ arbitrary positive real values $y_i\in\mR>0$ with $2\le i\le m$,\lb and $z_i\in\mR>0$ with $-m+2\le i\le 0$. The function $r$ defined recursively by \\[-2ex]
\begin{align*}
r(1) &= 0, \\
r(i) &= y_i, \hspace{61.1mm} 2\le i\le m, \\
r(m+1) &= \frac{\big(f(2)\big)^m}{\prod_{i=2}^m y_i}, \\
r(m+i) &= \left(\frac{f(i+1)}{f(i)}\right)^m \mdot r(i), \hspace{40mm} i>1, \\
r(i) &= z_i, \hspace{49.3mm} -m+2\le i\le 0, \\
r(-m+1) &= \frac{\big(f(-m+1)\big)^m}{\prod_{i=-m+2}^0 z_i}, \\
r(-m+i) &= \left(\frac{f(-m+i)}{f(-m+i+1)}\right)^m \mdot r(i), \hspace{27.9mm} i<1,
\end{align*}
\ \\[-1ex]
meet the requirements.
\ \\[-2ex]

Then $r(x)>0$ \ for $x\ne1$. The corresponding moving average is \\[-2ex]
\begin{align*}
-m+2\le x\le1\ :& \hspace{118mm}\\
\fmat r(x)&=\left(\prod_{i=x}^{x+m-1} r(i)\right)^{\frac{1}{m}}
=\left(\prod_{i=x}^0 r(i) \cdot r(1) \cdot \prod_{i=2}^{x+m-1} r(i)\right)^{\frac{1}{m}}=0=f(x), \\
x=2\phantom{-m+2\le\ }:& \\
\fmat r(2)&=\left(\prod_{i=0}^{m-1} r(2+i)\right)^{\frac{1}{m}}
=\left(\prod_{i=2}^m r(i) \mdot r(m+1)\right)^{\frac{1}{m}} \\
&=\left(\prod_{i=2}^m y_i \mdot  \frac{\big(f(2)\big)^m}{\prod_{i=2}^m y_i}\right)^{\frac{1}{m}}=f(2),
\end{align*}

\begin{align*}
x>2\phantom{-m+2\le\ }:& \\
\fmat r(x)&=\left(\prod_{i=0}^{m-1} r(x+i)\right)^{\frac{1}{m}}=\left(\prod_{i=x}^{m+x-2} r(i) \mdot r(m+x-1)\right)^{\frac{1}{m}} \hspace*{20.2mm}\\
&=\left(\prod_{i=x}^{m+x-2} r(i) \mdot \left(\frac{f(x)}{f(x-1)}\right)^m \mdot r(x-1)\right)^{\frac{1}{m}} \\
&=\left(\prod_{i=x-1}^{m+x-2} r(i) \mdot \left(\frac{f(x)}{f(x-1)}\right)^m\right)^{\frac{1}{m}} \\
&=\left(\big(f(x-1)\big)^m \mdot \left(\frac{f(x)}{f(x-1)}\right)^m\right)^{\frac{1}{m}}
=f(x), \\
x=-m+1\phantom{\le1\ }:& \\
\fmat r(-m+1)&=\left(r(-m+1) \mdot \prod_{i=-m+2}^0 r(i)\right)^{\frac{1}{m}} \\
&=\left(\frac{\big(f(-m+1)\big)^m}{\prod_{i=-m+2}^0 z_i} \mdot \prod_{i=-m+2}^0 z_i\right)^{\frac{1}{m}}=f(-m+1), \\
x\le-m\phantom{m+2\le}:& \\
\fmat r(x)&=\left(r(x) \mdot \prod_{i=x+1}^{x+m-1} r(i)\right)^{\frac{1}{m}} \\
&=\left(\left(\frac{f(x)}{f(x+1)}\right)^m \mdot r(m+x) \mdot \prod_{i=x+1}^{x+m-1} r(i)\right)^{\frac{1}{m}} \\
&=\left(\left(\frac{f(x)}{f(x+1)}\right)^m \mdot \big(f(x+1)\big)^m\right)^{\frac{1}{m}}
=f(x). \qedhere
\end{align*}
\end{proof}

\pagebreak

We now turn back to arithmetic functions. All previous results apply to them\lb accordingly. For every non-negative arithmetic function $f:\mN\to\mR_{\ge0}$, we have\lb defined a function $\fn f:\mZ\to\mR_{\ge0}$ according to Lemma~\refs{AF_extend} as the basis for\lb generalising pseudometrics and preorders to $\mZ$. We want to find out relationships between corresponding pseudometrics and preorders on $\mZ$ and $\ap \mZ m$. We would like to take a closer look at the admissible arithmetic functions defined in Definition~\refs{admissible}\lb in particular. \\

Let $f\in\msA\cup\msM\cup\msI$ be an admissible arithmetic function and $n\in\mN>1$. Lemma~\refs{AF_extend} defines $\fn f$ and proves that it is one of the basis functions mentioned above. Based on Definition~\refs{moving_averages_f}, we write the Pythagorean means of $\fn f$ along a sequence of consecutive integers $\as x\in\ap \mZ m$ as
\begin{align*}
\fnxcp f x &= \frac{1}{m}\sum_{i=0}^{m-1}\fn f(x+i) \hspace{49.1mm} = \fnmap f(x), \\
\fnxct f x &= \left(\prod_{i=0}^{m-1}\fn f(x+i)\right)^{\frac{1}{m}} \hspace{44.1mm} = \fnmat f(x) \text{, \quad and} \\
\fnxcr f x &= \begin{cases}
	\displaystyle{\frac{m}{\sum_{i=0}^{m-1}\frac{1}{\fn f(x+i)}}} & \text{if } \fn f(x+i)>0 \text{ for all } i, \\
	0 & \text{otherwise} \end{cases}
	= \fnmar f(x).
\end{align*}
This notation leads to the following conclusion.
\ \\[-1ex]
\begin{coro} \label{MA_fn}
Let $x\in\mZ$, $m\in\mN$, $n\in\mN>1$, and $f:\mN\to\mR_{\ge0}$ be a non-negative arithmetic function. Then
\begin{align*}
\fnmap f(x)&= \fn {\left(\fmap f\right)}(x), \\
\fnmat f (x) &= \fn {\left(\fmat f\right)}(x) \text{, \quad and} \\
\fnmar f (x) &= \fn {\left(\fmar f\right)}(x).
\end{align*}
\end{coro}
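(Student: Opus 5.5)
The plan is to reduce everything to the $n$-periodicity of $\fn f$ from Lemma~\refs{periodic_fn}, and to check the three moving averages in parallel. The key point is that each moving average at $x$ is a symmetric function (sum, product, or sum of reciprocals) of the $m$ values $\fn f(x),\dots,\fn f(x+m-1)$. First I fix $x\in\mZ$ and put $\tilde x = 1+(x-1)\modo n\in\mZ_n$. Then $x+i\congr \tilde x+i \modu n$ for every $0\le i\le m-1$. Applying Lemma~\refs{periodic_fn} (repeatedly, or with a shift by an integer multiple of $n$) gives
\[ \fn f(x+i)=\fn f(\tilde x+i)\qquad\text{for all } 0\le i\le m-1. \]

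Substituting this into Definition~\refs{moving_averages_f} immediately yields $\fnmap f(x)=\fnmap f(\tilde x)$, and likewise for the geometric average. For the harmonic average, the case distinction ``$\fn f(x+i)>0$ for all $i$'' is decided on the same multiset of values for $x$ and for $\tilde x$. Hence both branches agree, including the branch with value $0$. In particular, each of the three moving averages of $\fn f$ is itself $n$-periodic and is determined by its restriction to $\mZ_n$.

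Second, I identify this restriction with the moving average of the underlying arithmetic function $f$ evaluated at $\tilde x\in\mZ_n$. Then the defining formula of Lemma~\refs{AF_extend}, namely $\fn{(\cdot)}(x)=(\cdot)\bigl(1+(x-1)\modo n\bigr)$, gives the right-hand sides $\fn{\left(\fmap f\right)}(x)$, $\fn{\left(\fmat f\right)}(x)$ and $\fn{\left(\fmar f\right)}(x)$. Concretely, I will read the moving average of $f$ on $\mZ_n$ as being taken along the residue classes, that is, with the arguments $\tilde x+i$ understood modulo $n$ exactly as in the definition of $\fn f$. With this reading the chain $\fnmap f(x)=\fnmap f(\tilde x)=\fmap f(\tilde x)=\fn{\left(\fmap f\right)}(x)$ closes, and the same chain holds for the geometric and harmonic averages.

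The main obstacle is this second step. Literally, $\fmap f(\tilde x)$ involves $f(\tilde x+i)$, and $\tilde x+i$ may exceed $n$ when $\tilde x>n-m+1$. For such $i$ one has $f(\tilde x+i)\ne \fn f(\tilde x+i)$ in general, unless $f$ is itself $n$-periodic on $\mN$. So I would state explicitly, at the start of the proof, that the moving average of the arithmetic function is formed on $\mZ_n\simeq\mZ/n\mZ$, consistent with the remark that $\mZ_n$ represents the residue classes. The identity then follows from periodicity alone. If instead the literal reading is intended, I would add the restriction $\tilde x\le n-m+1$ (or $n$-periodicity of $f$) and note that the same argument goes through verbatim under it.
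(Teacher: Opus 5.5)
Your route is essentially the paper's: both unfold $\fn f(x+i)=f\bigl(1+(x+i-1)\modo n\bigr)$ termwise and then read off a moving average of $f$ at $\tilde x=1+(x-1)\modo n$. Your preliminary periodicity reduction is harmless. The step you single out as the main obstacle is exactly where the paper's proof is unjustified. The paper passes from $\frac{1}{m}\sum_{i=0}^{m-1} f\bigl(1+(x+i-1)\modo n\bigr)$ to $\fmap f\bigl(1+(x-1)\modo n\bigr)$ without comment, and likewise for the geometric and harmonic averages. That step tacitly uses $1+(x+i-1)\modo n=\tilde x+i$ for all $0\le i\le m-1$, i.e.\ $\tilde x\le n-m+1$. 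Without that, the corollary as literally stated is false. Take $n=6$, $m=2$, $f=\ld$ and $x=6$. Then $\ld^{\around{6}(2,+)}_{MA}(6)=\frac{1}{2}\bigl(\ld(6)+\ld(1)\bigr)=\frac{5}{12}\approx0.4167$, which is the value listed for $x=6$ in the paper's own table. On the other hand, $\bigl(\ld^{(2,+)}_{MA}\bigr)^{\around{6}}(6)=\frac{1}{2}\bigl(\ld(6)+\ld(7)\bigr)=\frac{41}{84}\approx0.4881$.

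So you were right not to claim the identity unconditionally, and both of your repairs are correct. Under the cyclic reading of $\fmap f$ on $\mZ_n$, the statement becomes a restatement of Definition~\ref{moving_averages_f} applied to $\fn f$; your periodicity step supplies the needed $\tilde x\equiv x \modu n$. Under the literal reading, you need either the hypothesis $1+(x-1)\modo n\le n-m+1$ (non-vacuous only when $m\le n$) or $n$-periodicity of $f$ on $\mN$. With either one, your chain goes through verbatim, including the case split for the harmonic average. The restricted version is the one with actual content. It is also precisely what the paper later relies on in the proof of Lemma~\ref{distinct_order}, where $\fn f(i)=f(i)$ is invoked only for $i\le n$.
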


\begin{proof}
The corollary is a direct consequence of Lemma~\refs{AF_extend} and Definition~\refs{moving_averages_f}. We get
\ \\[-3ex]
\begin{align*}
\fnmap f(x) &= \frac{1}{m}\sum_{i=0}^{m-1}\fn f(x+i) = \frac{1}{m}\sum_{i=0}^{m-1}f\bigl(1+ (x+i-1) \modo n\bigr) \hspace*{31.2mm}\\
&= \fmap f\bigl(1+ (x-1) \modo n\bigr) = \fn {\left(\fmap f\right)}(x),
\end{align*}

\begin{align*}
\fnmat f(x) &= \left(\prod_{i=0}^{m-1}\fn f(x+i)\right)^{\frac{1}{m}} = \left(\prod_{i=0}^{m-1}f\bigl(1+ (x+i-1) \modo n\bigr)\right)^{\frac{1}{m}} \\
&= \fmat f\bigl(1+ (x-1) \modo n\bigr) = \fn {\left(\fmat f\right)}(x) \text{, \qquad\qquad\qquad\qquad and} \\
\fnmar f(x) &= \begin{cases}
	\displaystyle{\frac{m}{\sum_{i=0}^{m-1}\frac{1}{\fn f(x+i)}}} & \text{if } \fn f(x+i)>0 \text{ for all } i, \\
	0 & \text{otherwise} \end{cases} \\
	&= \begin{cases}
	\displaystyle{\frac{m}{\sum_{i=0}^{m-1}\frac{1}{f\bigl(1+ (x+i-1) \modo n\bigr)}}} & \text{if } f\bigl(1+ (x+i-1) \modo n\bigr)>0 \text{ for all } i, \\
	0 & \text{otherwise} \end{cases} \\
&= \fmar f\bigl(1+ (x-1) \modo n\bigr) = \fn {\left(\fmar f\right)}(x). \qedhere
\end{align*}
\end{proof}
\ \\[-1ex]

In Lemma~\refs{periodic_fn}, we proved that the functions $\fn f$ are periodic with period $n$.\lb Consequently, this also applies to their moving averages.
\ \\[-1ex]
\begin{lemm} \label{periodic_fn_MA}
Let $m\in\mN$, $n\in\mN>1$, and $f:\mN\to\mR_{\ge0}$ be a non-negative\lb arithmetic function.

The functions $\fnmap f$, $\fnmat f$ , and $\fnmar f$ are periodic with period $n$.
\end{lemm}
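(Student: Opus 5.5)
The plan is to reduce the claim directly to Lemma~\refs{periodic_fn}, which already shows that $\fn f(x)=\fn f(x+n)$ for all $x\in\mZ$. Each moving average is built from the $m$ values $\fn f(x),\dots,\fn f(x+m-1)$. Shifting the argument by $n$ shifts this whole window by $n$. So the plan is to show that each of the three means, evaluated at $x+n$, sees exactly the same list of values as at $x$.

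Concretely, I would fix $x\in\mZ$ and apply Lemma~\refs{periodic_fn} to each argument $x+i$, $0\le i\le m-1$. This gives $\fn f(x+n+i)=\fn f(x+i)$. Substituting this into the definitions preceding Corollary~\refs{MA_fn} yields, term by term,
\[ \fnmap f(x+n)=\frac{1}{m}\sum_{i=0}^{m-1}\fn f(x+n+i)=\frac{1}{m}\sum_{i=0}^{m-1}\fn f(x+i)=\fnmap f(x), \]
and the analogous identity for the product under the $m$-th root.

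For the harmonic average, one extra remark is needed. The case distinction ``$\fn f(x+i)>0$ for all $i$'' is itself invariant under the shift, because the multisets of values coincide. Hence both branches of the definition transfer, and in each branch the values agree.

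Alternatively, I could use Corollary~\refs{MA_fn}: each moving average equals $\fn{(\cdot)}$ applied to the non-negative arithmetic function $\fmap f$, $\fmat f$ or $\fmar f$, respectively. Lemma~\refs{periodic_fn} applies to such functions directly. This is a one-line argument, although one must note that the moving averages of $f$ are again non-negative functions on $\mN$, which follows from Lemma~\refs{co-domains}.

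There is no real obstacle here. The only point requiring care is the piecewise definition of the harmonic average, and the invariance of its positivity condition settles it.
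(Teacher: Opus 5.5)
Your main argument is correct and is essentially the paper's own proof: Lemma~\ref{periodic_fn} is applied termwise to the window $x,\dots,x+m-1$, and the positivity condition of the harmonic case shifts along with the values. Do not rely on the alternative via Corollary~\ref{MA_fn}, however: the identity $\fnmap f(x)=\fn{\bigl(\fmap f\bigr)}(x)$ fails whenever the window starting at $1+(x-1)\modo n$ runs past $n$, because $\fnmap f$ wraps around to $f(1),f(2),\dots$ while $\fmap f$ uses $f(n+1),f(n+2),\dots$ (for $f=\omega$, $n=3$, $m=2$, $x=3$ the two sides are $\tfrac12\bigl(\omega(3)+\omega(1)\bigr)=\tfrac12$ and $\tfrac12\bigl(\omega(3)+\omega(4)\bigr)=1$), so that route only proves periodicity of a different function.
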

\ \\[-5ex]

\begin{proof}
Let $x\in\mZ$. By Lemma~\refs{periodic_fn} and Definition~\refs{moving_averages_f}, we get
\begin{align*}
\fnmap f(x) &= \frac{1}{m}\sum_{i=0}^{m-1}\fn f(x+i) = \frac{1}{m}\sum_{i=0}^{m-1}\fn f(x+i+n) = \fnmap f(x+n), \\
\fnmat f(x) &= \left(\prod_{i=0}^{m-1}\fn f(x+i)\right)^{\frac{1}{m}} = \left(\prod_{i=0}^{m-1}\fn f(x+i+n)\right)^{\frac{1}{m}} \\
&= \fnmat f(x+n) \text{, \hspace{70mm} and} \\
\fnmar f(x) &= \begin{cases}
	\displaystyle{\frac{m}{\sum_{i=0}^{m-1}\frac{1}{\fn f(x+i)}}} & \text{if } \fn f(x+i)>0 \text{ for all } i, \\
	0 & \text{otherwise} \end{cases} \\
&= \begin{cases}
	\displaystyle{\frac{m}{\sum_{i=0}^{m-1}\frac{1}{\fn f(x+i+n)}}} & \text{if } \fn f(x+i+n)>0 \text{ for all } i, \\
	0 & \text{otherwise} \end{cases} \\
&= \fnmar f(x+n). \qedhere
\end{align*}
\end{proof}

Analogous to Corollary~\refs{induced_pseudometric_preorder_Z_m}, the notation for the corresponding pseudometrics and preorders on $\ap \mZ m$, which refer to admissible arithmetic functions, reads as follows.

\begin{coro} \label{induced_pseudometric_preorder_SCI}
Let $x,y\in\mZ$, $m\in\mN$, $n\in\mN>1$, $f\in\msA$, $g\in\msM$, and $h\in\msI$. For $x\ne y$,\lb we set
\ \\[-4ex]
\begin{align*}
\dnSfxycp f \msA x y& = \fnmap f(x) + \fnmap f(y), \\
\dnSfxyct f \msA x y& = \fnmat f(x) + \fnmat f(y), \\
\dnSfxycr f \msA x y& = \fnmar f(x) + \fnmar f(y), \\
\dnSfxycp g \msM x y& = \fnmap g(x) + \fnmap g(y) - 2, \\
\dnSfxyct g \msM x y& = \fnmat g(x) + \fnmat g(y) - 2, \\
\dnSfxycr g \msM x y& = \fnmar g(x) + \fnmar g(y) - 2, \\
\dnSfxycp h \msI x y& = 2 - \fnmap h(x) - \fnmap h(y), \\
\dnSfxyct h \msI x y& = 2 -\fnmat h(x) - \fnmat h(y), \text{ \quad and} \\
\dnSfxycr h \msI x y& = 2 -\fnmar h(x) - \fnmar h(y),
\end{align*}
\ \\[-3ex]
whereas $d_{\dots}(\as x,\as y)=0$ \ for $x=y$. These distance functions are pseudometrics on $\ap \mZ m$. \\[-2ex]

The corresponding preorders are
\ \\[-4ex]
\begin{align*}
\as x\preceq_{\dnSfp f \msA} \as y &\iff \fnmap f(x) \le \fnmap f(y), \\
\as x\preceq_{\dnSft f \msA} \as y &\iff \fnmat f(x) \le \fnmat f(y), \\
\as x\preceq_{\dnSfr f \msA} \as y &\iff \fnmar f(x) \le \fnmar f(y), \\
\as x\preceq_{\dnSfp g \msM} \as y &\iff \fnmap g(x) \le \fnmap g(y), \\
\as x\preceq_{\dnSft g \msM} \as y &\iff \fnmat g(x) \le \fnmat g(y), \\
\as x\preceq_{\dnSfr g \msM} \as y &\iff \fnmar g(x) \le \fnmar g(y), \\
\as x\preceq_{\dnSfp h \msI} \as y &\iff \fnmap h(x) \ge \fnmap h(y), \\
\as x\preceq_{\dnSft h \msI} \as y &\iff \fnmat h(x) \ge \fnmat h(y), \text{\qquad and} \\
\as x\preceq_{\dnSfr h \msI} \as y &\iff \fnmar h(x) \ge \fnmar h(y).
\end{align*}
\end{coro}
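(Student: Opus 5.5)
The plan is to reduce the statement to results already established, in three steps: identify the basis functions, specialise Corollary~\refs{induced_pseudometric_preorder_Z_m} to sequences of consecutive integers, and rewrite the resulting means as moving averages via Corollary~\refs{MA_fn}.

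First, by Corollary~\refs{admissible_extend}, the extended functions satisfy $\fn f\in\msG$, $\fn g\in\msQ$ and $\fn h\in\msT$ for $f\in\msA$, $g\in\msM$, $h\in\msI$ and $n\in\mN>1$. They are therefore admissible basis functions in the sense of Definition~\refs{generating_functions_Z}.

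Second, we apply Lemma~\refs{implied_generating_functions_Z_m} to these basis functions. It gives that the nine Pythagorean means $\fnp f,\fnt f,\fnr f\in\msGm$, $\fnp g,\fnt g,\fnr g\in\msQm$ and $\fnp h,\fnt h,\fnr h\in\msTm$ are generating, respectively supplementary, functions on $\mZ^m$. Corollary~\refs{induced_pseudometric_preorder_Z_m} (itself resting on Proposition~\refs{function_metric_other} and Corollary~\refs{induced_preorder_other}) then shows that the nine distance functions built from them are $(1)$-centred pseudometrics on $\mZ^m$, and that the listed relations are preorders. The three cases are uniform: sums of function values for $\msG$, sums minus $2$ for $\msQ$, and $2$ minus sums for $\msT$.

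Third, we restrict from $\mZ^m$ to the subset $\ap \mZ m\subset\mZ^m$. The pseudometric axioms and the reflexivity and transitivity of a preorder are inherited by every subset, as noted at the start of Section~\refs{MO_Z}. It remains to rewrite the means evaluated at $\as x=(x+i)_{i=0}^{m-1}$ as moving averages. By Definition~\refs{moving_averages_f} and the displayed notation preceding Corollary~\refs{MA_fn}, we have $\fnp f\bigl(\as x\bigr)=\fnmap f(x)$, and analogously for the geometric and harmonic means and for $g$ and $h$.

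The only point needing care is the identity clause. Under the bijection $\nu\bigl(\as x\bigr)=x$ from the Remark after Definition~\refs{moving_averages_f}, the conditions $\as x\ne\as y$ and $x\ne y$ are equivalent. Hence the convention $d_{\dots}=0$ for $x=y$ coincides with the one in Corollary~\refs{induced_pseudometric_preorder_Z_m}, and the rewritten formulas are literally the restricted pseudometrics and preorders. Substituting them yields the stated list.

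I expect no real obstacle beyond this bookkeeping. For the $\msI$ case it is worth checking that, when some $\fn h(x+i)=0$, the harmonic mean takes the value $0$ by definition, so that $2-\fnmar h(x)-\fnmar h(y)$ still has the form required by Proposition~\refs{function_metric_other}. Lemma~\refs{co-domains} guarantees $0\le\fnmar h\le1$, and this suffices.
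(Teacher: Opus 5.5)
Your proposal is correct and follows the paper's own route: the paper proves this in one line by applying Corollary~\ref{induced_pseudometric_preorder_Z_m} to $\as x,\as y\in\ap \mZ m\subset\mZ^m$ with the basis functions $\fn f\in\msG$, $\fn g\in\msQ$, $\fn h\in\msT$, which is exactly what your three steps do. Your extra bookkeeping makes explicit what the paper leaves implicit: inheritance of the axioms by the subset, the equivalence $\as x\ne\as y\iff x\ne y$, and the harmonic case via Lemma~\ref{co-domains}. Corollary~\ref{MA_fn} is not actually needed, since $\fnp f\bigl(\as x\bigr)=\fnmap f(x)$ holds by definition.
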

\begin{proof}
The assertions follow from Corollary~\refs{induced_pseudometric_preorder_Z_m} because $\as x, \as y\in\ap \mZ m\subset\mZ^m$,\lb $\fn f\in\msG$, \ $\fn g\in\msQ$, \ and $\fn h\in\msT$.
\end{proof}

\pagebreak

The application of Theorem~\refs{equivalent_preorder_tuple_Z_m} to admissible arithmetic functions highlights the existence of function triples that also induce equivalent preorders on both $\mZ$ and $\mZ^m$, so also on $\ap \mZ m$.

\begin{coro} \label{admissible_function_tuple_SCI}
Let $m\in\mN$, $n\in\mN>1$, $x,y\in\mZ$, $f\in\msA$, $g\in\msM$, and $h\in\msI_0$.

For each $f\in\msA$, there exist $g\in\msM$ and $h\in\msI_0$ and vice versa, such that both\\
the preorders $\dnSf f \msA$, $\dnSf g \msM$, and $\dnSf h {\msI_0}$ are pairwise equivalent on $\mZ$, i.e.
\[ x\preceq_{\dnSf f \msA} y \iff x\preceq_{\dnSf g \msM} y \iff x\preceq_{\dnSf h {\msI_0}}y, \]
and the preorders $\dnSfp f \msA$, $\dnSft g \msM$, and $\dnSft  h {\msI_0}$ are pairwise equivalent on $\ap \mZ m$, i.e.
\[\as x\preceq_{\dnSfp f \msA} \as y \iff \as x\preceq_{\dnSft g \msM} \as y \iff \as x\preceq_{\dnSft  h {\msI_0}} \as y.\]
\end{coro}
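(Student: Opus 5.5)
The plan is to fix $f\in\msA$ and define its partners as in Proposition~\refs{function_triple_Z}, namely $g=\e^{\ f}$ and $h=\e^{-f}$. Conversely, for given $g\in\msM$ we take $f=\ln(g)$ and $h=\frac{1}{g}$, and for given $h\in\msI_0$ we take $f=-\ln(h)$ and $g=\frac{1}{h}$. This yields the bijections $\msA\simeq\msM\simeq\msI_0$. Since $f$ is additive with $f(x)\ge0$ for $x>1$, $\e^{\ f}$ is multiplicative with values $\ge1$, and $\e^{-f}$ is multiplicative with values in $(0,1]$; the reverse transformations preserve these properties as well, because $g(x)\ge1>0$ and $h(x)>0$ make the logarithm well defined. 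The first chain of equivalences, the one on $\mZ$, is then exactly Proposition~\refs{function_triple_Z}, and nothing new is needed there.

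For the statement on $\ap \mZ m$, the first step is to transfer the exponential relation through the periodic extension. As in the proof of Proposition~\refs{function_triple_Z}, we have $\fn g=\e^{\ \fn f}$ and $\fn h=\e^{-\fn f}$ pointwise on $\mZ$. By Corollary~\refs{admissible_extend}, the extended functions satisfy $\fn f\in\msG$, $\fn g\in\msQ$ and $\fn h\in\msT_0$, and they are related exactly as the triple in Theorem~\refs{equivalent_preorder_tuple_Z_m}.

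Next we compute the moving averages directly, as in the proof of Theorem~\refs{equivalent_preorder_tuple_Z_m}:
\[ \fnmat g(x)=\left(\prod_{i=0}^{m-1}\e^{\ \fn f(x+i)}\right)^{\frac{1}{m}}=\e^{\ \fnmap f(x)}, \qquad \fnmat h(x)=\e^{-\fnmap f(x)}. \]
Alternatively, one can apply Theorem~\refs{equivalent_preorder_tuple_Z_m} on $\mZ^m$ to the basis functions $\fn f,\fn g,\fn h$ and restrict to the subset $\ap \mZ m\subset\mZ^m$, since preorders restrict to subsets. The map $t\mapsto\e^{t}$ is strictly increasing and $t\mapsto\e^{-t}$ is strictly decreasing. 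Combined with the descriptions of the preorders in Corollary~\refs{induced_pseudometric_preorder_SCI}, this gives
\[ \fnmap f(x)\le\fnmap f(y)\iff\fnmat g(x)\le\fnmat g(y)\iff\fnmat h(x)\ge\fnmat h(y), \]
which is the asserted chain of equivalences on $\ap \mZ m$.

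The only point needing care, and the one I expect to be the main obstacle, is the bookkeeping. We must check that applying the exponential before the extension $x\mapsto 1+(x-1)\modo n$ gives the same result as applying it afterwards, which holds because the extension is just a composition. We must also check that taking the geometric mean of exponentials yields the exponential of the arithmetic mean, with no zero-value case distinctions arising. The latter is guaranteed by $\fn h>0$, which follows from $h\in\msI_0$. Everything else reduces to earlier results.
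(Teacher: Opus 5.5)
Your proposal is correct and follows the paper's own route: the chain on $\mZ$ is exactly Proposition~\ref{function_triple_Z}, and the chain on $\ap \mZ m$ is Theorem~\ref{equivalent_preorder_tuple_Z_m} applied to $\fn f\in\msG$, $\fn g\in\msQ$, $\fn h\in\msT_0$ and restricted to $\ap \mZ m\subset\mZ^m$, which your explicit geometric-mean computation merely unpacks. One small remark: positivity of $\fn h$ plays no role in the identity between the geometric mean of exponentials and the exponential of the arithmetic mean, since the geometric mean has no case distinction; it is needed only so that $h=\e^{-f}$ is possible at all, i.e.\ for the bijection onto $\msI_0$.
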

\begin{proof}
Confer Proposition~\refs{function_triple_Z} and Theorem~\refs{equivalent_preorder_tuple_Z_m}. We have $\as x, \as y\in\ap \mZ m\subset\mZ^m$,\lb $\fn f\in\msG$, \ $\fn g\in\msQ$, \ and $\fn h\in\msT_0$.
\end{proof}
\ \\[-3ex]

Theorem~\refs{equivalent_preorder_pairs_Z_m} points to additional pairs of equivalent preorders on $\mZ^m$, so also\lb on $\ap \mZ m$. The functions $g$ and $h$ of a basic triple according to the previous corollary also induce these preorders.

\begin{coro} \label{admissible_function_pairs_SCI}
Let $m\in\mN$, $n\in\mN>1$, $x,y\in\mZ$, $g\in\msM$, and $h\in\msI_0$ with $h=\frac{1}{g}$.\lb Then
the pairs of preorders $\dnSfp g \msM$, $\dnSfr h {\msI_0}$ and $\dnSfp h {\msI_0}$, $\dnSfr g \msM$ are each equivalent\lb on $\ap \mZ m$, i.e.
\[ \as x\preceq_{\dnSfp g \msM} \as y \iff \as x\preceq_{\dnSfr h {\msI_0}} \as y \ \ \ \text{and}\ \ \ \as x\preceq_{\dnSfp h {\msI_0}} \as y \iff \seq x\preceq_{\dnSfr g \msM} \as y.\]
\end{coro}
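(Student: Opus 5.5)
The corollary is the specialisation of Theorem~\refs{equivalent_preorder_pairs_Z_m} to the periodic extensions of admissible arithmetic functions. The plan is to produce basis functions on $\mZ$ satisfying that theorem's hypotheses, and then restrict from $\mZ^m$ to $\ap \mZ m$.

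First, by Corollary~\refs{admissible_extend} we have $\fn g\in\msQ$ and $\fn h\in\msT_0$, since $g\in\msM$ and $h\in\msI_0$. Next we check that the reciprocal relation survives the extension. For every $x\in\mZ$,
\[ \fn h(x)=h\bigl(1+(x-1)\modo n\bigr)=\frac{1}{g\bigl(1+(x-1)\modo n\bigr)}=\frac{1}{\fn g(x)}, \]
and this is well defined because $g\ge1>0$ on $\mN$. So the pair $\fn g$, $\fn h=\frac{1}{\fn g}$ satisfies the hypotheses of Theorem~\refs{equivalent_preorder_pairs_Z_m}.

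Applying that theorem to this pair gives the two equivalences of preorders on $\mZ^m$. We restrict them to the subset $\ap \mZ m\subset\mZ^m$, since a restriction of equivalent preorders stays equivalent. Using the notation of Corollary~\refs{induced_pseudometric_preorder_SCI}, with $\fn f$ evaluated along $\as x$ giving the moving averages $\fnmap f(x)$ etc., the restricted relations are exactly the preorders in the statement.

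As a sanity check, the identity can be verified directly:
\[ \fnmap g(x)=\frac1m\sum_{i=0}^{m-1}\frac{1}{\fn h(x+i)}=\frac{1}{\fnmar h(x)}, \]
where $\fn h(x+i)>0$ for all $i$, so the nonzero branch of the harmonic mean applies. Symmetrically, $\fnmap h(x)=1/\fnmar g(x)$. Since $t\mapsto 1/t$ is strictly decreasing on $\mR_{>0}$, the inequality $\fnmap g(x)\le\fnmap g(y)$ holds if and only if $\fnmar h(x)\ge\fnmar h(y)$, and the latter is the definition of $\as x\preceq_{\dnSfr h {\msI_0}}\as y$. The second equivalence follows the same way.

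The only delicate point is positivity. We need $\fn h>0$ so that the harmonic mean is never in its "otherwise $0$" case. This is exactly why the statement assumes $h\in\msI_0$ rather than $\msI$.
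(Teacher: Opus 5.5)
Your proof is correct and follows the paper's route. The paper likewise cites Theorem~\ref{equivalent_preorder_pairs_Z_m}, uses $\as x,\as y\in\ap \mZ m\subset\mZ^m$, and notes the membership of the extended functions. You are slightly more careful: you check that $\fn h = 1/\fn g$ survives the extension, and you state the memberships correctly as $\fn g\in\msQ$, $\fn h\in\msT_0$ (the paper's proof writes $\msM$ and $\msI_0$ there).
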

\begin{proof}
Confer Theorem~\refs{equivalent_preorder_pairs_Z_m}. We again have $\as x, \as y\in\ap \mZ m\subset\mZ^m$, \ $\fn g\in\msM$,\lb and $\fn h\in\msI_0$.
\end{proof}
\ \\[-1ex]

We pursued pseudometrics on $\mZ^m$ and their associated preorders according to\lb Corollary~\refs{induced_pseudometric_preorder_Z_m}. Based on suitable generating functions and their supplements, we\lb examined sets of pseudometrics that induce sets of preorders on $\mZ^m$, three of which\lb coincide for appropriate function-triples. These triples exist by Theorem~\refs{equivalent_preorder_tuple_Z_m}. They also induce equivalent pseudometrics and preorders on $\mZ$ according to Proposition~\refs{function_triple_Z}. Furthermore, two functions of such a triple induce two pairs of preorders on $\mZ^m$,\lb each of which is equivalent by Theorem~\refs{equivalent_preorder_pairs_Z_m}.
\ \\[-1ex]

The application of these results to $\ap \mZ m$ and the projection onto admissible arithmetic functions was summarised in the Corollaries~\refs{induced_pseudometric_preorder_SCI}, \ \refs{admissible_function_tuple_SCI}, and \refs{admissible_function_pairs_SCI}. We conclude this section with a corresponding diagram to complement Figure~\refs{dia2} of Section~\refs{MO_Z}.

\begin{figure}[H]
  \centering\vspace*{-3mm}
\begin{gather*}\xymatrixcolsep{6pc}\xymatrixrowsep{1pc}\xymatrix{
	& \dnSfp f \msA
		\ar[r]^-{\ref{induced_pseudometric_preorder_SCI}}
	     	& \preceq_{\dnSfp f \msA}
			& \ar@<-35pt>@{<->}@/^0.75pc/[dddd]^-{\ref{admissible_function_tuple_SCI}} \\
 f\in\msA
	\ar@{<->}[ddd]_-{\ref{admissible_function_tuple_SCI}}
    	\ar[ru]^-(.61){\ref{induced_pseudometric_preorder_SCI}}
	\ar[r]^-(.5725){\ref{induced_pseudometric_preorder_SCI}}
   	\ar[rd]^-(.5425){\ref{induced_pseudometric_preorder_SCI}}
	& \dnSft f \msA
		\ar[r]^-{\ref{induced_pseudometric_preorder_SCI}}
		& \preceq_{\dnSft f \msA}
			& \\
	& \dnSfr f \msA
		\ar[r]^-{\ref{induced_pseudometric_preorder_SCI}}
	     	& \preceq_{\dnSfr f \msA}
	     		& \\
	& \dnSfp g \msM
		\ar[r]^-{\ref{induced_pseudometric_preorder_SCI}}
	     	& \preceq_{\dnSfp g \msM}
			& \ar@<-70pt>@{<->}@/^1.75pc/[ddddd]^-(.495){\ref{admissible_function_pairs_SCI}} \\
g=\e^{\ f}\in\msM
	\ar@{<->}[ddd]_-{\ref{admissible_function_tuple_SCI}}
	\ar@{<->}[ddd]^-{\ref{admissible_function_pairs_SCI}}
   	\ar[ru]^-(.545){\ref{induced_pseudometric_preorder_SCI}}
	\ar[r]^-(.475){\ref{induced_pseudometric_preorder_SCI}}
   	\ar[rd]^-(.5025){\ref{induced_pseudometric_preorder_SCI}}
	& \dnSft g \msM
		\ar[r]^-{\ref{induced_pseudometric_preorder_SCI}}
		& \preceq_{\dnSft g \msM}
			& \ar@<-35pt>@{<->}@/^0.75pc/[ddd]^-{\ref{admissible_function_tuple_SCI}} \\
	& \dnSfr g \msM
		\ar[r]^-{\ref{induced_pseudometric_preorder_SCI}}
	     	& \preceq_{\dnSfr g \msM}
			& \ar@<-62pt>@{<->}@/^0.2pc/[d]_-{\ref{admissible_function_pairs_SCI}} \\
	& \dnSfp h {\msI_0}
		\ar[r]^-{\ref{induced_pseudometric_preorder_SCI}}
	     	& \preceq_{\dnSfp h {\msI_0}}
			& \\
h=\e^{-f}\in\msI_0
   	\ar[ru]^-(.5475){\ref{induced_pseudometric_preorder_SCI}}
	\ar[r]^-(.4675){\ref{induced_pseudometric_preorder_SCI}}
   	\ar[rd]^-(.5225){\ref{induced_pseudometric_preorder_SCI}}
	& \dnSft h {\msI_0}
		\ar[r]^-{\ref{induced_pseudometric_preorder_SCI}}
		& \preceq_{\dnSft h {\msI_0}}
			& \\
	& \dnSfr h {\msI_0}
		\ar[r]^-{\ref{induced_pseudometric_preorder_SCI}}
	     	& \preceq_{\dnSfr h {\msI_0}}
			& \\
	& \dnSfp h {\msI\setminus\msI_0}
		\ar[r]^-{\ref{induced_pseudometric_preorder_SCI}}
	     	& \preceq_{\dnSfp h {\msI\setminus\msI_0}}
			& \\
h\in\msI\setminus\msI_0
   	\ar[ru]^-(.57){\ref{induced_pseudometric_preorder_SCI}}
	\ar[r]^-(.53){\ref{induced_pseudometric_preorder_SCI}}
   	\ar[rd]^-(.505){\ref{induced_pseudometric_preorder_SCI}}
	& \dnSft h {\msI\setminus\msI_0}
		\ar[r]^-{\ref{induced_pseudometric_preorder_SCI}}
	     	& \preceq_{\dnSft h {\msI\setminus\msI_0}}
			& \\
	& \dnSfr h {\msI\setminus\msI_0}
		\ar[r]^-{\ref{induced_pseudometric_preorder_SCI}}
	     	& \preceq_{\dnSfr h {\msI\setminus\msI_0}}
			& \\
}\end{gather*}
   \caption{Relationships between admissible arithmetic functions, induced\\ pseudometrics, and corresponding preorders on $\ap \mZ m$.} \label{dia4}
\end{figure}

\ \\\\


\section*{Concluding remarks}
\addcontentsline{toc}{section}{\phj Concluding remarks\phj}
\stepcounter{section}

In the preceding sections, we have investigated pseudometrics and preorders on $\mZ$, $\mZ^m$ and in particular on $\ap \mZ m$ induced by arithmetic functions. The use of arithmetic\lb functions was made possible by Lemma~\refs{AF_extend}, which describes the general case.\lb The applied transformation $x\mapsto\bigl(1+ (x-1) \modo n\bigr)$ projects $\mZ$ onto $\mZ_n\simeq\mZ/n\mZ$ and consequently $\mZ^m$ onto $\mZ_n^m$.

Our main focus was on admissible arithmetic functions, since additive and\lb multiplicative arithmetic functions are of particular interest in number theory. These represent special cases here. Many results are also applicable to the more general case.\\

Starting from the investigation of centred pseudometrics in general pseudometric spaces, Proposition~\refs{function_triple} pointed to function triples that lead to equivalent preorders on their space. A special application to admissible arithmetic functions is Proposition~\refs{function_triple_Z}, which describes a preorder-preserving isomorphism between $\msA$, $\msM$ and $\msI_0$.

These functional triples also play an important role in higher-dimensional spaces\lb as proved in Theorem~\refs{equivalent_preorder_tuple_Z_m}  and Corollary~\ref{admissible_function_tuple_SCI}\,. Due to the Pythagorean means used, they cause additional preorder-preserving isomorphisms here according to Theorem~\refs{equivalent_preorder_pairs_Z_m} and Corollary~\ref{admissible_function_pairs_SCI}\,.\\

Regarding admissible arithmetic functions, there are at most five distinct preorders on $\ap \mZ m$ induced by a corresponding function triple $f\in\msA$, $g\in\msM$, and $h\in\msI_0$,\lb as shown in Figure~\refs{dia4}. Three functions and three types of mean values result in nine preorders. There are three sets of preorders, each of whose elements are pairwise equivalent. Furthermore, two single preorder-sets still remain.
\begin{itemize}
    \item \quad $\left\{\preceq_{\dnSfp f \msA}\ ,\enspace\preceq_{\dnSft g \msM}\ ,\enspace\preceq_{\dnSft  h {\msI_0}}\right\}$, \quad cf. $
\displaystyle{\text{Corollary~\ref{admissible_function_tuple_SCI}}}$\ , \\[-0.5ex]
    \item \quad $\left\{\preceq_{\dnSfp g \msM}\ ,\enspace\preceq_{\dnSfr h {\msI_0}}\right\}$, \quad cf. $\displaystyle{\text{Corollary~\ref{admissible_function_pairs_SCI}}}$\ , \\[-0.5ex]
    \item \quad $\left\{\preceq_{\dnSfp h {\msI_0}}\ ,\enspace\preceq_{\dnSfr g \msM}\right\}$, \quad cf. $\displaystyle{\text{Corollary~\ref{admissible_function_pairs_SCI}}}$\ , \\[-0.5ex]
    \item \quad $\left\{\preceq_{\dnSft f \msA}\right\}$\ , \\[-0.5ex]
    \item \quad $\left\{\preceq_{\dnSfr f \msA}\right\}$\ .
\end{itemize}

The number of actually distinct preorders depends on the length $m$ of the sequences, the modulus $n$ and, most importantly, on the arithmetic functions under consideration. For the following discussion, we require $m,n>1$. The modulus $n=1$ was already excluded in Lemma~\refs{AF_extend}. The case $m=1$ leads to $\ap \mZ 1 \simeq \mZ$ and all regarded preorders are equivalent by Proposition~\refs{function_triple_Z}.

There is a wide variety of possible scenarios regarding the number of different\lb preorders. Smaller sequence spaces can already reach the maximum number of five distinct preorders, while larger spaces can include more equivalent preorders. Here are some examples.
\ \\[-2ex]

\begin{exam}
$n=6$, $m=2$, $\ld\in\msA$.

The Logarithmic Derivative $\ld$ \cite{Ufnarovski_Ahlander_2003, Lava_Balzarotti_2013, Ziller_2023} is the solution of the functional equation $\ld(a\cdot b)=\ld(a)+ \ld(b)$ for $a,b\in\mN$ where $\ld(1)=0$ and $\ld(p)=1/p$ for $p\in\mP$. Using the standard prime factorisation $x=\prod_{i=1}^\infty p_i^{\alpha_i}, p_i\in\mP,\alpha_i\in\mN_0$, the explicit solution is $\ld(x) = \sum_{i=1}^{\infty} \frac{\alpha_i}{p_i}$. The function is totally additive.

The functions $\e^{\ld}(x)=\e^{\ld(x)}=\e^{\sum_{i=1}^{\infty} \frac{\alpha_i}{p_i}}\in\msM$ and $\e^{-\ld}(x)=\e^{-\ld(x)}=\e^{-\sum_{i=1}^{\infty} \frac{\alpha_i}{p_i}}\in\msI_0$\lb complete the associated triple. This example includes all 5 potentially distinct\lb preorders. The basic data in the following table includes function values and the\lb corresponding moving averages.

\begin{center}
\begin{tabular}{|rr|rrrrr|}
  \hline
  \rule{0pt}{18pt }
  $x$&\small{$\ld^{\Around{\,6\,}}(x)$}&
  \small{$\ld^{\Around{\,6\,}(2,+)}_{MA} (x)$} &
  \small{$\left(\e^{\ld}\right)^{\Around{\,6\,}(2,+)}_{MA} (x)$}&
  \small{$\left(\e^{-\ld}\right)^{\Around{\,6\,}(2,+)}_{MA} (x)$}&
  \small{$\ld^{\Around{\,6\,}(2,\ldot)}_{MA} (x)$}&
  \small{$\ld^{\Around{\,6\,}(2,\rts)}_{MA} (x)$}\\[2pt]
  \hline
  \rule{0pt}{14pt}
1&0.0000&0.2500&1.3244&0.8033&0.0000&0.0000\\
2&0.5000&0.4167&1.5222&0.6615&0.4082&0.4000\\
3&0.3333&0.6667&2.0569&0.5422&0.5774&0.5000\\
4&1.0000&0.6000&1.9698&0.5933&0.4472&0.3333\\
5&0.2000&0.5167&1.7612&0.6267&0.4082&0.3226\\
6&0.8333&0.4167&1.6505&0.7173&0.0000&0.0000\\[2pt]
  \hline
\end{tabular}
\end{center}
\ \\
A few selected relations illustrate the diversity of all preorders.

\begin{center}
\begin{tabular}{|rr|ccccc|}
  \hline
  \rule{0pt}{14pt}
  &&$\as x$&$\as x$&$\as x$&$\as x$&$\as x$\\[-1pt]
  $x$&$y$&
  $\preceq_{d^{\around{\,6\,}(2,+)}_{(\ld,\,\msA)}}$&
  $\preceq_{d^{\around{\,6\,}(2,+)}_{({\e^{\ld}},\,\msM)}}$&
  $\preceq_{d^{\around{\,6\,}(2,+)}_{({\e^{-\ld}},\,{\msI_0})}}$&
  $\preceq_{d^{\around{\,6\,}(2,\ldot)}_{(\ld,\,\msA)}}$&
  $\preceq_{d^{\around{\,6\,}(2,\rts)}_{(\ld,\,\msA)}}$\\[2pt]
  &&$\as y$&$\as y$&$\as y$&$\as y$&$\as y$\\[3pt]
  \hline
  \rule{0pt}{14pt}
6&2&\cp{\ true}&\cl{ false}&\cw{\ true}&\cw{\ true}&\cw{\ true}\\
2&6&\cp{\ true}&\cp{\ true}&\cl{ false}&\cw{ false}&\cw{ false}\\
5&2&\cp{ false}&\cp{ false}&\cp{ false}&\cl{\ true}&\cw{\ true}\\
4&2&\cp{ false}&\cp{ false}&\cp{ false}&\cp{ false}&\cl{\ true}\\[2pt]
  \hline
\end{tabular}
\end{center}
\end{exam}
\ \\[-3ex]

\begin{exam} \label{exampleA}
$n=13$ , $m=2$, $\nd\in\msM$.

The function $\nd$ is the number of divisors of $x$. It  is multiplicative. The associated functions are $\log(\nd(x))\in\msA$ and $\frac{1}{\nd(x)}\in\msI_0$.

The following preorders are also equivalent here. For $x,y\in\mZ$, we have
\ \\[-1ex]
\[ \as x \preceq_{d^{\around{13}(2,+)}_{({\frac{1}{\nd(x)}},\,{\msI_0})}} \as y \quad\Longleftrightarrow\quad 
  \as x \preceq_{d^{\around{13}(2,\ldot)}_{(\log(\nd(x)),\,\msA)}} \as y \quad\Longleftrightarrow\quad 
  \as x \preceq_{d^{\around{13}(2,\rts)}_{(\log(\nd(x)),\,\msA)}} \as y. \]
\ \\[-1ex]
Finally, there are three distinct preorders left.
 
\begin{center}
\begin{tabular}{|rr|ccccc|}
  \hline
  \rule{0pt}{14pt}
  &&$\as x$&$\as x$&$\as x$&$\as x$&$\as x$\\[-1pt]
  $x$&$y$&
  $\preceq_{d^{\around{13}(2,+)}_{(\log(\nd(x)),\,\msA)}}$&
  $\preceq_{d^{\around{13}(2,+)}_{(\nd,\,\msM)}}$&
  $\preceq_{d^{\around{13}(2,+)}_{({\frac{1}{\nd(x)}},\,{\msI_0})}}$&
  $\preceq_{d^{\around{13}(2,\ldot)}_{(\log(\nd(x)),\,\msA)}}$&
  $\preceq_{d^{\around{13}(2,\rts)}_{(\log(\nd(x)),\,\msA)}}$\\[3pt]
  &&$\as y$&$\as y$&$\as y$&$\as y$&$\as y$\\[3pt]
  \hline
  \rule{0pt}{14pt}
11& 8&\cp{\ true}&\cl{ false}&\cw{\ true}&\cw{\ true}&\cw{\ true}\\
 8&11&\cp{\ true}&\cp{\ true}&\cl{ false}&\cw{ false}&\cw{ false}\\[2pt]
  \hline
\end{tabular}
\end{center}
\end{exam}
\ \\[-2.5ex]

\begin{exam} \label{exampleB}
$n=9$ , $m=4$, $\omega\in\msA$.

The number of distinct prime divisors $\omega(x)$ is additive. The associated functions are $\e^{\omega(x)}\in\msM$ and $\e^{-\omega(x)}\in\msI_0$. In this case, all related preorders coincide.
\end{exam}
\ \\[-2.5ex]

There are a few general statements we can make about the number of distinct\lb preorders if $m$ remains fixed and $n$ increases. \\

The crucial interval here is $1\le z\le n-m+1$. Potential differences between\lb preorders in this area persist even with increasing $n$. With a larger $n$, even more\lb different preorders are possible. All of this applies, among other things, to Example~\refs{exampleA}.

If there were no differences between the preorders in the range $1\le z\le n-m+1$,\lb i.e., if all preorders were equivalent, this also would remain the case even with\lb increasing $n$. However, values in later positions can again cause other relationships.

\begin{lemm} \label{distinct_order}
Let $n\in\mN>1$, $m\in\mN\le n$, $f\in\msA$, $g\in\msM$, and $h\in\msI_0$. Furthermore, let $f$, $g$, and $h$ form a triple according to Proposition~\refs{function_triple_Z}, i.e. $f\in\msA$, \ $g=\e^{\ f}\in\msM$ and $h=\e^{-f}\in\msI_0$.

If the two preorders 
$\preceq_{\dd n 1}$ and $\preceq_{\dd n 2}$,\\
\hspace*{10mm} where $(\mathpzc{f}_1,\mathcal{S}_1),(\mathpzc{f}_2,\mathcal{S}_2)\in\{(f,\msA),(g,\msM),(h,\msI_0)\}$ and $\mathcal{O}_1,\mathcal{O}_2\in\{+,\ldot,\rts\}$,\\
are not equivalent on $\ap \mZ m$ at two positions $x,y\in\mN\le n-m+1$,\\
\hspace*{10mm} i.e. $\as x \preceq_{\dd n 1} \as y$ and not $\as x \preceq_{\dd n 2} \as y$,\\ 
then the preorders  $\preceq_{\dd k 1}$ and $\preceq_{\dd k 2}$ are not equivalent on $\ap \mZ m$ for all $k\ge n$.
\end{lemm}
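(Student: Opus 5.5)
The key observation is that for arguments $x\in\mN$ with $1\le x\le n-m+1$, the whole window $x,x+1,\dots,x+m-1$ lies in $\mZ_n=\{1,\dots,n\}$. Then the reduction $z\mapsto 1+(z-1)\modo n$ acts as the identity on every member of $\as x$. So for $k\ge n$, each $x\le n-m+1\le k-m+1$ and every $0\le i\le m-1$ satisfy
\[ \mathpzc{f}^{\around{k}}(x+i)=\mathpzc{f}(x+i)=\mathpzc{f}^{\around{n}}(x+i) \]
for every $\mathpzc{f}\in\{f,g,h\}$. I would state this first as a small auxiliary observation, using only Lemma~\refs{AF_extend}.

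Next I would transfer this to the moving averages. By Definition~\refs{moving_averages_f}, or equivalently Corollary~\refs{MA_fn}, each of $\mathpzc{f}^{\around{k}(m,\mathcal{O})}_{MA}(x)$ for $\mathcal{O}\in\{+,\ldot,\rts\}$ is a fixed function of the $m$ values $\mathpzc{f}^{\around{k}}(x+i)$. The case distinction in the harmonic mean also depends only on these values. Hence for all $x\in\mN\le n-m+1$ and all $k\ge n$,
\[ \mathpzc{f}^{\around{k}(m,\mathcal{O})}_{MA}(x)=\mathpzc{f}^{\around{n}(m,\mathcal{O})}_{MA}(x). \]

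Then I would invoke Corollary~\refs{induced_pseudometric_preorder_SCI}. Each preorder $\preceq_{\dd k j}$ is decided by comparing moving-average values at $x$ and $y$: with $\le$ for $\msA$ and $\msM$, and with $\ge$ for $\msI_0$. Since the assumed positions $x,y$ both satisfy $x,y\le n-m+1$, the truth values of $\as x \preceq_{\dd k 1} \as y$ and $\as x \preceq_{\dd k 2} \as y$ coincide with those for modulus $n$. So the first relation holds and the second fails for every $k\ge n$, which means the two preorders are not equivalent on $\ap \mZ m$. The hypothesis $m\le n$ guarantees that the range $1\le x\le n-m+1$ is nonempty, and $k\ge n>1$ keeps $k$ an admissible modulus.

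The only step requiring care is the first one. One must check that the windows do not wrap around modulo $k$, i.e. that $x+m-1\le n\le k$ for $x\le n-m+1$. One must also check that the definition of $\mathpzc{f}^{\around{k}}$ uses the same underlying arithmetic function $\mathpzc{f}$ independently of the modulus. Both are immediate, so I expect no real obstacle. The proof is essentially a locality argument: the comparison depends only on unreduced function values inside $\mZ_n$.
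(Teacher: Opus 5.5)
Your proposal is correct and follows the paper's proof: for $1\le z\le n-m+1$ the window $z,\dots,z+m-1$ lies in $\mZ_n$, so reduction modulo any $k\ge n$ is the identity on it. Hence the moving averages for moduli $n$ and $k$ agree at $x$ and $y$, and the witnessing pair carries over unchanged. Your remarks on the harmonic-mean case distinction and on the reversed inequality for $\msI_0$ only make the same argument slightly more explicit.
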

\begin{proof}
In Corollary~\refs{induced_pseudometric_preorder_SCI}, relations of the type $\as x \preceq_{\dd n 1} \as y$ were defined as\lb inequalities between the values of the corresponding moving average $\ff n 1$ of the function $\fn {\mathpzc{f}_1}$. For each $z\in\mN\le n-m+1$, the moving average $\ff n 1(z)$\lb depends by Definition~\refs{moving_averages_f} only on the function values $\fn {\mathpzc{f}_1} (i)$ for $i=z,\dots, z+m-1$,\lb\\[-2ex]
i.e. $i\le z+m-1\le n-m+1+m-1=n$.\\[-1ex]

Then $\ff n 1(z)=\ff k 1(z)$ for all $k\ge n$ because\\[-1ex]
\[\fn {\mathpzc{f}_1} (i)=\mathpzc{f}_1\bigl(1+ (i-1) \modo n\bigr)=\mathpzc{f}_1 (i)=
\mathpzc{f}_1\bigl(1+ (i-1) \modo k\bigr)=\mathpzc{f}_1^{\around{k}} (i)\]
according to Lemma~\refs{AF_extend}.\\[-2ex]
 
The same applies analogously to $\as x \preceq_{\dd n 2} \as y$.
\end{proof}

For monotonic functions, the corresponding moving averages are also monotonic. This results in a simplification. So, there can be no differences between the preorders within the specified interval even for increasing $n$. This turns out to be a special case of Lemma~\refs{distinct_order} for equivalent preorders.

The function $\omega(z)$ in Example~\refs{exampleB} is not monotonic in general. Nevertheless, the moving averages are monotonic for $1\le z\le 9-4+1=6$. This is sufficient to draw the same conclusions here.

\begin{lemm} \label{monotonic_function}
Let $n\in\mN>1$, $m\in\mN\le n$, $f\in\msA$, $g\in\msM$, and $h\in\msI_0$. Furthermore, let $f$, $g$, and $h$ form a triple according to Proposition~\refs{function_triple_Z}, i.e. $f\in\msA$, \ $g=\e^{\ f}\in\msM$ and $h=\e^{-f}\in\msI_0$.

If the functions $f$, $g$, and $h$ are monotonic, then the preorders 
$\preceq_{\dd k 1}$ and $\preceq_{\dd k 2}$,\\
\hspace*{10mm} where $(\mathpzc{f}_1,\mathcal{S}_1),(\mathpzc{f}_2,\mathcal{S}_2)\in\{(f,\msA),(g,\msM),(h,\msI_0)\}$ and $\mathcal{O}_1,\mathcal{O}_2\in\{+,\ldot,\rts\}$,\\
are equivalent on $\ap \mZ m$ at arbitrary positions $x,y\in\mN\le n-m+1$,\\
\hspace*{10mm} i.e. $\as x \preceq_{\dd k 1} \as y \Longleftrightarrow \as x \preceq_{\dd k 2} \as y$,\\
for all $k\ge n$.
\end{lemm}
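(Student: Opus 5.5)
The plan has two steps: reduce the problem for every modulus $k\ge n$ to the single modulus $n$ on the window, then use monotonicity to show that all nine moving averages order the window positions in the same way.

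\textbf{Step 1: reduction to modulus $n$.} I reuse the argument of Lemma~\refs{distinct_order}. For $z\in\mN$ with $z\le n-m+1$, every argument $i=z,\dots,z+m-1$ satisfies $1\le i\le n\le k$. Hence $\mathpzc{f}^{\around{k}}(i)=\mathpzc{f}(i)$ by Lemma~\refs{AF_extend}. So for each of the nine combinations of $(\mathpzc{f},\mathcal{S})$ and $\mathcal{O}$, the moving average $\mathpzc{f}^{\around{k}(m,\mathcal{O})}_{MA}(z)$ equals the plain moving average $\mathpzc{f}^{(m,\mathcal{O})}_{MA}(z)$ of Definition~\refs{moving_averages_f}, taken without any modular reduction. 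It therefore suffices to show that, for $x,y$ in the window, every one of these plain moving averages induces the same comparison between $x$ and $y$.

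\textbf{Step 2: monotonicity of the moving averages.} Assume $f$ is monotonic. Then $g=\e^{f}$ is monotonic in the same direction and $h=\e^{-f}$ in the opposite direction. If $\mathpzc{f}$ is nondecreasing, then for $x\le y$ we have $\mathpzc{f}(x+i)\le\mathpzc{f}(y+i)$ termwise. The arithmetic, geometric and harmonic means are all monotone in each argument, including the harmonic case with the convention value $0$ when some entry is $0$, as in the proof of Lemma~\refs{pseudometric_Z_m}. Hence all three moving averages are nondecreasing in $z$ on the window. By Corollary~\refs{induced_pseudometric_preorder_SCI}, the preorders for $f$ and $g$ compare values with $\le$, while those for $h$ compare with $\ge$ and $h$ is decreasing. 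So for $x\le y$ every one of the nine relations gives $\as x\preceq\as y$.

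\textbf{Main obstacle: ties.} The converse direction is the difficult part, because weak monotonicity allows ties. When $x>y$, one preorder may record $\as x\preceq\as y$ because of equal averages while another records strict inequality. To handle this I would interpret monotonic as strictly monotonic on the relevant range. Then each moving average is strictly monotone in $z$, since a strict increase in one term changes each Pythagorean mean strictly. One exception must be kept in mind: the geometric and harmonic means vanish when $1$ lies in the window. Since $f(1)=0$ and $f$ is strictly monotonic, it must be increasing from $f(1)=0$, so only $z=1$ is affected. Even there, the mean at $z=1$ is strictly smaller than the mean at any later $z$, because those windows contain no $1$ and have all terms positive. With strictness established, each of the nine relations reduces to $x\le y$ (or its equivalent) on the window, so any two of them are equivalent there. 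If ties are allowed, I would state the result with the remark that equivalence holds precisely when all averages share the same tie pattern, and in that case check that weakly monotone sequences produce ties in the means only when the corresponding terms are constant.
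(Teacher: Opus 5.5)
Your Steps 1 and 2 are the paper's proof. Step 1 is the observation, taken from Lemma~\ref{distinct_order}, that no modular reduction happens for arguments $1\le i\le n\le k$. Step 2 is that all nine moving averages are monotone on the window, reversed for $h$. The paper then goes straight from monotonicity to $A_1(x)\le A_1(y)\iff A_2(x)\le A_2(y)$ for two moving averages $A_1,A_2$, without discussing ties. So your worry is legitimate, and the paper does not address it either.

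However, reading ``monotonic'' as ``strictly monotonic'' proves the lemma only under a stronger hypothesis than stated. Your fallback claim is also false for the geometric and harmonic averages: two windows that both contain a zero entry tie at $0$ without being termwise equal. For example, with $m=2$ and $f(1)=f(2)=0<f(3)$, the geometric averages at $z=1,2$ both vanish, while the arithmetic ones are $0$ and $f(3)/2$.

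The gap closes within the stated hypotheses by using additivity.
\begin{itemize}
\item If $f$ is nonincreasing, then $f\equiv0$, because $f\ge0=f(1)$. All nine preorders are then the total relation.
\item If $f$ is nondecreasing with $f(2)=0$, then $f(2k)=f(2)+f(k)=f(k)$ for odd $k$. By monotonicity $f$ is constant on each interval $[k,2k]$. For $k\ge3$ these intervals overlap, so $f\equiv c$ on $[3,\infty)$. Then $f(15)=f(3)+f(5)=2c$ forces $c=0$, so again $f\equiv0$.
\item If $f(2)>0$, take $2\le x<y$ in the window. All entries of the windows of $f$, $g$, $h$ are positive and termwise comparable. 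For such vectors, each Pythagorean mean ties exactly when the vectors coincide.
\item The window at $z=1$ ties with no later window. Its first entry is strictly separated from the first entry of any later window, since $f(1)=0<f(2)\le f(y)$. Also, its geometric and harmonic $f$-averages are $0$, while the later ones are positive.
\end{itemize}
Hence for $x<y$ in the window, all nine relations give $\langle x\rangle\preceq\langle y\rangle$. All nine give $\langle y\rangle\preceq\langle x\rangle$ exactly when $f(x+i)=f(y+i)$ for all $i$. So the nine preorders coincide.

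Alternatively, Erd\H{o}s' theorem says every monotone additive function equals $c\log x$, which gives your strict monotonicity directly whenever $f\not\equiv0$.
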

\begin{proof}
The transformations $g=\e^{\ f}$ and $h=\e^{-f}$ are bijective because $\e^x$ is strictly monotonic. If one of these functions is monotonic, then all other functions are also monotonic. Without loss of generality, we assume that the functions are not constant. Then $f$ and $g$ must be monotonically increasing, and $h$ is monotonically decreasing. Therefore, the corresponding moving averages are also monotonic. \\[-1ex]

As proven above, $\ff n 1(z)=\ff k 1(z)$ for all $k\ge n$ and  $z\in\mN\le n-m+1$.\lb \\[-2ex]
By exploiting the monotonicity, we obtain \\[-3ex]
\begin{align*}
\as x \preceq_{\dd k 1} \as y &\Longleftrightarrow \ff k 1(x) \le \ff k 1(y)\\[-1ex]
&\Longleftrightarrow \ff k 2(x) \le \ff k 2(y) \Longleftrightarrow \as x \preceq_{\dd k 2} \as y. \\[-4ex]
\end{align*}
for $\mathpzc{f}_1,\mathpzc{f}_2\in\{\msA\cup\msM\}$. If $\mathpzc{f}_1\in\msI_0$ or $\mathpzc{f}_2\in\msI_0$, the corresponding $\le$ changes to $\ge$.
\end{proof}
\ \\[-2ex]

We would like to draw attention to another important topic related to the study of integer sequences, which could be investigated using the pseudometrics and\lb preorders presented in this paper: the search for sequences with extreme properties that can be expressed as function values.

The periodicity of the moving averages established in Lemma~\refs{periodic_fn_MA} means that\lb the number of existing values is finite. At most $n$ values of these can be different. Therefore, extreme values also exist for the distances of the pseudometrics defined in Corollary~\refs{induced_pseudometric_preorder_SCI} to the centre $(1)\in\mZ^m$.

In simple cases, general statements can be made about extreme values of moving averages. We build on the last lemma and examine monotonic functions as a first example. It is not limited to admissible functions. The following results apply to all generating and supplementary functions.

\begin{lemm} \label{monotonic_extreme}
Let $n\in\mN>1$, $m\in\mN\le n$, $f\in\msG$, $g\in\msQ$, and $h\in\msT_0$. If the functions $f$, $g$, and $h$ are monotonic, then
\begin{align*}
\max_{{x\in\mZ_n}}\ f^{\Around{n}(m,\mathcal{O}_1)}_{MA}(x)&=f^{\Around{n}(m,\mathcal{O}_1)}_{MA}(n-m+1), \\
\min_{{x\in\mZ_n}}\ f^{\Around{n}(m,\mathcal{O}_1)}_{MA}(x)&=f^{\Around{n}(m,\mathcal{O}_1)}_{MA}(1), \\
\max_{{x\in\mZ_n}}\ g^{\Around{n}(m,\mathcal{O}_2)}_{MA}(x)&=g^{\Around{n}(m,\mathcal{O}_2)}_{MA}(n-m+1), \\
\min_{{x\in\mZ_n}}\ g^{\Around{n}(m,\mathcal{O}_2)}_{MA}(x)&=g^{\Around{n}(m,\mathcal{O}_2)}_{MA}(1), \\
\max_{{x\in\mZ_n}}\ h^{\Around{n}(m,\mathcal{O}_3)}_{MA}(x)&=h^{\Around{n}(m,\mathcal{O}_3)}_{MA}(1), \text{\qquad and} \\
\min_{{x\in\mZ_n}}\ h^{\Around{n}(m,\mathcal{O}_3)}_{MA}(x)&=h^{\Around{n}(m,\mathcal{O}_3)}_{MA}(n-m+1),
\end{align*}
where $\mathcal{O}_1,\mathcal{O}_2,\mathcal{O}_3\in\{+,\ldot,\rts\}$. 
\end{lemm}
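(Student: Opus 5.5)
The plan is to reduce everything to one combinatorial observation: every moving window of $\fn f$ of length $m$ is a multiset of $m$ values from $\{f(1),\dots,f(n)\}$ taken at distinct positions of $\mZ_n$. The three Pythagorean means are monotone non-decreasing in each argument. For the harmonic mean this includes the convention that the value is $0$ as soon as one entry is $0$, which is consistent with monotonicity on $\mR_{\ge0}$. Hence the extremal windows are those consisting of the $m$ largest and of the $m$ smallest values.

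First I fix the direction of monotonicity. Since $f\in\msG$ gives $f\ge0$ with $f(1)=0$, a non-increasing $f$ must vanish identically on $\mN$, and then all claims are trivial. So we may assume $f$ is non-decreasing on $\mZ_n$. Likewise $g\ge1=g(1)$ forces $g$ to be non-decreasing, and $0<h\le1=h(1)$ forces $h$ to be non-increasing, apart from the constant cases. By Lemma~\refs{periodic_fn_MA} the moving averages are $n$-periodic, so it suffices to maximise and minimise over $x\in\mZ_n$. By Lemma~\refs{AF_extend}, the window starting at $x$ consists of the values $f(1+(x+i-1)\modo n)$ for $i=0,\dots,m-1$. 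Because $m\le n$, these are evaluations at $m$ distinct points of $\mZ_n$.

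Next I compare windows after sorting. Let $v_1\le\dots\le v_m$ be the sorted values of an arbitrary window. Since $f$ is non-decreasing on $\mZ_n$, the $j$-th smallest value among any $m$ distinct points is at most the $j$-th smallest of $f(n-m+1),\dots,f(n)$, and at least the $j$-th smallest of $f(1),\dots,f(m)$. The window at $x=n-m+1$ is exactly $\{n-m+1,\dots,n\}$ and does not wrap around. The window at $x=1$ is exactly $\{1,\dots,m\}$. Each mean is symmetric in its arguments and monotone in each one. Applying this with the termwise comparison of sorted tuples yields the four statements for $f$ and $g$, for each $\mathcal{O}\in\{+,\ldot,\rts\}$. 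For $h$ the same argument with the inequalities reversed gives the maximum at $x=1$ and the minimum at $x=n-m+1$.

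The main obstacle I expect is the wrap-around windows with $x>n-m+1$. These mix large values near $n$ with small values near $1$, so a naive shift-by-one comparison of $\fnmap f(x)$ with $\fnmap f(x+1)$ fails. This is why I argue via the sorted-multiset (majorisation-type) comparison instead of monotonicity of the moving average in $x$. A smaller point needing care is the harmonic mean with zero entries. I will check that its value $0$ is still monotone: any window dominated termwise by another has a zero entry whenever the dominating window does.
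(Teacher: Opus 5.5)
Your proof is correct, but it is organised differently from the paper's.

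\textbf{The paper's route.} It assumes non-constant functions, so $f,g$ are increasing and $h$ is decreasing, and works out only the arithmetic mean of $f$. It takes the minimum at $x=1$ as evident from monotonicity. It proves the maximum at $x=n-m+1$ by contradiction, with a case split:
\begin{itemize}
\item For $1\le y<n-m+1$ it compares the window values at $y+i$ and $n-m+1+i$ term by term.
\item For a wrap-around window, $n-m+1<y\le n$, it splits the window into the blocks $\{y,\dots,n\}$ and $\{1,\dots,y+m-1-n\}$. It then cancels the common block $\{y,\dots,n\}$ against the window at $n-m+1$. What remains is an inequality between the values on $\{1,\dots,y+m-1-n\}$ and on $\{n-m+1,\dots,y-1\}$, which contradicts monotonicity.
\end{itemize}
The geometric and harmonic cases are left as ``analogous''.

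\textbf{Your route.} You replace this with one observation. If $s_1<\dots<s_m$ are the positions of any window in $\mathbb{Z}_n$, then $j\le s_j\le n-m+j$. Hence the sorted window values are sandwiched termwise between those of the windows at $1$ and at $n-m+1$. After that you only use that each mean is symmetric and coordinatewise non-decreasing.

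\textbf{What each approach buys.} Your argument is uniform:
\begin{itemize}
\item It needs no case distinction and no cancellation step. For the geometric and harmonic means, cancellation would mean dividing out common factors or reciprocal sums, with zero values handled separately.
\item It treats the harmonic-mean zero convention explicitly.
\item It actually proves the minimum claims for wrap-around windows, which the paper only asserts.
\item It works for any symmetric, monotone aggregation of the window values.
\end{itemize}
The paper's version is more hands-on with explicit sums, but is detailed only for the arithmetic mean of $f$.

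One small improvement: state the inequality $j\le s_j\le n-m+j$ explicitly, together with the remark that $f(s_j)$ is the $j$-th smallest value because $f$ is non-decreasing. That is the entire content of your ``sorted comparison'' step.
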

\begin{proof}
Without loss of generality, we again assume non-constant functions. Then $f$ and $g$ are monotonically increasing, and $h$ is monotonically decreasing. We prove the assertion for $f\in\msG$ and $\mathcal{O}_1=+$. The other assertions follow analogously. 

The value $\fnmap f(1) = \displaystyle{\frac{1}{m}\sum_{i=1}^{m}\fn f(i)}$ must be minimum because $f$ is monotonically increasing. The assertion about the maximum follows by contradiction.

We assume there were $y\in\mZ_n$ with $\fnmap f(y)>\fnmap f(n-m+1)$. According to the assumptions, $1\le n-m+1\le n$. The function $\fn f$ is periodic by Lemma~\refs{periodic_fn}. Therefore, we can distinguish two cases for $y$. These are $1\le y<n-m+1$ and $n-m+1<y\le n$. In the first case, we get
\[ \hspace*{19.7mm}\fnmap f(y) = \frac{1}{m}\sum_{i=0}^{m-1}\fn f(y+i) > \frac{1}{m}\sum_{i=0}^{m-1}\fn f(n-m+1+i), \]
in the second case
\begin{align*}
\hspace*{21mm}\fnmap f(y) &= \frac{1}{m}\sum_{i=0}^{m-1}\fn f(y+i) \\
&= \frac{1}{m}\left( \sum_{i=y}^{n}\fn f(i) + \sum_{i=n+1}^{y+m-1}\fn f(i) \right) \\
&= \frac{1}{m}\sum_{i=y}^{n}\fn f(i) + \frac{1}{m}\sum_{i=0}^{m-1-(n-y+1)}\fn f(1+i) \\
&> \frac{1}{m}\sum_{i=0}^{m-1}\fn f(n-m+1+i) \qquad\text{(by assumption)}
\end{align*}
\begin{align*}
&= \frac{1}{m}\sum_{i=n-m+1}^{n}\fn f(i) \\
&= \frac{1}{m}\left( \sum_{i=n-m+1}^{y-1}\fn f(i) + \sum_{i=y}^{n}\fn f(i) \right) \\
&= \frac{1}{m}\sum_{i=0}^{m-1-(n-y+1)}\fn f(n-m+1+i) + \frac{1}{m}\sum_{i=y}^{n}\fn f(i), \\
\frac{1}{m}\sum_{i=0}^{m-1-(n-y+1)}\fn f(1+i) &> \frac{1}{m}\sum_{i=0}^{m-1-(n-y+1)}\fn f(n-m+1+i).
\end{align*}
Both results contradict the monotonicity of $f$.
\end{proof}
\ \\[-3ex]

\begin{exam} \label{exampleC}
$n=17$ , $m=5$, $\frac{1}{1+\theta}\in\msT_0$.\ \\[-2ex]

The Chebyshev function $\theta(x) =\sum_{p\in\mP\le x} \log(p)$ is neither additive nor multi-\lb plicative. Due to $\theta(1)=0$, $h=\frac{1}{1+\theta}\in\msT_0$. The corresponding minima are reached\lb at $x=17-5+1=13$. We present the results for $h^{\Around{\,17\,}}$: \\
\begin{center}
\begin{tabular}{|rr|rrr|}
  \hline
  \rule{0pt}{18pt}
  $x$&\small{$h^{\Around{\,17\,}}(x)$}&
  \small{$h^{\Around{\,17\,}(5,+)}_{MA} (x)$}&
  \small{$h^{\Around{\,17\,}(5,\ldot)}_{MA} (x)$}&
  \small{$h^{\Around{\,17\,}(5,\rts)}_{MA} (x)$}\\[2pt]
  \hline
  \rule{0pt}{14pt}
 1&1.0000&\cl{0.5068}&\cl{0.4438}&\cl{0.3944}\\
 2&0.5906&\cw{0.3523}&\cw{0.3300}&\cw{0.3110}\\
 3&0.3582&\cw{0.2657}&\cw{0.2533}&\cw{0.2412}\\
 4&0.3582&\cw{0.2255}&\cw{0.2150}&\cw{0.2059}\\
 5&0.2272&\cw{0.1854}&\cw{0.1824}&\cw{0.1796}\\
 6&0.2272&\cw{0.1715}&\cw{0.1695}&\cw{0.1678}\\
 7&0.1576&\cw{0.1489}&\cw{0.1478}&\cw{0.1465}\\
 8&0.1576&\cw{0.1403}&\cw{0.1386}&\cw{0.1369}\\
 9&0.1576&\cw{0.1264}&\cw{0.1235}&\cw{0.1205}\\
10&0.1576&\cw{0.1126}&\cw{0.1100}&\cw{0.1076}\\
11&0.1144&\cw{0.0988}&\cw{0.0980}&\cw{0.0972}\\
12&0.1144&\cw{0.0936}&\cw{0.0931}&\cw{0.0926}\\
13&0.0884&\cp{0.0849}&\cp{0.0846}&\cp{0.0842}\\
14&0.0884&\cw{0.2672}&\cw{0.1373}&\cw{0.1019}\\
15&0.0884&\cw{0.3676}&\cw{0.2008}&\cw{0.1267}\\
16&0.0884&\cw{0.4216}&\cw{0.2656}&\cw{0.1616}\\
17&0.0707&\cw{0.4755}&\cw{0.3514}&\cw{0.2230}\\[2pt]
  \hline
\end{tabular}
\end{center}
\end{exam}
\pagebreak

This section is rounded off by the description of a general duality between minima and maxima of moving averages across different sequent lengths. It applies to all\lb non-negative arithmetic functions and also utilizes the periodicity of its extensions\lb to $\mZ$ according to Lemma~\refs{AF_extend}.
\ \\[-1ex]

\begin{lemm} \label{duality_extreme}
Let $n\in\mN>1$, $m\in\mN<n$, and $f:\mN\to\mR_{\ge0}$. \\
Furthermore, we define $x_0=\displaystyle{\argmax_{x\in\mZ_n}}\ \fnmap f(x)$ and $x_1=\displaystyle{\argmin_{x\in\mZ_n}}\ \fnmap f(x)$.

Then
\begin{align*} 
f^{\around{n}(n-m,+)}_{MA}(x_0+m) & =\displaystyle{\min_{x\in\mZ}}\ f^{\around{n}(n-m,+)}_{MA}(x) \quad\text{ and vice versa} \\
f^{\around{n}(n-m,+)}_{MA}(x_1+m) & =\displaystyle{\max_{x\in\mZ}}\ f^{\around{n}(n-m,+)}_{MA}(x).
\end{align*}
\end{lemm}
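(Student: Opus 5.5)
The argument rests on one identity. Let
\[ S=\sum_{i=1}^{n}\fn f(i) \]
be the sum of $\fn f$ over one full period. Then for every $x\in\mZ$
\[ m\cdot\fnmap f(x)+(n-m)\cdot f^{\around{n}(n-m,+)}_{MA}(x+m)=S. \]
The window $x,\dots,x+m-1$ of length $m$ and the window $x+m,\dots,x+n-1$ of length $n-m$ together cover $n$ consecutive integers. By Lemma~\refs{periodic_fn} these form a complete residue system $\modo n$, so the two sums of $\fn f$ add up to $S$. The hypothesis $m<n$ ensures $n-m\ge1$, so the second moving average is defined (Definition~\refs{moving_averages_f} with length $n-m$). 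I will write this identity out explicitly by splitting $\sum_{i=x}^{x+n-1}\fn f(i)$ at $x+m$, and use periodicity to replace it by $S$.

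Rearranging gives
\[ f^{\around{n}(n-m,+)}_{MA}(x+m)=\frac{S-m\cdot\fnmap f(x)}{n-m}. \]
This is a strictly decreasing affine function of $\fnmap f(x)$. Maximising $\fnmap f$ over $x$ therefore minimises the $(n-m)$-average at the shifted point $x+m$, and minimising it maximises the shifted average. The map $x\mapsto x+m$ is a bijection of $\mZ$. Moreover, by Lemma~\refs{periodic_fn_MA}, both moving averages are $n$-periodic. Hence the minimum or maximum over $x\in\mZ$ equals the one over $x\in\mZ_n$ (or over any complete residue system), and it is attained.

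The remaining step is to combine these facts. Take $x_0\in\mZ_n$ maximising $\fnmap f$. Let $y\in\mZ$ be arbitrary and write $y=x+m$. Then $\fnmap f(x)\le\fnmap f(x_0)$, since by periodicity the value at $x$ equals the value at a representative in $\mZ_n$. Consequently $f^{\around{n}(n-m,+)}_{MA}(y)\ge f^{\around{n}(n-m,+)}_{MA}(x_0+m)$, which proves the first equality. The statement for $x_1$ follows symmetrically.

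I do not expect a real obstacle here. The only care needed is in the bookkeeping: showing that $\mZ_n$-extrema equal $\mZ$-extrema via periodicity, and making sure the complementary window starts at exactly $x+m$. One could also note that if the argmax is not unique, any choice works, because the identity holds pointwise.
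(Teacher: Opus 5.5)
Your proof is correct and is essentially the paper's argument: the length-$m$ window at $x$ and the length-$(n-m)$ window at $x+m$ together cover a full period, so the $(n-m)$-average at $x+m$ is a strictly decreasing affine function of the $m$-average at $x$. Your normalization $\frac{1}{n-m}$ is the correct one; the paper's proof writes $\frac{1}{n}$ in front of the $(n-m)$-term sum, a slip that only rescales by the positive constant $\frac{n-m}{n}$ and so leaves the conclusion intact. You also spell out, via periodicity, the passage from extrema over $\mZ_n$ to extrema over $\mZ$, which the paper leaves implicit.
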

\begin{proof}
By assumption and Lemma~\refs{periodic_fn}, $\fn f$ is periodic with period $n$ and therefore
\[ f^{\around{n}(n,+)}_{MA}(x)=\displaystyle{\frac{1}{n}\sum_{i=0}^{n-1}}\fn f(x+i)
=\frac{1}{n}\displaystyle{\sum_{i=1}^{n}}\fn f(i)=const\underset{Df}{=}y. \]

For all $x\in\mZ$, we have
\begin{align*} 
f^{\around{n}(n-m,+)}_{MA}(x+m) &= \frac{1}{n}\sum_{i=0}^{n-m-1} \fn f(x+m+i) \\
&= \frac{1}{n}\sum_{i=0}^{n-1} \fn f(x+m+i) - \frac{1}{n}\sum_{i=n-m}^{n-1} \fn f(x+m+i) \\
&= y - \frac{1}{n}\sum_{i=n-m-n+m}^{n-1-n+m} \fn f(x+m+i+n-m) \\
&= y - \frac{m}{n}\mdot\frac{1}{m}\sum_{i=0}^{m-1} \fn f(x+n+i) = y - \frac{m}{n}\mdot\frac{1}{m}\sum_{i=0}^{m-1} \fn f(x+i) \\
&= y - \frac{m}{n}\mdot\fnmap f(x).
\end{align*}
Thus, $f^{\around{n}(n-m,+)}_{MA}(x+m)$ is minimum if $\fnmap f(x)$ is maximum and vice versa.
\end{proof}
\ \\[-4ex]
\begin{rema}
This lemma is even more general. For positive functions, this applies\lb accordingly to geometric and harmonic moving averages.

For monotonic functions, it is a consequence of the previous lemma. The same result would be obtained if $m$ in Lemma~\refs{monotonic_extreme} were replaced by $n-m$.
\end{rema}
\pagebreak

\begin{exam}
$n=8$ , $m=3$, $\varphi\in\msM$, $n-m=8-3=5$. \\[-2ex]

Euler's totient function $\varphi$ counts the natural numbers up to a given integer $n$ that are coprime to $n$. In this case, there are two maxima at $x_{0_1}=5$ and $x_{0_2}=6$ and one minimum at $x_1=1$. Therefore,
\begin{align*} 
x_{0_1}+m&\quad=\quad5+3\hspace{11.7mm}=\quad 8, \\
x_{0_2}+m&\quad=\quad6+3=9 \quad\congr\quad \,1 \modu {8}, \\
x_1+m&\quad=\quad1+3\hspace{11.7mm}=\quad 4.
\end{align*}
The following table lists the corresponding results.
 \ \\[-1ex]
 
\begin{center}
\begingroup
\setlength{\tabcolsep}{5.5pt}
\begin{tabular}{|rr|rrr|rrr|}
  \hline
  \rule{0pt}{18pt}$x$&\small{$\varphi^{\Around{\,8\,}}(x)$}&
  \small{$\varphi^{\Around{\,8\,}(3,+)}_{MA} (x)$}\ &
  \small{$\varphi^{\Around{\,8\,}(3,\ldot)}_{MA} (x)$}\ &
  \small{$\varphi^{\Around{\,8\,}(3,\rts)}_{MA} (x)$}\ &
  \small{$\varphi^{\Around{\,8\,}(5,+)}_{MA} (x)$}\ &
  \small{$\varphi^{\Around{\,8\,}(5,\ldot)}_{MA} (x)$}\ &
  \small{$\varphi^{\Around{\,8\,}(5,\rts)}_{MA} (x)$}\\[2pt]
  \hline
  \rule{0pt}{14pt}
1&1&\cp{1.3333}&\cp{1.2599}&\cp{1.2000}&\cp{2.0000}&\cp{1.7411}&\cp{1.5385}\\
2&1&\cw{1.6667}&\cw{1.5874}&\cw{1.5000}&\cw{2.2000}&\cw{2.0000}&\cw{1.8182}\\
3&2&\cw{2.6667}&\cw{2.5198}&\cw{2.4000}&\cw{3.2000}&\cw{2.8619}&\cw{2.6087}\\
4&2&\cw{2.6667}&\cw{2.5198}&\cw{2.4000}&\cl{3.6000}&\cl{3.2875}&\cl{3.0000}\\
5&4&\cl{4.0000}&\cl{3.6342}&\cl{3.2727}&\cw{3.4000}&\cw{2.8619}&\cw{2.3077}\\
6&2&\cl{4.0000}&\cl{3.6342}&\cl{3.2727}&\cw{2.8000}&\cw{2.1689}&\cw{1.7143}\\
7&6&\cw{3.6667}&\cw{2.8845}&\cw{2.1176}&\cw{2.8000}&\cw{2.1689}&\cw{1.7143}\\
8&4&\cw{2.0000}&\cw{1.5874}&\cw{1.3333}&\cp{2.0000}&\cp{1.7411}&\cp{1.5385}\\[2pt]
  \hline
\end{tabular}
\endgroup
\end{center}
\end{exam}

\vspace*{8.5cm}


\subsubsection*{Contact}
marioziller@arcor.de
\pagebreak
\bibliography{References}     
\addcontentsline{toc}{section}{\phj References\phj}
\end{document}